\newtheorem{thm}{Theorem}[section]
\newtheorem{lem}[thm]{Lemma}
\newtheorem{prop}[thm]{Proposition}
\newtheorem{cor}[thm]{Corollary}
\theoremstyle{definition}
\newtheorem{de}[thm]{Definition}
\theoremstyle{remark}
\newtheorem{rem}[thm]{Remark}
\numberwithin{equation}{section}
\def \N {\mathbb N}
\def \C {\mathbb C}
\def \Z {\mathbb Z}
\def \R {\mathbb R}
\def \A {\mathbb{A}}
\def \E {\mathbb{E}}
\def \B {\mathcal{B}}
\def \I {\mathcal{I}}
\def \X {\mathcal{X}}
\def \Y {\mathcal{Y}}
\def \ZZ {\mathcal{Z}}
\def \O {\mathcal{O}}
\def \RP {{\bf RP}}
\def \id {{\rm id}}
\def \h {\hat }
\def \HK {\interleave}
\def \ep {\epsilon}
\def \d {\delta}
\def \D {\Delta}
\def \lra{\longrightarrow}
\begin{document}
\title[pointwise convergence of multiple ergodic
averages]{pointwise convergence of multiple ergodic
averages and strictly ergodic models}

\author{Wen Huang}
\author{Song Shao}
\author{Xiangdong Ye}

\address{Wu Wen-Tsun Key Laboratory of Mathematics, USTC, Chinese Academy of Sciences and
School of Mathematics, University of Science and Technology of China,
Hefei, Anhui, 230026, P.R. China.}

\email{wenh@mail.ustc.edu.cn}\email{songshao@ustc.edu.cn}
\email{yexd@ustc.edu.cn}

\subjclass[2010]{Primary: 37A05, 37B05} \keywords{pointwise convergence, ergodic averages, distal systems}

\thanks{Authors are supported by NNSF of China (11225105, 11371339, 11431012, 11571335) and by ``the Fundamental Research Funds for the Central Universities''.}


\begin{abstract}
By building some suitable strictly ergodic models, we prove that for an
ergodic system $(X,\X,\mu, T)$, $d\in\N$, $f_1, \ldots, f_d \in
L^{\infty}(\mu)$, the averages
\begin{equation*}
    \frac{1}{N^2} \sum_{(n,m)\in [0,N-1]^2}
    f_1(T^nx)f_2(T^{n+m}x)\ldots f_d(T^{n+(d-1)m}x)
\end{equation*}
converge to a constant $\mu$ a.e.

Deriving some results from the construction, for distal systems we answer positively the question if the multiple ergodic averages converge
a.e. That is, we show that if $(X,\X,\mu, T)$ is an ergodic distal system,
and $f_1, \ldots, f_d \in L^{\infty}(\mu)$, then the multiple ergodic averages
\begin{equation*}
    \frac 1 N\sum_{n=0}^{N-1}f_1(T^nx)\ldots f_d(T^{dn}x)
\end{equation*}
converge $\mu$ a.e..

\end{abstract}

\maketitle





\section{Introduction}

\subsection{Main results}

Throughout this paper, by a {\it topological dynamical
system} (t.d.s. for short) we mean a pair $(X, T)$, where $X$ is a
compact metric space  and $T$ is a homeomorphism from $X$ to itself.
A {\em measurable system} (m.p.t. for short) is a quadruple $(X,\X,
\mu, T)$, where $(X,\X,\mu )$ is a Lebesgue probability space and $T
: X \rightarrow X$ is an invertible measure preserving
transformation.

\medskip

Let $(X,\X, \mu, T)$ be an ergodic m.p.t. We say that  $(\h{X}, \hat{T})$
is a {\em topological model} (or just a {\em model}) for $(X,\X,
\mu, T)$ if $(\h{X}, \h{T})$ is a t.d.s. and there exists an invariant
probability measure $\h{\mu}$ on the Borel $\sigma$-algebra
$\mathcal{B}(\h{X})$ such that the systems $(X,\X, \mu, T)$ and
$(\h{X}, \mathcal{B}(\h{X}), \h{\mu}, \h{T})$ are measure theoretically
isomorphic.

\medskip

The well-known Jewett-Krieger's theorem  \cite{Jewett, Krieger}
states that every ergodic system has a strictly ergodic model. We
note that one can add some additional properties to the topological
model. For example, in \cite{Lehrer} Lehrer showed that the strictly
ergodic model can be required to be a topological (strongly) mixing
system in addition.

\medskip

Now let ${\tau}_d(\h{T})=\h{T}\times \ldots\times \h{T} \ (d \ \text{times})$ and
${\sigma}_d(\h{T}) =\h{T}\times \h{T}^2\times  \ldots \times \h{T}^{d}$. The group generated by
${\tau}_d(\h{T})$ and ${\sigma}_d(\h{T})$ is denoted $\langle {\tau}_d(\h{T}), {\sigma}_d(\h{T})\rangle$.
For any $x\in \h{X}$, let $N_d(\h{X},x)=\overline{\O((x,\ldots,x),
\langle {\tau}_d(\h{T}), {\sigma}_d(\h{T})\rangle)}$, the orbit closure of
$(x,\ldots,x)$ ($d$ times) under the action of the group
$\langle {\tau}_d(\h{T}), {\sigma}_d(\h{T})\rangle$. We remark that if $(\h{X},\h{T})$ is
minimal, then all $N_d(\h{X},x)$ coincide, which will be denoted by
$N_d(\h{X})$. It was shown by Glasner \cite{G94} that if $(\h{X},\h{T})$
is minimal, then $(N_d(\h{X}), \langle {\tau}_d(\h{T}), {\sigma}_d(\h{T})\rangle)$ is
minimal.

\medskip

In this paper, first we will show the following theorem.

\medskip

\noindent {\bf Theorem A:} {\em Let $(X,\X, \mu, T)$ be an ergodic
m.p.t. and $d\in\N$. Then it has a strictly ergodic model $(\h{X}, \h{T})$ such that
$(N_d(\h{X}), \langle {\tau}_d(\h{T}), {\sigma}_d(\h{T})\rangle)$ is strictly ergodic.
}

\medskip

As a consequence, we have:

\medskip

\noindent {\bf Theorem B:} {\em Let $(X,\X,\mu, T)$ be an ergodic
m.p.t. and $d\in\N$. Then for $f_1, \ldots, f_d \in L^{\infty}(\mu)$
the averages
\begin{equation}\label{D}
    \frac{1}{N^2} \sum_{(n,m)\in [0,N-1]^2}
    f_1(T^nx)f_2(T^{n+m}x)\ldots f_d(T^{n+(d-1)m}x)
\end{equation}
converge to  a constant $\mu$ a.e.}

\medskip

We remark that similar theorems as Theorems A and B can be established for cubes (\cite{HSY-17}).
Moreover, the convergence in Theorem B can be stated for any tempered F{\rm ${\o}$}lner sequence
$\{F_N\}_{N\ge 1}$ of $\Z^2$ instead of $[0,N-1]^2$.

\medskip

It is a long open question if the multiple ergodic averages $\frac 1 N\sum_{n=0}^{N-1}f_1(T^nx)\ldots f_d(T^{dn}x)$ converge a.e.
Using some results developed when proving Theorem A, we answer the question positively for distal systems.
Namely, we have

\medskip

\noindent {\bf Theorem C:} {\em Let $(X,\X,\mu, T)$ be an ergodic distal system,
and $d\in \N$. Then for all $f_1, \ldots, f_d \in L^{\infty}(\mu)$
\begin{equation*}
    \frac 1 N\sum_{n=0}^{N-1}f_1(T^nx)\ldots f_d(T^{dn}x)
\end{equation*}
converge $\mu$ a.e.}

\medskip

Note that Furstenberg's structure theorem \cite{F} states that each ergodic system is a weakly mixing extension of an ergodic distal system. Thus, by Theorem C the open question is reduced to deal with the weakly mixing extensions.

To prove Theorem C, we show the following result, which is of independent interest.

\medskip

\noindent {\bf Theorem D:} {
Let $(X,\X,\mu,T)$ be an ergodic system and $d\in \N$. Then there exists a family $\{\mu^{(d)}_x\}_{x\in X}$ of probability measures on $X^d$ such that
\begin{enumerate}
  \item for $\mu$ a.e. $x\in X$, $\mu^{(d)}_x$ is ergodic under $T\times T^2\times \ldots \times T^d$,

  \item for all $f_1,\ldots, f_d\in L^\infty(\mu)$.
  \begin{equation}\label{ss}
\begin{split}
\frac{1}{N} \sum_{n=0}^{N-1}
   &f_1(T^nx)f_2(T^{2n}x)\ldots f_d(T^{dn}x)\\
   \longrightarrow & \int_{X^d} f_1(x_1)f_2(x_2)\ldots f_{d}(x_{d})\
   d \mu^{(d)}_x(x_1,x_2,\ldots,x_{d})
\end{split}
\end{equation}
as $N\to \infty$, where convergence is in $L^2(\mu)$.

 \item  for $\mu$ a.e. $x\in X$, $(p_j)_*(\mu_x^{(d)})\ll \mu$ for $1\le j\le d$, where $p_j: X^d\rightarrow X$ is the projection to the $j$-th coordinate.
\end{enumerate}

}

\medskip

We note that the idea which we used in this paper to show pointwise convergence theorems can be applied to other situations
(together with other tools), see for example \cite{DS-2}.
Moreover, we have the following conjecture.

\medskip
\noindent{\bf Conjecture:} {\em Let $(X,\X,\mu, T)$ be an ergodic system, then it has a topological model $(\h{X}, \h{T})$
such that for a.e. $x\in \h{X}$, $(x, \ldots,x)$ is a generic point of some ergodic measures $\mu_x^{(d)}$ invariant under
$\h{T}\times \ldots \times \h{T}^d$.}

\medskip

We also conjecture that the measures $\mu_x^{(d)}$ are the ones defined in Theorem D.
Once the conjecture is proven then the multiple ergodic averages converge a.e.
by a similar argument that we used to prove Theorem B.
\subsection{Backgrounds}

In this subsection we will give backgrounds of our research.

\subsubsection{Ergodic averages}

In this subsection we recall some
results related to pointwise ergodic averages.

The first pointwise ergodic theorem was proved by Birkhoff in 1931.
Followed from Furstenberg's beautiful work on the dynamical proof of Szemer\'{e}di's theorem in 1977, problems concerning the convergence of
multiple ergodic averages (in $L^2$ or pointwisely) attracts a lot of attention.

\medskip

The convergence of the averages
\begin{equation}\label{multiple1}
    \frac 1 N\sum_{n=0}^{N-1}f_1(T^nx)\ldots
f_d(T^{dn}x)
\end{equation}
in $L^2$ norm was established by Host and Kra \cite[Theorem 1.1]{HK05} (see also Ziegler \cite{Z}). We note that in their
proofs, the characteristic factors play a great role. The convergence of the multiple ergodic average for commuting
transformations was obtained by Tao \cite{Tao} using the finitary ergodic method, see \cite{Austin,H} for
more traditional ergodic proofs by Austin and Host respectively. Recently, the convergence of multiple ergodic averages for nilpotent group actions
was proved by Walsh \cite{Walsh}.


\medskip

The first breakthrough on pointwise convergence of (\ref{multiple1}) for $d > 1$ is due to Bourgain,
who showed in \cite{B90} that for $d = 2$, for $p,q\in \N$ and for
all $f_1, f_2 \in  L^\infty$,
the limit of $\frac{1}{N}\sum_{n=0}^{N-1} f_1(T^{pn}x)f_2(T^{qn}x)$ exists a.e. 
Before Bourgain's work, Lesigne showed this convergence holds if the
system is distal, with $T^p$, $T^q$ and $T^{p-q}$ ergodic \cite{L87}. Also in \cite{DL,Assani98}, it was shown
that the problem of the almost everywhere convergence of (\ref{multiple1}) can be deduced
to the case when the m.p.t. has zero entropy.
One can also find some results dealing with
weakly mixing transformations in \cite{Assani98}.


\medskip

Recently there are some results on the limiting behavior of the averages along cubes,
and we refer to \cite{Bergelson00,HK05,Assani,CF} for details.
Also in \cite{CF}, Chu and Frantzikinakis obtained the following result.
For $i=1,2,\ldots,d$, let $T_i: X \rightarrow X$ be m.p.t., $f_i\in L^\infty(\mu)$ be functions,
$p_i\in \mathbb{Z}[t]$ be non-constant polynomials such that $p_i-p_j$ is non-constant for $i\neq j$,
and $b: \mathbb{N} \rightarrow \mathbb{N}$ be a sequence such that $b(N)\to \infty$ and $b(N)/N^{1/h}\to 0$
as $N\to \infty$, where $h$ is the maximum degree of the polynomials $p_i$. Then the averages
$$\frac{1}{Nb(N)}\sum_{1\le m\le N, 1\le n\le b(N)} f_1(T_1^{m+p_1(n)}x)\ldots f_d (T_d^{m+p_d(n)} x) $$
converge pointwise as $N\to \infty$.


\subsubsection{Topological model}

The pioneering work on topological model was done by Jewett in
\cite{Jewett}. He proved the theorem under the additional assumption
that $T$ is weakly mixing,  and the general case was proved by Krieger in \cite{Krieger} soon.
The papers of Hansel and Raoult \cite{HanRao}, Bellow and Furstenberg \cite{BeF}
and Denker \cite{Denker}, gave different proofs of the theorem in
the general ergodic case (see also \cite{DCS}).
One can add some additional properties to the topological model. For example, in \cite{Lehrer}
Lehrer showed that the strictly ergodic model can be required as a
topological (strongly) mixing system in addition. Our Theorem A strengthens Jewett-Krieger Theorem
in other direction, i.e. we can require the model to be strictly ergodic under some group actions
on some subsets of the product space.

It is well known that each m.p.t. has a topological model
\cite{F}. There are universal models, models for some group actions and models for some special classes.
Weiss \cite{Weiss89} showed the following nice result:
there exists a minimal t.d.s. $(X, T)$ with the property that for
every aperiodic ergodic m.p.t. $(Y,\Y,\nu, S)$ there exists a
$T$-invariant Borel probability measure $\mu$ on $X$ such that the
systems $(Y,\Y,\nu, S)$ and $(X,\B(X),\mu, T)$ are measure
theoretically isomorphic.
Weiss \cite{Weiss85} (see also \cite{Weiss00, Glasner, GW06}) showed that Jewett-Krieger Theorem can be
generalized from $\Z$-actions to commutative group actions.
An ergodic system has a doubly
minimal model if and only if it has zero entropy \cite{Weiss95}
(other topological models for zero entropy systems can be found in
\cite{HM, DownL}); and an ergodic system has a strictly ergodic, UPE
(uniform positive entropy) model if and only if it has positive
entropy \cite{GW94}.

Note that not any dynamical properties can be added in the uniquely ergodic models. For example,
Lindernstrauss showed that every ergodic measure distal system $(X,\X,\mu,T )$ has a minimal
topologically distal model \cite{Lindenstrauss}.
This topological model needs not, in general, to be uniquely ergodic. In other words there
are measurable distal systems for which no uniquely ergodic topologically distal models
exist \cite{Lindenstrauss}. We refer to \cite{GW06} for more information on the topic.

\medskip

We say that $\h{\pi}: \h{X}\rightarrow \h{Y}$ is a {\em topological
model} for a factor map $\pi: (X,\X, \mu, T)\rightarrow (Y,\Y, \nu, S)$ if
$\h{\pi}$ is a topological factor map and there exist measure
theoretical isomorphisms $\phi$ and $\psi$ such that the diagram
\[
\begin{CD}
X @>{\phi}>> \h{X}\\
@V{\pi}VV      @VV{\h{\pi}}V\\
Y @>{\psi }>> \h{Y}
\end{CD}
\]
is commutative, i.e. $\h{\pi}\phi=\psi\pi$. Weiss \cite{Weiss85}
generalized Jewett-Krieger Theorem to the relative case.
Namely, he proved that if $\pi: (X,\X, \mu, T)\rightarrow (Y,\Y, \nu,
S)$ is a factor map with $(X,\X, \mu, T)$ ergodic and
$(\h{Y},\h{\Y}, \h{\nu}, \h{S})$ is a uniquely ergodic model for $(Y,\Y,
\nu, T)$, then there is a uniquely ergodic model $(\h{X}, \h{\X},
\h{\mu}, \h{T})$ for $(X,\X, \mu, T)$ and a factor map $\h{\pi}:
\h{X}\rightarrow \h{Y}$ which is a model for $\pi: X\rightarrow Y$.
We will refer this theorem as {\it Weiss's Theorem}.
We note that in \cite{Weiss85} Weiss pointed that the relative case holds
for commutative group actions.

\subsection{Main ideas of the proofs}

Now we describe the main ideas and ingredients in the proofs.

To prove Theorem A 
the first fact we face is that for an ergodic m.p.t. $(X,\X, \mu, T)$,
not every strictly ergodic model is the one we need in Theorem A
\footnote{Take any weakly mixing strictly ergodic model $(X,T,\mu)$ of an m.d.s.
with discrete spectrum, then $(N_d({X}), \langle{\tau}_d, {\sigma}_d\rangle)$ is not
strictly ergodic under $\langle{\tau}_d, {\sigma}_d\rangle$ when $d\ge 3$, in fact
in this case $N_d({X})=X^d$ and the invariant measures
$\mu\times \ldots \times \mu\not = \mu^{(d)}$, where $\mu^{(d)}$ is defined in \cite{F77}}.
This indicates that
to obtain Theorem A, Jewett-Krieger Theorem is not enough for our purpose. Fortunately, we find that
Weiss's Theorem is a right tool.

Precisely, for $d\ge 3$ let $\pi_{d-2}: X\rightarrow Z_{d-2}$ be the factor map from
$X$ to its $d-2$-step nilfactor $Z_{d-2}$. By the results of Host-Kra-Maass in \cite{HKM}, $Z_{d-2}$ may be
regarded as a topological system in the natural way. Using Weiss's Theorem
there is a uniquely ergodic model $(\h{X}, \h{\X},
\h{\mu}, \h{T})$ for $(X,\X, \mu, T)$ and a factor map $\h{\pi}_{d-2}:
\h{X}\rightarrow Z_{d-2}$ which is a model for $\pi_{d-2}: X\rightarrow
Z_{d-2}$.
\[
\begin{CD}
X @>{\phi}>> \h{X}\\
@V{\pi_{d-2}}VV      @VV{\h{\pi}_{d-2}}V\\
Z_{d-2} @>{ }>> Z_{d-2}
\end{CD}
\]

We then show  that $(\h{X},\h{T})$ is what we need. To this aim,
we need to understand well the ergodic decomposition of $d$-fold self-joinings of $X$. We first study
the $\sigma$-algebra of $\sigma_d$-invariant sets under $\sigma_d$, and show that we always can deduce
this $\sigma$-algebra to the one on its nilfactors. Then via studying nilsystems,
we get the ergodic decomposition of Furstenberg self-joinings under the action $ \sigma_d$.
This is the main tool we develop to prove Theorem A.


Once Theorem A  is proven, Theorem B will follow by an argument using some
well known theorems related to pointwise convergence for $\Z^d$ actions
(see for example \cite{Lin} by Lindenstrauss) and for uniquely ergodic systems.




Let $(X,\X,\mu,T)$ be an ergodic distal system. Then $\pi_{d-1}: X\rightarrow Z_{d-1}$ is
a distal extension. By Furstenberg's Structure Theorem, $\pi_{d-1}$ is decomposed into
isometric extensions and inverse limit. We show that the property of almost everywhere
convergence of the multiple ergodic averages (\ref{multiple1}) is preserved by these
isometric extensions, and then we conclude Theorem C. This argument is inspired by Lesigne's work in \cite{L87}.

\subsection{Organization of the paper}

We organize the paper as follows. In Section \ref{section-pre} we introduce some basic notions and
results needed in the paper. In Section \ref{section-furstenberg-joining}
we study the ergodic decomposition of self-joinings under $T\times T^2\times \ldots \times T^d$.
Then in Sections \ref{section-thA}
and \ref{section-thC}, we prove Theorems A, B, C and D respectively.

\bigskip

\noindent{\bf Acknowledgments:} 
We thank the referee for the very careful reading
and many useful comments, which help us to improve the writing of the paper and
simplify some proofs. In particular, the comments help us to rewrite Proposition \ref{Zd-erg-decom}, and obtain Corollary \ref{coro3.3} and Corollary \ref{cor4.3}
which simplify the proof of Theorem A.

\section{Preliminaries}\label{section-pre}

In this section we introduce some basic notions in ergodic theory
and topological dynamics.
In this paper, instead of just considering a single transformation $T$,
we will consider commuting
transformations $T_1$, $\ldots$ , $T_k$ of $X$. We only recall some basic
definitions and properties of systems for
one transformation. Extensions to the general case are
straightforward.

\subsection{Ergodic theory and topological dynamics}

\subsubsection{Measurable systems}



For a m.p.t. $(X,\X, \mu, T)$ we write $\I(X,\X,\mu, T)$ for the $\sigma$-algebra
$\{A\in \X : T^{-1}A = A\}$ of invariant sets. Sometimes we will use $\I$ or $\I (T)$ for short.
A m.p.t. is {\em ergodic} if all
the $T$-invariant sets have measure either $0$ or $1$. $(X,\X, \mu,
T)$ is {\em weakly mixing} if the product system $(X\times X,
\X\times \X, \mu\times \mu, T\times T)$ is erdogic.

\medskip

A {\em homomorphism} from m.p.t. $(X,\X, \mu, T)$ to $(Y,\Y, \nu,
S)$ is a measurable map $\pi : X_0 \rightarrow  Y_0$, where $X_0$ is
a $T$-invariant subset of $X$ and $Y_0$ is an $S$-invariant subset
of $Y$, both of full measure, such that $\pi_*\mu=\mu\circ
\pi^{-1}=\nu$ and $S\circ \pi(x)=\pi\circ T(x)$ for $x\in X_0$. When
we have such a homomorphism we say that $(Y,\Y, \nu, S)$
is a {\em factor} of  $(X,\X, \mu , T)$. If the factor map
$\pi: X_0\rightarrow  Y_0$ can be chosen to be bijective, then we
say that $(X,\X, \mu, T)$ and $(Y,\Y, \nu, S)$ are {\em
(measure theoretically) isomorphic} (bijective maps on Lebesgue
spaces have measurable inverses). A factor can be characterized
(modulo isomorphism) by $\pi^{-1}(\Y)$, which is a $T$-invariant
sub- $\sigma$-algebra of $\X$, and conversely any $T$-invariant
sub-$\sigma$-algebra of $\X$ defines a factor. By a classical result abuse
of terminology we denote by the same letter the $\sigma$-algebra
$\Y$ and its inverse image by $\pi$. In other words, if $(Y,\Y, \nu,
S)$ is a factor of $(X,\X, \mu, T)$, we think of $\Y$ as a
sub-$\sigma$-algebra of $\X$.

We say that $(X,\X, \mu, T)$ is an {\em inverse limit} of a sequence
of factors $(X,\X_j ,\mu, T)$ if $(\X_j)_{j\in\N}$ is an increasing
sequence of $T$-invariant sub-$\sigma$-algebras such that
$\bigvee_{j\in \N}\X_j=\X$ up to sets of measure zero.

\subsubsection{Topological dynamical systems}


A t.d.s. $(X, T)$ is {\em transitive} if there exists
some point $x\in X$ whose orbit $\O(x,T)=\{T^nx: n\in \Z\}$ is dense
in $X$ and we call such a point a {\em transitive point}. The system
is {\em minimal} if the orbit of any point is dense in $X$. This
property is equivalent to saying that X and the empty set are the
only closed invariant sets in $X$.
\medskip

A {\em factor} of a t.d.s. $(X, T)$ is another t.d.s. $(Y, S)$ such that there exists a continuous and
onto map $\phi: X \rightarrow Y$ satisfying $S\circ \phi = \phi\circ
T$. In this case, $(X,T)$ is called an {\em extension } of $(Y,S)$.
The map $\phi$ is called a {\em factor map}.


\subsubsection{$M(X)$ and $M_T(X)$}

For a t.d.s. $(X,T)$, denote by $M(X)$ the set of all
probability measure on $X$. Let $M_T(X)=\{\mu\in M(X):
T_*\mu=\mu\circ T^{-1}=\mu\}$ be the set of all $T$-invariant
Borel probability measures of $X$ and $M_T^e(X)$ be the set of ergodic elements of $M_T(X)$.
It is well known that $M_T^e(X)\neq \emptyset$.

A t.d.s. $(X,T)$ is called {\em uniquely ergodic} if
there is a unique $T$-invariant probability measure on $X$. It is
called {\em strictly ergodic} if it is uniquely ergodic and minimal.

\subsubsection{Topological distal systems}

A t.d.s. $(X,T)$ (with metric $\rho$) is
called {\em topologically distal} if $\inf_{n\in \Z} \rho (T^nx,T^nx')>0$ whenever $x,x'\in X$ are distinct.

\subsection{Conditional expectation}

If $\Y$ is a $T$-invariant sub-$\sigma$-algebra of $\X$ and $f\in
L^1(\mu)$, we write $\E(f|\Y)$, or $\E_\mu(f|\Y)$ if needed, for the
{\em conditional expectation} of $f$ with respect to $\Y$. The
conditional expectation $\E(f|\Y)$ is characterized as the unique
$\Y$-measurable function in $L^2(Y,\Y, \nu)$ such that
\begin{equation}
    \int_Y g \E(f|\Y)d\nu = \int_X  g\circ \pi f d\mu
\end{equation}
for all $g\in L^2(Y,\Y, \nu)$. We will frequently make use of the
identities $$\int \E(f|\Y) \ d\mu = \int f \ d\mu \quad
\text{and}\quad T \E(f|\Y) = \E(Tf|\Y).$$ We say that a function $f$
is {\em orthogonal} to $\Y$, and we write $f \perp \Y$, when it has a zero
conditional expectation on $\Y$. If a function $f\in L^1(\mu)$ is
measurable with respect to the factor $\Y$, we write $f \in L^1(Y,
\Y, \nu)$.

\medskip

The disintegration of $\mu$ over $\nu$, written as $\mu=\int\mu_y\ d\ \nu(y)$,
is given by a measurable map
$y \mapsto \mu_y$ from $Y$ to the space of probability measures on
$X$  such  that
\begin{equation}
    \E(f|\Y)(y)=\int_X f d\mu_y
\end{equation}
$\nu$-almost everywhere.

\subsection{Joining}

\subsubsection{Joining and conditional product measure}

The notions of joining and conditional product measure are introduced by Furstenberg in \cite{F77}.
Let $(X_i, \mu_i,T_i), i = 1,\ldots, k$, be m.p.t, and let $(Y_i,\nu_i,S_i)$ be corresponding factors, and
$\pi_i:X_i\rightarrow Y_i$ the factor maps. A measure $\nu$ on
$Y=\prod_i Y_i$ defines a {\em joining} of the measures on $Y_i$ if
it is invariant under $S_1\times \ldots \times S_k$ and maps onto
$\nu_j$ under the natural map $\prod_i Y_i\rightarrow Y_j$. When $S_1=\ldots=S_k$, we then say that
$\nu$ is a $k$-fold self-joining.

Let $\nu$ be a joining of the measures on $Y_i, i=1,\ldots, k$, and
let $\mu_i=\int \mu_{X_i, y_i}\ d\nu_i(y_i)$ represent the
disintegration of $\mu_i$ with respect to $\nu_i$. Let $\mu$ be a
measure on $X=\prod_i X_i$ defined by
\begin{equation}\label{}
    \mu=\int_Y \mu_{X_1,y_1}\times \mu_{X_2,y_2}\times \ldots \times
    \mu_{X_k,y_k}\ d\nu(y_1,y_2,\ldots,y_k).
\end{equation}
Then $\mu$ is called the {\em conditional product measure with
respect to $\nu$}.

Equivalently, $\mu$ is conditional product measure relative to $\nu$
if and only if for all $k$-tuple $f_i\in L^\infty(X_i,\mu_i),
i=1,\ldots, k$
\begin{equation}
\begin{split}
\int_X
   &f_1(x_1)f_2(x_2)\ldots f_k(x_k)\ d\mu(x_1,x_2,\ldots,x_k)\\
   = & \int_Y \E(f_1|\Y_1)(y_1)\E(f_2|\Y_2)(y_2)\ldots \E(f_{k}|\Y_k)(y_{k})\
   d\nu (y_1,y_2,\ldots,y_{k}).
\end{split}
\end{equation}

\subsubsection{Relatively independent joining}

Let $(X_1,\X_1,\mu_1,T_1), (X_2,\X_2,\mu_2,T_1)$ be two systems and let
$(Y,\Y,\nu, S)$ be a common factor with $\pi_i: X_i\rightarrow Y$ for
$i = 1, 2$ the factor maps. Let $\mu_i=\int \mu_{i,y}\ d\nu(y)$
represent the disintegration of $\mu_i$ with respect to $Y$. Let
$\mu_1\times_{\Y} \mu_2$ denote the measure defined by
$$\mu_1\times_{\Y} \mu_2(A)=\int_Y \mu_{1,y}\times \mu_{2,y}\ d\nu(y),$$
for all $A\in \X_1\times \X_2$. The system $(X_1\times X_2,
\X_1\times \X_2,\mu_1\times_Y \mu_2, T_1\times T_2)$ is called the
{\em relative product} of $X_1$ and $X_2$ with respect to $Y$ and is
denoted $X_1\times_{\Y} X_2$. $\mu_1\times_{\Y} \mu_2 $ is also
called {\em relatively independent joining} of $X_1$ and $X_2$ over
$Y$.

\subsection{HK-seminorms}\
\medskip

When $f_i$, $i\in I$, are functions on the set $X$, we define a function
$\bigotimes_{i\in I} f_i$ on $X^I$ by
\begin{equation*}
    \bigotimes_{i\in I} f_i ({\bf x})=\prod_{i\in I}
    f_i(x_i),
\end{equation*}
where ${\bf x}=(x_i)\in X^I$.

\subsubsection{}
Let $(X,\X, \mu,T)$ be an ergodic system and $k\in \N$. We define a measure $\mu^{[k]}$ on $X^{2^k}$ invariant under $T^{[k]}=T\times T\times \ldots \times T$ ($2^k$ times), by
\begin{equation*}
  \mu^{[1]}=\mu\mathop{\times}_{\I(T)}\mu=\mu\times \mu;
\end{equation*}
for $k\ge 1$,
\begin{equation*}
 \mu^{[k+1]}=\mu^{[k]}\mathop{\times}_{\I ( T^{[k]} ) } \mu^{[k]}.
\end{equation*}
Write ${\bf x}=(x_0,x_1,\ldots,x_{2^k-1})$ for a point of $X^{2^k}$, we define a seminorm $\HK \cdot \HK_k$ on $L^\infty(\mu)$ by
\begin{equation}\label{}
    \HK f\HK_k=\Big( \int_{X^{2^k}} \bigotimes_{i\in \{0,1,\ldots, 2^k-1\}}
f( {\bf x})d\mu^{[k]}({\bf x})\Big)^{1/2^k}=\Big( \int_{X^{2^k}} \prod_{i=0}^{2^k-1}
f(x_i)d\mu^{[k]}({\bf x})\Big)^{1/2^k}.
\end{equation}
That $\HK \cdot \HK_k$  is a seminorm \footnote{Here for simplicity we give the formula for real functions, and one can give the formula for complex functions similarly.} can be proved as in \cite{HK05}, and we call it
{\em Host-Kra seminorm} (HK seminorm for short).

As $X$ is assumed to be ergodic, the $\sigma$-algebra $\I^{[0]}$ is
trivial and $\mu^{[1]}=\mu \times \mu$. We therefore have
$$\HK f\HK_1=\Big(\int_{X^2}f(x_0){f(x_1)}d\mu \times\mu(x_0,x_1)\Big)^{1/2}
=\Big|\int fd\mu\Big|.$$

It is showed in \cite{HK05} that for all $f_i\in L^\infty(\mu), i\in \{0,1,\ldots,2^k-1\}$,
$$\Big|\int \bigotimes_{i\in \{0,1,\ldots, 2^k-1\}}f_i d\mu^{[k]}\Big|\le \prod_{i=0}^{2^k-1}\HK f_i\HK_k.$$
The following lemma follows immediately from the definition of the
measures and the Ergodic Theorem.

\begin{lem}\label{lemmaE1}
For every integer $k\ge 0$ and every $f\in L^\infty(\mu)$, one has
\begin{equation}\label{lem2.1}
    \HK f\HK_{k+1}=\Big(\lim_{N\to \infty} \frac{1}{N}\sum_{n=0}^{N-1}
    \HK f\cdot T^n \overline{f}\HK_k^{2^k}\Big)^{1/2^{k+1}}.
\end{equation}
\end{lem}
Note that (\ref{lem2.1}) can be considered as an alternate definition of the seminorms.

\subsubsection{}

A factor $(Z,\mathcal{Z})$ of $X$ is {\em characteristic} for averages
\begin{equation}\label{F}
    \frac 1 N\sum_{n=0}^{N-1}f_1(T^nx)\ldots
f_d(T^{dn}x)
\end{equation}
if the limiting behavior of (\ref{F}) only depends on the
conditional expectation of $f_i$ with respect to $Z$:
\begin{equation*}
    ||\lim_{N\to \infty}\frac{1}{N}\sum_{n=1}^N(T^nf_1 T^{2n}f_2 \ldots T^{dn}f_d -
    T^n\E(f_1|\mathcal{Z}) T^{2n}\E(f_2|\mathcal{Z}) \ldots T^{dn}\E(f_d|\mathcal{Z}))||_{L^2}=0
\end{equation*}
for any $f_1,\ldots,f_d\in L^{\infty}(X,\X,\mu)$. The minimal characteristic factor of (\ref{F}) always exists \cite{HK05, Z}, and it is denoted by $(Z_{d-1},\mathcal{Z}_{d-1},\mu_{d-1})$.
An important property is

\begin{prop}\label{prop2.2}\cite[Lemma 4.3]{HK05}
For a $f\in L^\infty(\mu)$, $\HK f\HK_k=0$ if and only if
$\E(f|\ZZ_{k-1})=0$.
\end{prop}

\subsection{Nilsystems}\
\medskip

Let $G$ be a group. For $g, h\in G$, we write $[g, h] =
ghg^{-1}h^{-1}$ for the commutator of $g$ and $h$ and we write
$[A,B]$ for the subgroup spanned by $\{[a, b] : a \in A, b\in B\}$.
The commutator subgroups $G_j$, $j\ge 1$, are defined inductively by
setting $G_1 = G$ and $G_{j+1} = [G_j ,G]$. Let $k \ge 1$ be an
integer. We say that $G$ is {\em $k$-step nilpotent} if $G_{k+1}$ is
the trivial subgroup.


Let $G$ be a $k$-step nilpotent Lie group and $\Gamma$ a discrete
cocompact subgroup of $G$. The compact manifold $X = G/\Gamma$ is
called a {\em $k$-step nilmanifold}. The group $G$ acts on $X$ by
left translations and we write this action as $(g, x)\mapsto gx$.
The Haar measure $\mu$ of $X$ is the unique probability measure on
$X$ invariant under this action. Let $\tau\in G$ and $T$ be the
transformation $x\mapsto \tau x$ of $X$. Then $(X, \mu, T)$ is
called a {\em $k$-step nilsystem}.

Here are some basic properties of nilsystems.

\begin{thm}\cite{P, Leibman}\label{PL}
Let $(X = G/\Gamma,\mu , T )$ be a $k$-step nilsystem with $T$ the
translation by the element $t\in G$. Then:

\begin{enumerate}
\item $(X, T )$ is uniquely ergodic if and only if $(X,\mu , T )$ is
ergodic if and only if $(X, T )$ is minimal if and only if $(X, T )$
is transitive.

\item Let $Y$ be the closed orbit of some point $x\in X$. Then $Y$ can
be given the structure of a nilmanifold, $Y = H/\Lambda$, where $H$
is a closed subgroup of $G$ containing $t$ and $\Lambda$ is a closed
cocompact subgroup of $H$.

\end{enumerate}
\end{thm}

One can generalize the above results to the action of several translations. For example,
let $X= G/\Gamma$ be a nilmanifold with Haar measure $\mu$ and let
$t_1,\ldots , t_k$ be commuting elements of $G$. If the group
spanned by the translations $t_1, \ldots , t_k$ acts ergodically on
$(X,\mu)$, then $X$ is uniquely ergodic for this group. For more details, please refer to \cite{Leibman}.

\subsection{System of order $d-1$ and topological system of order $(d-1)$}\
\medskip

In \cite{HK05}, it is showed that $(Z_{d-1}, \ZZ_{d-1}, \mu_{d-1}, T)$ has a very nice structure.

\begin{thm}\cite{HK05}
Let $(X,\X,\mu , T)$ be an ergodic system and $d\in \N$. Then the
system $(Z_{d-1}, \ZZ_{d-1}, \mu_{d-1}, T)$ is a (measure theoretic) inverse
limit of $d-1$-step nilsystems.
$(Z_{d-1}, \ZZ_{d-1}, \mu_{d-1}, T)$ is called a {\em system of order $d-1$}.
\end{thm}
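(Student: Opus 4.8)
The plan is to reproduce the structure theorem of Host and Kra \cite{HK05} (an alternative proof is due to Ziegler \cite{Z}); it has two ingredients --- a hierarchy of factors built from the ``cubic'' seminorms, and the identification of the top one as an inverse limit of nilsystems. For the first ingredient: for $k\ge 0$ put $X^{[k]}=X^{2^{k}}$, with the $2^{k}$ coordinates indexed by $\{0,1\}^{k}$, and let $T^{[k]}$ be the diagonal transformation ($2^{k}$ copies of $T$). Define measures $\mu^{[k]}$ on $X^{[k]}$ by induction: $\mu^{[0]}=\mu$, and $\mu^{[k+1]}$ is the relatively independent self-joining of $\mu^{[k]}$ over the $\sigma$-algebra $\I_{k}$ of $T^{[k]}$-invariant sets, viewed as a measure on $X^{[k]}\times X^{[k]}=X^{[k+1]}$. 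For $f\in L^{\infty}(\mu)$ set
\[
\|f\|_{k}^{2^{k}}=\int_{X^{[k]}}\ \prod_{\epsilon\in\{0,1\}^{k}}\mathcal{C}^{|\epsilon|}f\bigl(x_{\epsilon}\bigr)\,d\mu^{[k]}(x),\qquad \mathcal{C}z=\bar z,\ \ |\epsilon|=\epsilon_{1}+\cdots+\epsilon_{k}.
\]
I would first check that each $\mu^{[k]}$ is a well-defined $T^{[k]}$-invariant probability measure, that the quantity above is real and non-negative, and --- via an iterated (Gowers-type) Cauchy--Schwarz argument exploiting the conditional-product structure --- that $\|\cdot\|_{k}$ is a seminorm on $L^{\infty}(\mu)$, non-decreasing in $k$, with $\|f\|_{1}=\bigl|\int f\,d\mu\bigr|$.

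For the second ingredient, I would show that for each $k$ there is a $T$-invariant sub-$\sigma$-algebra $\mathcal{Z}_{k}$ of $\X$, with associated factor $Z_{k}$, characterised by
\[
\E(f\mid\mathcal{Z}_{k})=0\ \Longleftrightarrow\ \|f\|_{k+1}=0\qquad (f\in L^{\infty}(\mu)),
\]
the point being that $\{f:\|f\|_{k+1}=0\}$ is a closed subspace of $L^{2}(\mu)$ whose orthocomplement is the range of a conditional expectation onto an invariant sub-$\sigma$-algebra. Here $Z_{0}$ is the trivial system and $Z_{1}$ is the Kronecker factor --- a rotation on a compact abelian group, hence an inverse limit of $1$-step nilsystems --- which anchors the induction below. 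I would then show, by repeated use of the van der Corput inequality, that $\frac1N\sum_{n=0}^{N-1}\prod_{j=1}^{d}T^{jn}f_{j}$ tends to $0$ in $L^{2}(\mu)$ whenever $\|f_{j}\|_{d}=0$ for some $j$; this identifies $Z_{d-1}$ (the minimal characteristic factor of (\ref{F})) with the factor cut out by the seminorm $\|\cdot\|_{d}$, which is the handle used for the structural claim.

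The structural claim --- that $Z_{k}$ is an inverse limit of $k$-step nilsystems --- is the substantial part. I would prove by induction on $k$ that $Z_{k}$ is an isometric extension of $Z_{k-1}$; concretely $Z_{k}=Z_{k-1}\times_{\rho}U$ for a compact abelian group $U$ and a measurable cocycle $\rho:Z_{k-1}\to U$, and the structure of $\mu^{[k]}$ forces $\rho$ to satisfy a generalised Conze--Lesigne functional equation. From these equations one assembles a group $\G_{k}$ of measure-preserving transformations of $Z_{k}$ compatible with the parallelepiped structure; this group acts transitively on $Z_{k}$, and the functional equations translate into the statement that $\G_{k}$ is $k$-step nilpotent. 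After reducing to the case in which $\G_{k}$ is a Lie group --- by exhibiting $Z_{k}$ as an inverse limit of ``finite-dimensional'' factors --- Mal'cev's structure theory identifies each of these with a nilmanifold $G/\Gamma$ carrying a nilrotation, i.e.\ a $k$-step nilsystem. Taking $k=d-1$ gives the theorem.

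The main obstacle is this last part, and within it two points. First, showing that the extension $Z_{k}\to Z_{k-1}$ is, up to cohomology, an abelian group extension whose cocycle solves a Conze--Lesigne-type equation: this requires a delicate analysis of the invariant $\sigma$-algebras $\I_{k}$ and of how $\mu^{[k]}$ disintegrates over $Z_{k-1}$. Second, recognising the resulting tower of abelian extensions as (an inverse limit of) nilsystems --- the ``converse direction'' of nilsystem theory --- which needs the reduction to Lie quotients together with an explicit construction of Mal'cev coordinates, carried out compatibly with the inverse-limit structure. Both are established in full in \cite{HK05} (and, by different methods, in \cite{Z}); for the purposes of the present paper we simply invoke that result.
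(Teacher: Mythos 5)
Your proposal is consistent with the paper's treatment: the paper offers no proof of this statement but simply cites Host--Kra \cite{HK05} (with Ziegler's alternative in \cite{Z}), which is exactly what you do after your outline. Your sketch of the Host--Kra argument (cubic measures and seminorms, the factors $Z_k$ cut out by $\|\cdot\|_{k+1}$, van der Corput to show they are characteristic, then the Conze--Lesigne/nilpotent-group analysis and Mal'cev theory giving the inverse limit of nilsystems) is an accurate summary of the cited proof, so nothing further is needed.
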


One also has the topological version of this notion, i.e. the topological inverse limit of nilsystems. First recall the
definition of an inverse limit of t.d.s. If
$(X_i,T_i)_{i\in \N}$ are t.d.s. with $diam(X_i)\le 1$ and
$\pi_i: X_{i+1}\rightarrow X_i$ are factor maps, the {\em inverse
limit} of the systems is defined to be the compact subset of
$\prod_{i\in \N}X_i$ given by $\{ (x_i)_{i\in \N }: \pi_i(x_{i+1}) =
x_i, i\in \N\}$, and we denote it by
$\lim\limits_{\longleftarrow}(X_i,T_i)_{i\in\N}$. It is a compact
metric space endowed with the distance $\rho((x_{i})_{i\in\N}, (y_{i})_{i\in
\N}) = \sum_{i\in \N} 1/2^i \rho_i(x_i, y_i )$, where $\rho_{i}$ is the metric in
$X_{i}$. We note that the
maps $T_i$ induce naturally a transformation $T$ on the inverse
limit.

\begin{de}\cite{HKM}
An inverse limit of $(d-1)$-step minimal
nilsystems is called a {\em topological system of order $(d-1)$}.
\end{de}

By Theorem \ref{PL}, a topological system of order $(d-1)$ is uniquely ergodic for each $d\in \N$.

\medskip

If ${\bf n} = (n_1,\ldots, n_d)\in \Z^d$ and $\ep\in \{0,1\}^d$, we
define
$${\bf n}\cdot \ep = \sum_{i=1}^d n_i\ep_i .$$

\begin{de}
Let $(X, T)$ be a t.d.s. and let $d\in \N$. The points $x, y \in X$ are
said to be {\em regionally proximal of order $d$} if for any $\d  >
0$, there exist $x', y'\in X$ and a vector ${\bf n} = (n_1,\ldots ,
n_d)\in\Z^d$ such that $\rho (x, x') < \d, \rho (y, y') <\d$, and $$
\rho (T^{{\bf n}\cdot \ep}x', T^{{\bf n}\cdot \ep}y') < \d\
\text{for any $\ep\in \{0,1\}^d\setminus \{(0,0,\ldots,0)\}$}.$$
The set of regionally proximal pairs of
order $d$ is denoted by $\RP^{[d]}$ (or by $\RP^{[d]}(X,T)$ in case of
ambiguity), and is called {\em the regionally proximal relation of
order $d$}.
\end{de}

The above definition was introduced in \cite{HKM} by Host-Kra-Maass and it was proved that for
a minimal distal system, $\RP^{[d]}$ is an equivalence relation and $X / \RP^{[d]}$ is a
topological system of order $d$. Later it was shown
that it is an equivalence relation for any minimal systems by Shao-Ye in \cite{SY}.
We will use the following theorems in the paper.

\begin{thm}\cite[Theorem 1.2]{HKM}\label{thm-SY}
Let $(X, T)$ be a minimal topologically distal system and let $d\in \N$. Then
$(X,T)$ is a topological system of order $d$ if and only if $\RP^{[d]}=\Delta_X$.
\end{thm}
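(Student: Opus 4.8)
\emph{Reverse implication.} This is the short direction and uses only the structural facts about $\RP^{[d]}$ recalled above. If $\RP^{[d]}(X,T)=\Delta_X$, then the quotient map $X\to X/\RP^{[d]}$ identifies no two distinct points, hence is an isomorphism of topological systems; since for a minimal topologically distal system $X/\RP^{[d]}$ is a topological system of order $d$, so is $(X,T)$. Thus the whole content is the forward implication.

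\emph{Forward implication.} Assume $(X,T)$ is a topological system of order $d$; we must show $\RP^{[d]}=\Delta_X$. The plan is to pass, via the cube characterization of $\RP^{[d]}$, to a statement about dynamical parallelepipeds, and then separate the inverse-limit part from the nilsystem part. Recall that $(x,y)\in\RP^{[d]}(X,T)$ if and only if the point of $X^{2^{d+1}}$ having $x$ at the vertex $\vec 0$ and $y$ at every other vertex lies in $\mathbf{Q}^{[d+1]}(X)$, the closure of the orbit of the diagonal under the $d+1$ face transformations. Write $X=\lim\limits_{\longleftarrow}(X_i,T_i)$ with each $(X_i,T_i)$ a minimal $d$-step nilsystem and let $p_i\colon X\to X_i$ be the projections. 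Since $\mathbf{Q}^{[d+1]}(X)$ is an orbit closure and orbit closures commute with inverse limits (as $X^{2^{d+1}}=\lim\limits_{\longleftarrow}X_i^{2^{d+1}}$ compatibly with the face actions), one gets $\mathbf{Q}^{[d+1]}(X)=\lim\limits_{\longleftarrow}\mathbf{Q}^{[d+1]}(X_i)$, and hence $(x,y)\in\RP^{[d]}(X)$ forces $(p_ix,p_iy)\in\RP^{[d]}(X_i)$ for every $i$. Therefore it suffices to prove $\RP^{[d]}(X_i)=\Delta_{X_i}$ for a minimal $d$-step nilsystem: then $p_ix=p_iy$ for all $i$, so $x=y$.

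\emph{The nilsystem case and the main obstacle.} For a minimal $d$-step nilsystem $X=G/\Gamma$ one has the algebraic description $\mathbf{Q}^{[d+1]}(G/\Gamma)=\{(g_\epsilon\Gamma)_\epsilon:(g_\epsilon)_\epsilon\in G^{[d+1]}\}$, where $G^{[d+1]}\le G^{2^{d+1}}$ is the Host--Kra cube group. If the parallelepiped with $x$ at $\vec 0$ and $y$ at every other vertex comes from $g=(g_\epsilon)_\epsilon\in G^{[d+1]}$, then, multiplying $g$ by a suitable constant element of the diagonal subgroup of $G^{[d+1]}$, we may assume $g_{\vec 0}=e$ and $g_\epsilon=h$ for all $\epsilon\neq\vec 0$; multiplying once more by the diagonal of $h^{-1}$ produces an element of $G^{[d+1]}$ equal to $h^{-1}$ at $\vec 0$ and to $e$ at every other vertex. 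By the standard identification of the corner subgroup of $G^{[d+1]}$ at $\vec 0$ with the $(d+1)$-st lower central series subgroup $G_{d+1}$, this forces $h^{-1}\in G_{d+1}=\{e\}$ since $G$ is $d$-step nilpotent, so $h=e$ and $x=y$. I expect the main obstacle to be exactly this computation inside $G^{[d+1]}$ — it relies on the detailed structure of Host--Kra cube groups of $d$-step nilpotent groups — together with establishing the cube characterization of $\RP^{[d]}$ used above. Topological distality underlies the whole argument: it is what guarantees that $\RP^{[d]}$ is a closed invariant equivalence relation with $X/\RP^{[d]}$ a genuine inverse limit of $d$-step nilsystems, so that the forward and reverse implications assemble into the stated equivalence.
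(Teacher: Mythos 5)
This statement is quoted in the paper from Host--Kra--Maass (\cite[Theorem 1.2]{HKM}); the paper gives no proof of it, so your attempt can only be measured against the known argument there. Your global strategy (reverse direction via the fact that $X/\RP^{[d]}$ is a system of order $d$ for minimal distal systems; forward direction by projecting to the nilsystems in the inverse limit, since factor maps send $\RP^{[d]}$ pairs to $\RP^{[d]}$ pairs, and then using the cube description of a nilsystem) is indeed close in spirit to the actual route. But the key step, the nilsystem case, has a genuine gap: from $(g_\epsilon\Gamma)_\epsilon=(x,y,\ldots,y)$ with $(g_\epsilon)\in G^{[d+1]}$ you may normalize $g_{\vec 0}=e$ by a diagonal multiplication, but at the other vertices you only know $g_\epsilon\in w\Gamma$ for a fixed $w$, i.e.\ $g_\epsilon=w\gamma_\epsilon$ with lattice elements $\gamma_\epsilon$ that can differ from vertex to vertex. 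No multiplication by a constant diagonal element can make these coordinates literally equal, so you never reach an element of $G^{[d+1]}$ supported at the single vertex $\vec 0$, and the appeal to the corner-subgroup identification with $G_{d+1}=\{e\}$ does not apply. What you actually need is: if $(h_\epsilon)\in G^{[d+1]}$ has $h_{\vec 0}=e$ and all other coordinates in one coset $w\Gamma$, then $w\in\Gamma$. That statement is essentially the unique-completion (parallelepiped) property of $\mathbf{Q}^{[d+1]}$ on a $d$-step nilmanifold, and its proof is not the one-line corner computation: it has to take the discrete cube group $\Gamma^{[d+1]}=G^{[d+1]}\cap\Gamma^{2^{d+1}}$ into account, typically by induction on the nilpotency class through the abelianization $G/G_2\Gamma$ (already for $d=1$, $G=\R$, $\Gamma=\Z$, the conclusion uses an arithmetic identity mod $\Z$, not triviality of $G_2$).

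The standard repair is exactly the route in Host--Kra and HKM: establish that for a system of order $d$ each point of $\mathbf{Q}^{[d+1]}$ is determined by its values at the other $2^{d+1}-1$ vertices; then $(y,y,\ldots,y)$ and $(x,y,\ldots,y)$ agree off the vertex $\vec 0$, so $x=y$. Two further caveats, less serious: the cube characterization of $\RP^{[d]}$ that you start from, and the algebraic description $\mathbf{Q}^{[d+1]}(G/\Gamma)=G^{[d+1]}\cdot$(diagonal), are themselves substantial theorems (you flag this, which is fair); and the claimed equality $\mathbf{Q}^{[d+1]}(X)=\lim\limits_{\longleftarrow}\mathbf{Q}^{[d+1]}(X_i)$ is more than you need --- the only fact used is that factor maps preserve $\RP^{[d]}$, which follows directly from the definition without any cube machinery. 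Finally, note that your ``short'' reverse implication is short only because it invokes the deep HKM structure theorem for $X/\RP^{[d]}$ of a minimal distal system; that is legitimate here since the paper records that fact, but it is not an elementary step.
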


\begin{thm}\cite[Subsection 5.1]{HKM}\label{thm-HKM}
Any system of order $d$ is isomorphic in the measure theoretic sense to a topological system of order $d$.
\end{thm}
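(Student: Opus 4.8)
The plan is to combine the Host--Kra structure theorem recalled above with the rigidity of nilsystems. By that theorem a system $(Z,\ZZ,\mu,T)$ of order $d$ (which is in particular ergodic) is the inverse limit, in the measure-theoretic sense, of an increasing sequence of factors $(Z_i,\ZZ_i,\mu_i,T)$, $i\in\N$, with $\bigvee_i\ZZ_i=\ZZ$, each $(Z_i,\ZZ_i,\mu_i,T)$ being isomorphic to an ergodic $d$-step nilsystem. So it suffices to (i) realize each $Z_i$ as a minimal $d$-step nilsystem carrying $\mu_i$ as its unique invariant measure, (ii) realize the bonding maps as \emph{continuous} factor maps between these minimal nilsystems, and (iii) pass to the topological inverse limit and check it has the required properties.

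For (i), I would invoke the classical fact that an ergodic nilsystem is minimal and uniquely ergodic: if $(G/\Gamma,\mathrm{Haar},T_a)$ is ergodic then $\mathrm{Haar}$-almost every point has a dense orbit, so $T_a$ is topologically transitive, and for nilsystems topological transitivity forces minimality and unique ergodicity with Haar measure as the unique invariant measure (see, e.g., Parry, Auslander--Green--Hahn, Leibman). Applying this to each $Z_i$ gives a minimal $d$-step nilsystem $\h Z_i$ together with a measure-theoretic isomorphism $\theta_i\colon(Z_i,\ZZ_i,\mu_i,T)\to(\h Z_i,\B(\h Z_i),\mu_i,T)$, where $\mu_i$ now denotes the (unique, Haar) invariant measure of $\h Z_i$.

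The step I expect to be the main obstacle is (ii). The measure-theoretic factor maps $Z_{i+1}\to Z_i$ transport through the $\theta_i$ to measure-theoretic factor maps $p_i\colon\h Z_{i+1}\to\h Z_i$ between minimal $d$-step nilsystems, and one must show that each $p_i$ agrees $\mu_{i+1}$-almost everywhere with a continuous factor map; equivalently, that the measure factor $\h Z_i$ of the nilsystem $\h Z_{i+1}=G/\Gamma$ is realized, together with its algebraic quotient map, by a quotient $G/H\Gamma$ for a suitable closed rational normal subgroup $H$ of $G$. The tool to be exploited here is the rigidity of morphisms of nilmanifolds, namely that measurable intertwiners of ergodic nilsystems are automatically continuous. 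Granting this, replace each $p_i$ by its continuous representative (which again pushes Haar to Haar, so $(p_i)_*\mu_{i+1}=\mu_i$).

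For (iii), form the topological inverse limit $\h Z=\lim\limits_{\longleftarrow}(\h Z_i,p_i)_{i\in\N}$ with the induced homeomorphism $\h T$; by definition $(\h Z,\h T)$ is a topological system of order $d$. Since the $p_i$ are onto, each coordinate projection $q_n\colon\h Z\to\h Z_n$ is onto; because a point of $\h Z$ is determined by $q_n$ of it together with the continuous maps $p_i$, the basic open subsets of $\h Z$ have the form $q_n^{-1}(U)$, and any nonempty closed invariant $M\subseteq\h Z$ has $q_n(M)=\h Z_n$ for all $n$ by minimality of $\h Z_n$, hence is dense, so $M=\h Z$; thus $\h Z$ is minimal. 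The compatible measures $\mu_i$ determine a $\h T$-invariant measure $\h\mu$ on $\h Z$, and it is the unique one since any invariant $\nu$ satisfies $(q_n)_*\nu=\mu_n$ for all $n$; so $(\h Z,\h T)$ is strictly ergodic. Finally $(\h Z,\B(\h Z),\h\mu,\h T)$ is by construction the measure-theoretic inverse limit of the systems $(\h Z_i,\mu_i,T)\cong(Z_i,\ZZ_i,\mu_i,T)$ along the $p_i$; since the coordinate $\sigma$-algebras generate $\B(\h Z)$ while $\bigvee_i\ZZ_i=\ZZ$, uniqueness of the inverse limit in the measure category gives $(\h Z,\B(\h Z),\h\mu,\h T)\cong(Z,\ZZ,\mu,T)$, which is the assertion.
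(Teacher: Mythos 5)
This theorem is stated in the paper as a quotation of \cite[Theorem A.1]{HKM} and is not proved there, so there is no in-paper argument to compare against; what you have written is essentially the strategy of Host--Kra--Maass themselves: express the system of order $d$ as a measure-theoretic inverse limit of ergodic $d$-step nilsystems, use that an ergodic nilsystem is minimal and uniquely ergodic (so it is its own strictly ergodic topological model), replace the measurable bonding maps by continuous factor maps, and pass to the topological inverse limit. Your steps (i) and (iii) are sound: the continuous representatives are onto by minimality, projections of a closed invariant subset of the inverse limit are closed and invariant, cylinder sets generate both the topology and the Borel $\sigma$-algebra, so minimality and unique ergodicity pass to the limit, and the compatibility of the transported bonding maps (up to null sets) identifies the limit measure-theoretically with $(Z,\ZZ,\mu,T)$ since $\bigvee_i\ZZ_i=\ZZ$.

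The genuine gap is the one you flag yourself: step (ii) is not a technical obstacle to be ``granted'' but the entire content of the theorem. The claim that a measurable, measure-preserving, a.e.\ intertwining map between ergodic $d$-step nilsystems agrees almost everywhere with a continuous factor map (equivalently, that every measurable factor of an ergodic nilsystem is realized algebraically as a quotient $G/H\Gamma$ together with its canonical projection) is a nontrivial rigidity statement, and proving it is precisely what Appendix A of \cite{HKM} does; compare also Lemma \ref{DDMSY}, which records the coincidence of measurable and topological factors of given order for such systems. As written, your proposal reduces the theorem to an unproved lemma of comparable depth, so to be complete it must either prove that rigidity (e.g.\ via the dual description of factors of nilsystems and induction on the nilpotency class, the abelian case being the elementary cocycle argument you implicitly use for rotations) or cite it precisely.
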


\begin{lem}\cite[Lemma A.3]{DDMSY}\label{DDMSY}
Let $(X,T)$ be a system of order $d$,
then the maximal measurable and topological factors of order $j$
coincide, where $j\leq d$.
\end{lem}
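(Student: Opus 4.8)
The plan is to exploit the structural rigidity of a system of order $d$: by definition it is an inverse limit of minimal $d$-step nilsystems, and such systems have very few factors. First I would reduce to the case where $(X,T)$ is itself a minimal $d$-step nilsystem, say $X = G/\Gamma$, since a factor of order $j$ of an inverse limit is pulled back (up to isomorphism) from a finite stage, and factors of the inverse limit correspond to compatible families of factors of the stages; thus it suffices to check the claim at each stage. For a nilsystem $G/\Gamma$, both the maximal measurable factor of order $j$ and the maximal topological factor of order $j$ are quotients by a normal subgroup of $G$ — concretely, by $G_{j+1}$, the $(j{+}1)$-st term of the lower central series (times a suitable closed subgroup). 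The key point is that the measure-theoretic maximal factor of order $j$ of a nilsystem is again a nilsystem of order $j$, hence, by Theorem \ref{thm-HKM} and Lemma \ref{DDMSY} applied inductively (or rather, directly by the nilpotent structure theory of Host--Kra), it is the topological quotient $G/(G_{j+1}\Gamma)$.

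The key steps, in order, are: (1) recall that for a system of order $d$ the maximal topological factor of order $j$ exists and equals the inverse limit of the maximal order-$j$ topological factors of the defining nilsystems; (2) recall that the maximal measurable factor of order $j$, the factor $Z_j$, is characterized by $\RP^{[j]}$ in the measurable sense (via Theorem \ref{thm-HKM}, $Z_j$ is itself a topological system of order $j$), so any measurable factor map onto a system of order $j$ factors through $Z_j$; (3) show that the topological quotient map $X \to X/\RP^{[j]}$ — which is a topological system of order $j$ by Theorem \ref{thm-SY}, since $(X,T)$ is distal and $\RP^{[j]}(X/\RP^{[j]}) = \Delta$ — realizes $Z_j$ as a topological factor; (4) conclude maximality on the topological side coincides with maximality on the measurable side because the order-$j$ topological factors of $X$ are totally ordered by the natural partial order and the map to $X/\RP^{[j]}$ dominates all of them. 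The equality of the two maximal factors then follows since each is a factor of the other.

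The main obstacle will be step (3)--(4): verifying that a \emph{measurable} factor map from $X$ onto a topological system of order $j$ is automatically (a.e. equal to) a \emph{continuous} map, so that the measurable maximal factor genuinely sits inside the topological picture. This rigidity is where distality is essential — for a minimal distal system, measurable isomorphisms between uniquely ergodic models can be upgraded to continuous ones on residual invariant sets, and combined with minimality one gets an honest topological factor. I would handle this by invoking that $X$, being a topological system of order $d$, is minimal and uniquely ergodic, so its measurable factors of order $j$, which are also uniquely ergodic systems of order $j$, admit topological models that are forced to coincide with the canonical quotient $X/\RP^{[j]}$; the reference \cite{HKM} (Theorem A.1 and surrounding discussion) supplies exactly the needed comparison between the measurable and topological pronilfactors, and the lemma is essentially a packaging of that fact for inverse limits.
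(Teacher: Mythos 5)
The paper does not actually prove this lemma: it is quoted directly from \cite[Lemma A.3]{DDMSY}, so there is no internal argument to compare yours with; one can only measure your sketch against the standard proof underlying that reference. Your outline is essentially that standard route: for a single ergodic $d$-step nilsystem $G/\Gamma$ identify both the maximal measurable factor of order $j$ (Host--Kra) and the maximal topological factor $X/\RP^{[j]}$ (Host--Kra--Maass) with $G/G_{j+1}\Gamma$, then pass to inverse limits, so in outline your plan is correct. Two cautions. First, your justification ``a factor of order $j$ of an inverse limit is pulled back (up to isomorphism) from a finite stage'' is false as stated -- the maximal order-$j$ factor itself is typically not pulled back from any finite stage (e.g.\ an inverse limit of rotations is its own maximal order-$1$ factor); what you actually need, and do state correctly in step (1), is that the maximal order-$j$ factor of the limit is the inverse limit of the maximal order-$j$ factors of the stages, and on the measurable side this commutation of $Z_j$ with inverse limits is a genuine (if standard) fact about the Host--Kra seminorms that your sketch leaves implicit. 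Second, the upgrade of the measurable pronilfactor to a topological factor -- the point you correctly single out as the crux -- is in your write-up delegated wholesale to \cite{HKM}; that is legitimate as a citation, but it means the hardest step of the lemma is assumed rather than proved, exactly as in the paper under review.
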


\section{ergodic decomposition of self-joinings under $T\times T^2\times \ldots \times T^d$}\label{section-furstenberg-joining}

In this section we study ergodic decomposition of self-joinings under $T\times T^2\times \ldots \times T^d$. The theorems in this section are important for our proofs, and also they have their own interest.

\subsection{Furstenberg self-joining}

Let $T: X\rightarrow X$ be a map and $d\in \N$. Set
$$\tau_d=\tau_d(T)=T\times \ldots\times T \ (d \ \text{times}),$$
$$\sigma_d=\sigma_d(T)=T\times T^2 \times \ldots \times T^{d}$$ and
$$\sigma_d'=\sigma'_d(T)=\id \times T\times \ldots \times T^{d-1}=\id\times \sigma_{d-1}.$$
Note that $\langle\tau_d, \sigma_d\rangle=\langle\tau_d,\sigma_d'\rangle$.
For any $x\in {X}$, let $N_d({X},x)=\overline{\O((x,\ldots,x),
\langle\tau_d, \sigma_d\rangle)}$, the orbit closure of
$(x,\ldots,x)$ ($d$ times) under the action of the group
$\langle\tau_d, \sigma_d\rangle$. We remark that if $({X},T)$ is
minimal, then all $N_d({X},x)$ coincide, which will be denoted by
$N_d({X})$. It was shown by Glasner \cite{G94} that if $({X},T)$
is minimal, then $(N_d({X}), \langle\tau_d, \sigma_d\rangle)$ is
minimal. Hence if $(N_d(X), \langle\tau_d, \sigma_d\rangle)$ is uniquely ergodic, then it is strictly
ergodic.

\begin{de}\label{de-Furstenberg-selfjoining}
Let $(X,T)$ be a t.d.s with $\mu\in M_T(X)$. For $d\ge 1$ let $\mu^{(d)}$ the measure on $X^d$ defined by
$$\int_{X^d} \bigotimes_{j=1}^d f_j d\mu^{(d)}=\lim_{N\rightarrow +\infty} \frac{1}{N}\int_X \prod_{j=1}^d f_j(T^{jd}x) d\mu(x)$$
for $f_i\in L^{\infty}(X,\mu)$, $1\le j\le d$, where the limits exists by \cite[Theorem 1.1]{HK05}.

We call $\mu^{(d)}$ the {\em Furstenberg
self-joining}. Clearly, it is invariant under $\tau_d$ and $\sigma_d$.
\end{de}

For a t.d.s. $(X,T)$, $\mu\in M_T(X)$and $d\in\N$, it is easy
to see that
\begin{equation*}
\frac {1}{N} \sum_{n=0}^{N-1} \sigma_d^n \mu_\D^d\longrightarrow
\mu^{(d)},\ N\to \infty, \quad \text{weak$^*$ in $M(X^d)$},
\end{equation*}
where $\mu_\D^d$ is the diagonal measure on $X^d$ as defined in
\cite{F77}, i.e. it is defined on $X^d$ as follows
\begin{equation*}
    \int_{X^d} f_1(x_1)\ldots f_d(x_d)\ d \mu_\D^d(x_1,\ldots,x_d)=
    \int_X f_1(x)\ldots f_d(x)\ d\mu(x),
\end{equation*}
where $f_1,\ldots, f_d\in C(X)$.

\subsection{The $\sigma$-algebra of invariant sets under $\sigma_d=T\times T^2\times \ldots \times T^d$}

\

In this subsection we study the $\sigma$-algebra of invariant sets under $\sigma_d=T\times T^2\times \ldots \times T^d$. We will show we always can deduce this $\sigma$-algebra to the one on its nilfactors.

For a m.p.t. $(X,\X, \mu, T)$ and $d\in \mathbb{N}$, recall that a measure $\lambda$ on $X^d$ is called {\em $d$-fold self-joining} of $X$, if it is $\tau_d$-invariant and maps onto $\mu$ under the nature $j^{th}$ coordinate projection $X^d\rightarrow X$, $1\le j\le d$. The proof of the following lemma is similar to the proof of Theorem 12.1 in \cite{HK05}.

\begin{lem}\label{AP-lem-vdc}
Let $(X,\X,\mu,T)$ be an ergodic system and $d\ge 1$ be an integer.
Suppose that $\lambda$ is a $d$-fold self-joining of $X$.
Assume that $f_1,\ldots,f_d\in L^\infty(X,\mu)$ with
$\|f_j\|_\infty\le 1$ for $j=1,\ldots,d$. Then
\begin{equation}\label{AP-VDC}
\limsup_{N\to\infty}\Big\|
\frac{1}{N}\sum_{n=0}^{N-1}f_1(T^nx_1)f_2(T^{2n}x_2)\ldots
f_d(T^{dn}x_d) \Big\|_{L^2(X^d, \lambda)}\le \min_{1\le l\le d}\{l\cdot
\interleave f_l\interleave_d \}
\end{equation}
\end{lem}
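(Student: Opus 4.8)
## Proof Proposal

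The plan is to follow the van der Corput approach used in the proof of Theorem 12.1 of \cite{HK05}, adapted to the setting where the underlying system is a $d$-fold self-joining $\lambda$ rather than the product measure. First I would fix $l\in\{1,\ldots,d\}$ and show that the limit in (\ref{AP-VDC}) is bounded by $l\cdot\HK f_l\HK_d$; taking the minimum over $l$ then gives the claim. By relabeling it suffices to treat one fixed coordinate, say $l=d$ (the general case is identical after permuting the roles of the $f_j$ and using that $T^{jn}$ with $j\neq d$ contributes a power that is again a single transformation). Write $F_n(x_1,\ldots,x_d)=f_1(T^nx_1)f_2(T^{2n}x_2)\cdots f_d(T^{dn}x_d)$, an element of the unit ball of $L^\infty(X^d,\lambda)$.

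Next I would apply the van der Corput inequality in $L^2(X^d,\lambda)$: for any $H\in\N$,
\begin{equation*}
\limsup_{N\to\infty}\Big\|\frac1N\sum_{n=0}^{N-1}F_n\Big\|_{L^2(\lambda)}^2
\le \frac1H\sum_{|h|<H}\Big(1-\frac{|h|}{H}\Big)\,\limsup_{N\to\infty}\frac1N\sum_{n=0}^{N-1}\langle F_{n+h},F_n\rangle_{L^2(\lambda)}.
\end{equation*}
Computing $\langle F_{n+h},F_n\rangle_{L^2(\lambda)}=\int_{X^d}\prod_{j=1}^d f_j(T^{j(n+h)}x_j)\overline{f_j(T^{jn}x_j)}\,d\lambda$, I would set $g_{j,h}=f_j\cdot\overline{T^{jh}f_j}$ (the conjugate of this, depending on conventions) so that the integrand becomes $\prod_{j=1}^d (T^{jn}g_{j,h})(x_j)$. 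The point is that this is again an average of the same shape, now over the functions $g_{j,h}$, so one can iterate: after $d-1$ further applications of van der Corput (one for each of the remaining coordinates), the innermost average collapses and the whole expression is controlled by an average over $(h_1,\ldots,h_{d-1})$ of a single coordinate integral $\int_X \prod_{\ep}T^{\text{(something)}}f_d\,d\mu$, which by the definition of the Gowers–Host–Kra seminorm $\HK\cdot\HK_d$ (applied to $f_d$ on the ergodic system $(X,\mu)$) is bounded by a power of $\HK f_d\HK_d$. Tracking the constants through the $d-1$ Cauchy–Schwarz steps produces the factor $d$ (hence $l$ after relabeling); letting $H\to\infty$ removes the error terms.

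The main obstacle I anticipate is bookkeeping rather than a conceptual difficulty: one must carefully verify that each successive van der Corput step can indeed be performed over a \emph{different} coordinate $x_j$ while the other coordinates are frozen, and that the self-joining $\lambda$ (as opposed to $\mu^d$) causes no trouble — this works because at each stage one integrates out one coordinate using only the marginal of $\lambda$ on that coordinate, which is $\mu$ since $\lambda$ is a self-joining. A second point requiring care is the precise form of the seminorm estimate: I would invoke the standard inequality that for an ergodic system the average over $\mathbf{h}\in\Z^{d-1}$ of $\big|\int_X \prod_{\ep\in\{0,1\}^{d-1}} \mathcal{C}^{|\ep|} T^{\mathbf{h}\cdot\ep} f\,d\mu\big|$ is dominated by $\HK f\HK_{d}^{2^{d-1}}$, together with the telescoping of the Cauchy–Schwarz constants; this is exactly the mechanism in \cite{HK05} and only the ambient measure space changes. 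No new ideas beyond \cite{HK05} seem necessary, which is presumably why the authors say the proof is "similar."
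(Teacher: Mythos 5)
Your overall strategy is the same as the paper's: one van der Corput application in the averaging parameter, then a shift by the diagonal action plus Cauchy--Schwarz to strip one coordinate (using that the marginal of $\lambda$ on that coordinate is $\mu$, and that the image of $\lambda$ on the remaining coordinates is again a self-joining), and finally the seminorm recursion $\HK f\HK_{k+1}^{2^{k+1}}=\lim_{H}\frac1H\sum_{h<H}\HK f\cdot T^h\overline{f}\HK_k^{2^k}$ to convert the averaged correlation integrals into $\HK f_l\HK_d$. The paper merely packages your iteration as an induction on $d$, with the inductive statement quantified over \emph{all} $d$-fold self-joinings precisely so that it can be applied to the image measure $\lambda'$ on the last $d$ coordinates; that is an organizational rather than a mathematical difference.

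The one step of your sketch that would fail as written is the reduction ``by relabeling it suffices to treat $l=d$.'' The exponent $T^{jn}$ is attached to the $j$-th coordinate, not to the function $f_j$, so permuting the $f_j$ changes the average being estimated, and the constant $l$ in the bound $l\cdot\HK f_l\HK_d$ genuinely records where $f_l$ sits: in the iteration $f_l$ is differenced along steps $lh$ (multiples of $l$), and passing from $\frac1H\sum_{h<H}\HK f_l\cdot T^{lh}\overline{f_l}\HK_d$ to the average over all step sizes that the seminorm recursion requires costs exactly a factor $l$ (via nonnegativity and $\frac1H\sum_{h<H}a_{lh}\le l\cdot\frac1{lH}\sum_{m<lH}a_m$), not via the Cauchy--Schwarz steps as you suggest. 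So ``the factor $d$, hence $l$ after relabeling'' is not a valid accounting; instead you must run the argument with the target coordinate $l$ kept in place, which is what the paper does: for $l\ge 2$ it strips the first coordinate and invokes the inductive statement at index $l-1$ for the induced $d$-fold self-joining, while the case $l=1$ requires a slightly different shift before Cauchy--Schwarz. With that correction (which uses no ideas beyond those already in your sketch), your proposal coincides with the paper's proof.
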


\begin{proof}
We proceed by induction. For $d=1$, by the Ergodic Theorem,
$$\|\frac{1}{N}\sum_{n=0}^{N-1}T^n f_1 \|_{L^2(\mu)}\to |\int f_1d\mu|=\HK f_1 \HK_1.$$
Let $d\ge 1$ and assume that (\ref{AP-VDC}) holds for $d$ and any $d$-fold self-joining of $X$. Let
$f_1,\ldots,f_{d+1}\in L^\infty(\mu)$ with $\|f_j\|_\infty\le 1$ for
$j=1,\ldots,d+1$. Let $\lambda$ be any $d+1$-fold self-joining of $X$. Choose $l\in \{2,3,\ldots,d+1\}$. (The case $l=1$
is similar). Write
$$\xi_n=\bigotimes_{j=1}^{d+1}T^jf_j=f_1(T^nx_1)f_2(T^{2n}x_2)\ldots
f_{d+1}(T^{(d+1)n}x_{d+1}).$$

By the van der Corput lemma \cite{Bergelson87},
\begin{equation*}
    \limsup_{N\to \infty} \big\| \frac{1}{N}\sum _{n=0}^{N-1}
    \xi_n\big\|^2_{L^2(\lambda)}
    \le \limsup_{H\to \infty}\frac{1}{H} \sum_{h=0}^{H-1}\limsup_{N\to\infty}
    \left|\frac{1}{N}\sum_{n=0}^{N-1} \int \overline{\xi}_{n+h}\cdot\xi_n d\lambda \right|.
\end{equation*}
Letting $M$ denote the last $\limsup$, we need to show that $M\le
l^2\HK f_l\HK^2_{d+1}$. For any $h\ge 1$,
\begin{equation*}
\begin{split}
    & \ \ \ \left|\frac{1}{N}\sum_{n=0}^{N-1} \int \overline{\xi}_{n+h}\cdot\xi_n d\lambda
    \right| \\ &=\left | \int(f_1\cdot T^h\overline{f}_1)\otimes \frac{1}{N}\sum_{n=0}^{N-1}
    (\sigma_d)^n \bigotimes_{j=2}^{d+1}f_j\cdot T^{jh}\overline{f}_j d
    \lambda(x_1,\ldots,x_{d+1})\right|\\
    &\le \Big\|f_1\cdot T^h\overline{f}_1 \Big\|_{L^2(\lambda)}\cdot
    \Big\|\frac{1}{N}\sum_{n=0}^{N-1}
    (\sigma_d)^n \bigotimes_{j=2}^{d+1}f_j\cdot T^{jh}\overline{f}_j
    \Big\|_{L^2(\lambda)}\\
    &= \Big\|f_1\cdot T^h\overline{f}_1 \Big\|_{L^2(\mu)}\cdot
    \Big\|\frac{1}{N}\sum_{n=0}^{N-1}
    (\sigma_d)^n \bigotimes_{j=2}^{d+1}f_j\cdot T^{jh}\overline{f}_j
    \Big\|_{L^2(\lambda')}
\end{split}
\end{equation*}
where $\lambda'$ is the image of $\lambda$ to the last $d$ coordinates.
It is clear $\lambda'$ is a $d$-fold self-joining of $X$, and by the inductive assumption,
\begin{equation*}
   \left|\frac{1}{N}\sum_{n=0}^{N-1} \int \overline{\xi}_{n+h}\cdot\xi_n d\lambda
    \right|\le l\HK f_l\cdot T^{lh}\overline{f}_l\HK_d.
\end{equation*}
We get
\begin{equation*}
\begin{split}
    M& \le l\cdot \limsup_{H\to\infty}
    \frac{1}{H}\sum_{h=0}^{H-1}\HK f_l\cdot T^{lh}\overline{f}_l\HK_d
    \le l^2\cdot\limsup_{H\to\infty}\frac{1}{H}\sum_{h=0}^{H-1} \HK f_l\cdot T^h
    \overline{f}_l\HK_d\\
    &\le l^2\cdot \limsup_{H\to\infty} \Big( \frac{1}{H}\sum_{h=0}^{H-1} \HK f_l\cdot T^h
    \overline{f}_l\HK_d^{2^d} \Big)^{1/2^d}\\
    &=l^2\cdot \HK f_l\HK_{d+1}^2.
\end{split}
\end{equation*}
The last equation follows from Lemma \ref{lemmaE1}. The proof is completed.
\end{proof}

\begin{cor}\label{coro3.3}
Let $(X,\X,\mu,T)$ be an ergodic system and $d\ge 2$ be an integer.
Suppose that $\lambda$ is a $d$-fold self-joining of $X$ and it is invariant under $\sigma_d$.
Assume that $f_1,\ldots,f_d\in L^\infty(X,\mu)$ with
$\|f_j\|_\infty\le 1$ for $j=1,\ldots,d$. Then
\begin{equation}
\Big |
\int f_1(x_1)f_2(x_2)\ldots
f_d(x_d) d \lambda(x_1,\ldots, x_d)\Big | \le d \min_{1\le l\le d}\{
\interleave f_l\interleave_{d-1} \}
\end{equation}
\end{cor}

\begin{lem}\label{AP-lem-vdc2}
Let $(X,\X,\mu,T)$ be an ergodic system and $d\in \N$. Suppose that $\lambda$ is a $d$-fold self-joining of $X$ and it is $\sigma_d$-invariant. Assume that
$f_1,\ldots,f_d\in L^\infty(X,\mu)$. Then
\begin{equation}\label{}
    \E\Big(\bigotimes_{j=1}^d f_j\Big|\I(X^d, \X^d, \lambda,\sigma_d)\Big)  =\E\Big( \bigotimes_{j=1}^d
    \E(f_j|\ZZ_{d-1})\Big |\I(X^d, \X^d, \lambda,\sigma_d) \Big).
\end{equation}

\end{lem}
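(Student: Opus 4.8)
The plan is to reduce the statement about the conditional expectation over the $\sigma_d$-invariant $\sigma$-algebra to the seminorm estimate in Lemma \ref{AP-lem-vdc}. The key observation is that for a $\sigma_d$-invariant measure $\lambda$, the conditional expectation $\E(g\mid \I(X^d,\lambda,\sigma_d))$ is precisely the $L^2(\lambda)$-limit of the ergodic averages $\frac1N\sum_{n=0}^{N-1}\sigma_d^n g$ (this is the mean ergodic theorem applied to the unitary operator induced by $\sigma_d$). So it suffices to show that
\begin{equation*}
    \lim_{N\to\infty}\Big\|\frac1N\sum_{n=0}^{N-1}\sigma_d^n\Big(\bigotimes_{j=1}^d f_j - \bigotimes_{j=1}^d \E(f_j\mid\ZZ_{d-1})\Big)\Big\|_{L^2(\lambda)}=0.
\end{equation*}

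First I would set up a telescoping (hybrid) argument: write the difference $\bigotimes_{j=1}^d f_j - \bigotimes_{j=1}^d \E(f_j\mid\ZZ_{d-1})$ as a sum of $d$ terms, where in the $l$-th term the first $l-1$ factors are replaced by their conditional expectations onto $\ZZ_{d-1}$, the $l$-th factor is $f_l-\E(f_l\mid\ZZ_{d-1})$, and the remaining factors are the original $f_j$. By linearity of the averaging operator and the triangle inequality in $L^2(\lambda)$, it is enough to bound each such hybrid term. For the $l$-th term, apply Lemma \ref{AP-lem-vdc} (after normalizing all functions to have sup-norm at most $1$, which only changes the bound by a harmless constant), taking the minimum on the right-hand side at the index $l$: the bound is a constant multiple of $\HK f_l-\E(f_l\mid\ZZ_{d-1})\HK_d$. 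The crucial point is then that this seminorm vanishes: by the defining property of the factor of order $d-1$ — that $\ZZ_{d-1}$ is characteristic and $\HK\cdot\HK_d$ (the Host-Kra seminorm, called $\HK\cdot\HK_d$ here) has $\ZZ_{d-1}$ as the associated factor — we have $\HK f_l-\E(f_l\mid\ZZ_{d-1})\HK_d=0$. Summing over $l$ gives that the whole average converges to $0$ in $L^2(\lambda)$, which is exactly the claimed identity.

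I should be slightly careful about which seminorm Lemma \ref{AP-lem-vdc} uses and how it relates to $\ZZ_{d-1}$. The notation $\HK\cdot\HK_d$ in the excerpt is the $d$-th Host-Kra seminorm on $X$ (via $\HK$ which is \texttt{\textbackslash interleave}), and the relevant fact, from \cite{HK05}, is that $\HK f\HK_d=0$ if and only if $\E(f\mid\ZZ_{d-1})=0$; equivalently $\HK f-\E(f\mid\ZZ_{d-1})\HK_d=0$ always. This is the one external input beyond Lemma \ref{AP-lem-vdc} that the argument needs, and it is standard. The rest is bookkeeping: the hybrid decomposition has exactly $d$ terms, and for each we simply choose the index at which to evaluate the minimum in \eqref{AP-VDC}.

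The main obstacle — really the only nontrivial point — is justifying the identification of $\E(\,\cdot\mid\I(X^d,\lambda,\sigma_d))$ with the limit of the $\sigma_d$-ergodic averages and then legitimately applying Lemma \ref{AP-lem-vdc} to the \emph{differences} rather than to a product of honest $L^\infty$ functions: one must check that $f_l-\E(f_l\mid\ZZ_{d-1})$ is still in $L^\infty$ (it is, with norm at most $2\|f_l\|_\infty$) and that the conditional expectations appearing in the other slots are also bounded, so that Lemma \ref{AP-lem-vdc} applies verbatim after rescaling. Once these are in place the proof is a short computation, so I do not expect genuine difficulty, only care with the normalization constants.
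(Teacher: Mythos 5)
Your proposal is correct and is essentially the paper's own argument: the paper likewise reduces via the telescoping identity (its Lemma \ref{lem-product}) to the case where one factor satisfies $\E(f_k\mid\ZZ_{d-1})=0$, identifies the conditional expectation on $\I(X^d,\lambda,\sigma_d)$ with the limit of the $\sigma_d$-ergodic averages, and then kills that limit using Lemma \ref{AP-lem-vdc} together with the fact that $\E(f_k\mid\ZZ_{d-1})=0$ forces $\HK f_k\HK_d=0$. Your hybrid decomposition and normalization remarks are just an explicit spelling-out of the same steps.
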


\begin{proof}
By telescoping, it suffices to show that
\begin{equation}\label{}
\E\Big(\bigotimes_{j=1}^d f_j\Big|\I(X^d,\X^d, \lambda,\sigma_d)\Big)
=0
\end{equation}
whenever $\E(f_k|\ZZ_{d-1})=0$ for some $k\in \{1,2,\ldots,d\}$.
This condition implies that $\HK f_k\HK_d=0$ by Proposition \ref{prop2.2}. By the Ergodic Theorem
and Lemma \ref{AP-lem-vdc}, we have
\begin{equation*}
\begin{split}
&\ \ \ \ \Big\|\E\Big(\bigotimes_{j=1}^d f_j\Big|\I(X^d, \X^d,
\lambda,\sigma_d)\Big)\Big\|_{L^2(\lambda)} \\
&=\lim_{N\to\infty}\Big\|
\frac{1}{N}\sum_{n=0}^{N-1}f_1(T^nx_1)f_2(T^{2n}x_2)\ldots
f_d(T^{dn}x_d) \Big\|_{L^2(\lambda)}\le k \cdot\interleave
f_k\interleave_d =0.
\end{split}
\end{equation*}
So the lemma follows.
\end{proof}

\begin{prop}
Let $(X,\X,\mu,T)$ be ergodic and $d\in \N$. Suppose that
$\lambda$ is a $d$-fold self-joining of $X$ and
it is $\sigma_d$-invariant. Then the
$\sigma$-algebra $\I(X^d, \X^d, \lambda,\sigma_d)$ is measurable with
respect to $\ZZ_{d-1}^{d}$.
\end{prop}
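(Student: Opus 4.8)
The plan is to show that $\I(X^d,\lambda,\sigma_d)$ is measurable with respect to $\ZZ_{d-1}^d$, i.e. that every $\sigma_d$-invariant $L^2$ function can be computed from the conditional expectations onto the factor $\ZZ_{d-1}$ in each coordinate. The natural strategy is: take an arbitrary bounded function $F$ on $X^d$, approximate it in $L^2(\lambda)$ by finite linear combinations of tensor products $\bigotimes_{j=1}^d f_j$ with $f_j\in L^\infty(X,\mu)$ (such functions are dense, since the product $\sigma$-algebra $\X^{\otimes d}$ is generated by rectangles and $\lambda$ is a measure on $(X^d,\X^{\otimes d})$), and then apply Lemma \ref{AP-lem-vdc2} term by term.

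First I would fix $F\in L^\infty(X^d,\lambda)$ and, given $\ep>0$, choose $G=\sum_{i=1}^k \bigotimes_{j=1}^d f_{i,j}$ with $\|F-G\|_{L^2(\lambda)}<\ep$. Since conditional expectation onto a $\sigma$-algebra is an $L^2$-contraction, $\|\E(F|\I(X^d,\lambda,\sigma_d)) - \E(G|\I(X^d,\lambda,\sigma_d))\|_{L^2(\lambda)}<\ep$. By linearity of conditional expectation and Lemma \ref{AP-lem-vdc2} applied to each summand,
\begin{equation*}
\E\Big(G\,\Big|\,\I(X^d,\lambda,\sigma_d)\Big)=\sum_{i=1}^k\E\Big(\bigotimes_{j=1}^d \E(f_{i,j}|\ZZ_{d-1})\,\Big|\,\I(X^d,\lambda,\sigma_d)\Big)=\E\Big(\widetilde G\,\Big|\,\I(X^d,\lambda,\sigma_d)\Big),
\end{equation*}
where $\widetilde G=\sum_{i=1}^k\bigotimes_{j=1}^d \E(f_{i,j}|\ZZ_{d-1})$ is manifestly $\ZZ_{d-1}^{d}$-measurable. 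Thus $\E(F|\I(X^d,\lambda,\sigma_d))$ is approximated within $\ep$ in $L^2(\lambda)$ by the $\ZZ_{d-1}^{d}$-measurable function $\E(\widetilde G|\I)$; letting $\ep\to 0$ shows $\E(F|\I(X^d,\lambda,\sigma_d))$ lies in the $L^2(\lambda)$-closure of $\ZZ_{d-1}^{d}$-measurable functions, which is exactly $L^2(X^d,\ZZ_{d-1}^{d},\lambda)$. Since this holds for every bounded $F$, the $\sigma$-algebra $\I(X^d,\lambda,\sigma_d)$ is contained (mod $\lambda$) in $\ZZ_{d-1}^{d}$, which is the assertion.

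The only genuinely delicate point is bookkeeping rather than mathematics: one must be sure that the density of tensor-product combinations is applied in $L^2(\lambda)$ (not in $L^\infty$), that Lemma \ref{AP-lem-vdc2} is invoked with the same joining $\lambda$ throughout, and that $\E(\widetilde G|\I(X^d,\lambda,\sigma_d))$ is indeed $\ZZ_{d-1}^{d}$-measurable --- this last fact is automatic because $\ZZ_{d-1}^{d}$ is $\sigma_d$-invariant (each $\ZZ_{d-1}$ is $T$-invariant), so conditioning a $\ZZ_{d-1}^{d}$-measurable function on the sub-$\sigma$-algebra $\I(X^d,\lambda,\sigma_d)\cap\ZZ_{d-1}^{d}$ keeps it $\ZZ_{d-1}^{d}$-measurable. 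I expect no real obstacle here; the substance of the proposition has already been carried out in Lemmas \ref{AP-lem-vdc} and \ref{AP-lem-vdc2}, and what remains is the standard passage from tensor products to general functions by density and continuity of conditional expectation.
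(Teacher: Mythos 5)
Your proof is correct and follows essentially the same route as the paper: approximate in $L^2(\lambda)$ by finite sums of tensor products, replace each $f_j$ by $\E(f_j|\ZZ_{d-1})$ via Lemma \ref{AP-lem-vdc2}, and use the $\sigma_d$-invariance of $\ZZ_{d-1}^{d}$ to see that the conditional expectation on $\I(X^d,\lambda,\sigma_d)$ of a $\ZZ_{d-1}^{d}$-measurable function stays $\ZZ_{d-1}^{d}$-measurable. The only step you state loosely (measurability of $\E(\widetilde G\,|\,\I)$) is made precise in the paper by writing $\E(\cdot\,|\,\I)=\lim_{N}\frac{1}{N}\sum_{n=0}^{N-1}(\cdot)\circ\sigma_d^n$ in $L^2(\lambda)$, each term being $\ZZ_{d-1}^{d}$-measurable by the invariance you invoke.
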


\begin{proof}
Every bounded function on $X^{d}$ which is measurable with respect
to \linebreak $\I(X^d, \lambda,\sigma_d)$ can be approximated in
$L^2(\lambda)$ by finite sums of functions of the form
$\E(\bigotimes_{j=1}^d f_j | \I( X^d, \X^d, \lambda, \sigma_d))$ where
$f_1,\ldots,f_d$ are bounded functions on $X$. By Lemma
\ref{AP-lem-vdc2}, one can assume that these functions are
measurable with respect to $Z_{d-1}$. In this case
$\bigotimes_{j=1}^df_j$ is measurable with respect to
$\ZZ_{d-1}^{d}$. Since this $\sigma$-algebra $\ZZ_{d-1}^{d}$ is
invariant under $\sigma_d$, $\E(\bigotimes_{j=1}^df_j|\I(X^d, \X^d,
\lambda,\sigma_d))=\lim \limits_{N\rightarrow +\infty}\frac{1}{N}\sum \limits_{n=0}^{N-1}
\big(\bigotimes_{j=1}^df_j\big)\circ \sigma_d^n$ is also measurable with respect to
$\ZZ_{d-1}^{d}$. Therefore $\I(X^d, \X^d, \lambda,\sigma_d)$ is
measurable with respect to $\ZZ_{d-1}^{d}$.
\end{proof}

Let $\pi: (X,\X,\mu,T)\rightarrow (Y,\Y,\nu,S)$ be a homomorphism. $\pi$ is {\em  ergodic}
or $(X,\X,\mu,T)$ is an {\em ergodic extension} of $(Y,\Y,\nu,S)$ if $T-$invariant sets of $\X$ is contained in $\Y$, i.e. $\I(T)\subseteq \Y$.

\begin{cor}\label{AP-cor-ergodic}
Let $(X, \X, \mu,T)$ be an ergodic system and $d\in \N$. Suppose that $\lambda$
is a $d$-fold self-joining of $X$ and it is $\sigma_d$-invariant. Then the
factor map $\pi_{d-1}^d: (X^{d}, \X^d, \lambda, \sigma_d)\rightarrow
(Z_{d-1}^d, \ZZ^d_{d-1}, \widetilde{\lambda},\sigma_d)$ is ergodic, where $\widetilde{\lambda}$ is the image of $\lambda$.

In particular, one has that $\I(X^{d}, \lambda, \sigma_d)$ is isomorphic to $\I(Z_{d-1}^d, \widetilde{\lambda},\sigma_d)$.
\end{cor}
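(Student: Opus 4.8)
The plan is to deduce Corollary \ref{AP-cor-ergodic} directly from the preceding Proposition, using only the standard measure-theoretic characterisation of an ergodic (i.e.\ relatively ergodic) extension. Recall that for a factor map $\pi: (W,\lambda,S)\to (V,\widetilde{\lambda},S)$ between measure preserving systems on Lebesgue spaces, with $\widetilde{\lambda}=\pi_*\lambda$, the map $\pi$ is ergodic precisely when
\[
\I(W,\lambda,S)\subseteq \pi^{-1}(\B(V)) \pmod{\lambda}.
\]
First I would record why this containment already yields the stronger conclusion that $\I(W,\lambda,S)=\pi^{-1}\big(\I(V,\widetilde{\lambda},S)\big)$ modulo $\lambda$-null sets: if $A\in\I(W,\lambda,S)$ then, by the containment, $A=\pi^{-1}(B)$ for some Borel $B\subseteq V$; since $\pi$ intertwines the transformations, $\pi^{-1}(B\triangle S^{-1}B)=\pi^{-1}(B)\triangle S^{-1}\pi^{-1}(B)$ is $\lambda$-null, hence $B\triangle S^{-1}B$ is $\widetilde{\lambda}$-null and $B\in\I(V,\widetilde{\lambda},S)$; the reverse inclusion $\pi^{-1}\big(\I(V,\widetilde{\lambda},S)\big)\subseteq\I(W,\lambda,S)$ is automatic. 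The map $A\mapsto \pi^{-1}(A)$ then furnishes the isomorphism of measure algebras.

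Next I would apply this with $W=(X^d,\lambda,\sigma_d)$, $V=(Z_{d-1}^d,\widetilde{\lambda},\sigma_d)$ and $\pi=\pi_{d-1}^d=\pi_{d-1}\times\cdots\times\pi_{d-1}$. The only bookkeeping point is the identification of $\pi^{-1}(\B(V))$: under the convention of the Preliminaries each factor is identified with the sub-$\sigma$-algebra it pulls back, so $\ZZ_{d-1}=\pi_{d-1}^{-1}(\B(Z_{d-1}))$, and since $Z_{d-1}$ is a standard Borel space one has $\B(Z_{d-1}^d)=\B(Z_{d-1})^{\otimes d}$; consequently $(\pi_{d-1}^d)^{-1}\big(\B(Z_{d-1}^d)\big)$ is exactly the product sub-$\sigma$-algebra $\ZZ_{d-1}^d=\ZZ_{d-1}\otimes\cdots\otimes\ZZ_{d-1}$ of $\X^d$. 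The preceding Proposition asserts precisely that $\I(X^d,\lambda,\sigma_d)$ is measurable with respect to $\ZZ_{d-1}^d$, which is the required containment. Hence $\pi_{d-1}^d$ is an ergodic factor map, and by the argument of the first paragraph $\I(X^d,\lambda,\sigma_d)$ coincides, modulo $\lambda$-null sets, with $(\pi_{d-1}^d)^{-1}\big(\I(Z_{d-1}^d,\widetilde{\lambda},\sigma_d)\big)$, giving the asserted isomorphism of the two invariant $\sigma$-algebras.

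I do not expect any genuine obstacle here: all the substance is in the Proposition (and, behind it, in Lemmas \ref{AP-lem-vdc} and \ref{AP-lem-vdc2}). The only mild care needed is the routine Lebesgue-space bookkeeping — that the product Borel structure on $Z_{d-1}^d$ is the product of the Borel structures of the factors, and that the ``modulo null sets'' identifications are carried consistently through the pullback by $\pi_{d-1}^d$ — which is standard for factors of ergodic systems on standard probability spaces.
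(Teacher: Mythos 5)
Your proposal is correct and follows essentially the same route as the paper, which states the corollary as an immediate consequence of the preceding proposition together with the definition of a relatively ergodic extension ($\I(\sigma_d)\subseteq \ZZ_{d-1}^{d}$ modulo null sets). Your additional bookkeeping — identifying $(\pi_{d-1}^d)^{-1}\bigl(\B(Z_{d-1}^d)\bigr)$ with $\ZZ_{d-1}^d$ and checking that invariant sets upstairs pull back from invariant sets downstairs — is exactly the routine verification the paper leaves implicit.
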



\subsection{Ergodic decomposition of Furstenberg self-joining of nilsystems under action \texorpdfstring{$\sigma_d$}{sigma d}}
\

In the previous subsection we show that to study the $\sigma$-algebra of invariant sets
under $\sigma_d=T\times T^2\times \ldots \times T^d$, we only need to study the one on its nilfactors.
Hence in this subsection we study the ergodic decomposition of Furstenberg self-joinings of nilsystems under
the action $ \sigma_d$.


In this subsection $d\ge 2$ is an integer, and $(X =
Z_{d-1}, \ZZ_{d-1}, \mu_{d-1}, T )$ is a topological system of order $d-1$. Recall
$$N_\ell=N_\ell(X)=\overline{\O(\Delta_\ell({X}),
\sigma_\ell)}=\overline{\O((x,\ldots,x), \langle\tau_\ell, \sigma_\ell\rangle)}\subset
X^\ell$$ and $$N_\ell[x]:=\overline{\O((x,\ldots,x), \sigma'_\ell)}=\{x\}\times \overline{\O((x,\ldots,x), \sigma_{\ell-1})},$$ where
$\ell\ge 2$ and $x\in X$.

\medskip

\subsubsection{Basic properties}

First we recall some basic properties.

\begin{thm}\cite{BHK05, Z05}\label{ziegler}
With the notations above, we have
\begin{enumerate}
  \item (Theorem \ref{PL}) The $(N_d, \langle\tau_d,\sigma_d\rangle)$ is ergodic (and thus uniquely
  ergodic) with the Furstenberg self-joining $\mu^{(d)}_{d-1}$.
  \item (Theorem \ref{PL}) For each $x\in X$, the system $(N_d[x],\sigma_d')$ is
uniquely ergodic with some measure $\delta_x\times \mu^{(d)}_{d-1,x}$.
  \item \cite[Lemma 5.3]{BHK05} $\displaystyle \mu^{(d)}_{d-1} = \int_X \d_x\times \mu^{(d)}_{d-1,x}\
  d\mu_{d-1}(x)$.
  \item (Ziegler) Let $f_1, f_2, \ldots , f_{d-1}$ be continuous functions on
$X$ and let $\{M_i\}$ and $\{N_i\}$ be two sequences of integers
such that $N_i\to \infty$. For $\mu_{d-1}$-almost every $x\in X$,
\begin{equation}
\begin{split}
\frac{1}{N_i} \sum_{n=M_i}^{N_i+M_i-1}
   &f_1(T^nx)f_2(T^{2n}x)\ldots f_{d-1}(T^{(d-1)n}x)\\
   \rightarrow & \int f_1(x_1)f_2(x_2)\ldots f_{d-1}(x_{d-1})\
   d\mu^{(d)}_{d-1,x}(x_1,x_2,\ldots,x_{d-1})
\end{split}
\end{equation}
as $i\to \infty$.
\end{enumerate}
\end{thm}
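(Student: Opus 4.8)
The statement to prove is Theorem~\ref{ziegler}, collecting basic properties of the Furstenberg self-joining $\mu^{(d)}_{d-1}$ of a topological system of order $d-1$ and the disintegration of its diagonal measure. My plan is to treat parts (1)--(3) as a package, derived from the structure theory of nilsystems and inverse limits, and then to deduce (4) from (2) by a pointwise ergodic theorem combined with a density argument; this order is forced because the existence of the conditional measures $\mu^{(d)}_{d-1,x}$ in (4) is exactly the content of (2).

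\medskip

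\textbf{Parts (1)--(3).} First I would reduce to the case of a single $(d-1)$-step nilsystem $X = G/\Gamma$. Since $X = Z_{d-1}$ is by definition an inverse limit of $(d-1)$-step minimal nilsystems $X = \lim_{\leftarrow} X_k$, and the functor $X \mapsto N_d(X)$ together with the joining $\mu^{(d)}_{d-1}$ commutes with inverse limits (the $N_d(X_k)$ with their unique measures form an inverse system whose limit is $N_d(X)$ with a measure projecting correctly onto each level), unique ergodicity of $(N_d(X),\langle\tau_d,\sigma_d\rangle)$ follows once it is known at each finite level. So assume $X=G/\Gamma$. Here $N_d(X)$ is itself a sub-nilmanifold of $X^d$: the group $\langle\tau_d,\sigma_d\rangle$ acts via elements of $(G/\Gamma)^d$ coming from the closed subgroup $G^* \le G^d$ generated by the diagonal $\{(g,\dots,g)\}$ and by $\{(g,g^2,\dots,g^d)\}$, and $\overline{\O(\Delta_d(X),\sigma_d)}$ is the closed $G^*$-orbit of $e\Gamma^d$, hence a homogeneous space $G^*/(G^*\cap\Gamma^d)$. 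The key input is then the classical theorem (Leibman, Parry, following Furstenberg) that on a nilmanifold the closure of a one-parameter (or cyclic) orbit is a sub-nilmanifold on which the corresponding action is uniquely ergodic with Haar measure. Applying this to the $\langle\tau_d,\sigma_d\rangle$-action on $N_d$ gives (1), with $\mu^{(d)}_{d-1}$ the Haar measure of $N_d$; applying it to the $\sigma'_d=\id\times\sigma_{d-1}$-action on $N_d[x]=\overline{\O((x,\dots,x),\sigma'_d)}$, which is likewise a translate of a sub-nilmanifold of $X^d$, gives (2), with the unique measure supported on $\{x\}\times X^{d-1}$, hence of the stated form $\delta_x\times\mu^{(d)}_{d-1,x}$. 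For (3): $\tau_d$ maps $N_d[x]$ to $N_d[Tx]$ and normalizes the situation, so $x\mapsto \mu^{(d)}_{d-1,x}$ is (Borel) equivariant, the measure $\int_X \delta_x\times\mu^{(d)}_{d-1,x}\,d\mu(x)$ is $\langle\tau_d,\sigma_d\rangle$-invariant and supported on $N_d$, and by uniqueness from (1) it must equal $\mu^{(d)}_{d-1}$.

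\medskip

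\textbf{Part (4) (Ziegler's pointwise statement).} Fix continuous $f_1,\dots,f_{d-1}$ on $X$. On the uniquely ergodic system $(N_d[x],\sigma'_d)$ with invariant measure $\delta_x\times\mu^{(d)}_{d-1,x}$, unique ergodicity gives \emph{uniform} convergence of Birkhoff averages of the continuous function $F=f_1\otimes\cdots\otimes f_{d-1}$ (viewed on the last $d-1$ coordinates): for every starting point, and in particular for the base point $(x,x,\dots,x)$, the averages $\frac1N\sum_{n=M}^{M+N-1}(\sigma'_d)^n$ applied to $F$ at $(x,\dots,x)$ converge, uniformly in $M$, to $\int F\,d(\delta_x\times\mu^{(d)}_{d-1,x}) = \int f_1\otimes\cdots\otimes f_{d-1}\,d\mu^{(d)}_{d-1,x}$. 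Evaluating at $(x,\dots,x)$ turns the left side into exactly $\frac1{N}\sum_{n=M}^{M+N-1} f_1(T^nx)\cdots f_{d-1}(T^{(d-1)n}x)$, which is independent of $x$ being a "typical" point --- it holds for \emph{every} $x\in X$, hence certainly $\mu_{d-1}$-a.e., and the convergence along any sequences $M_i, N_i$ with $N_i\to\infty$ follows from the uniformity in $M$. Strictly speaking one should note that in the inverse-limit case $f_j$ need only be approximated (in $C(X)$, hence in $L^\infty$) by functions factoring through a finite level $X_k$, for which the above applies verbatim; passing to the limit is routine since all the measures involved are probability measures.

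\medskip

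\textbf{Main obstacle.} The genuinely nontrivial point is the unique-ergodicity input for parts (1) and (2): identifying $N_d$ and $N_d[x]$ as sub-nilmanifolds and invoking the Furstenberg--Parry--Leibman theory of orbit closures on nilmanifolds. Everything else --- the inverse-limit reduction, the disintegration (3), and the descent from uniform convergence to the pointwise statement (4) --- is soft. In the paper this is precisely why (1)--(3) are \emph{cited} to \cite{BHK05, Z05} and (4) is attributed to Ziegler: the substantive work lives in those references, and the role of Theorem~\ref{ziegler} here is to package those facts in the form needed for the ergodic-decomposition results that follow.
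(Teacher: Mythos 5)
The paper gives no proof of this theorem at all---it is quoted from \cite{BHK05,Z05}, with only the remark that the nilsystem case extends to topological systems of order $d-1$ by an inverse-limit argument---and your outline (reduce to a single nilsystem, realize $N_d$ and $N_d[x]$ as sub-nilmanifold orbit closures that are uniquely ergodic, get (3) by invariance plus uniqueness, and get (4) from the uniform convergence of Birkhoff averages on the uniquely ergodic systems $(N_d[x],\sigma_d')$ evaluated at the diagonal point, uniformly in the starting time $M$) is precisely the route taken in those references, so it is consistent with the paper's treatment. The only loose point is your description of $N_d$ as a homogeneous space of the group generated by \emph{all} elements $(g,\ldots,g)$ and $(g,g^2,\ldots,g^d)$: what is actually needed, and what the Parry--Leibman orbit-closure theory (Theorem \ref{nilsystem}(2) together with the unique-ergodicity theorem for commuting translations quoted in the appendix) supplies, is that the closed orbit of the diagonal point under the two specific translations $(t,\ldots,t)$ and $(t,t^2,\ldots,t^d)$ is a sub-nilmanifold on which this pair acts uniquely ergodically.
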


\begin{rem}
In fact, in \cite{BHK05, Z05} Theorem \ref{ziegler} is for nilsystems. Via an inverse limit argument,
it is easy to see that Theorem \ref{ziegler} holds for topological systems of order $d$.
\end{rem}

\subsubsection{The ergodic decomposition of $\mu^{(d)}_{d-1}$ under $\sigma_d$}

Now we study the ergodic decomposition of $\mu^{(d)}_{d-1}$ under
$\sigma_d$. By Theorem \ref{PL}, for each $x\in X$, let $\nu^{(d)}_{d-1,x}$ be the unique
$\sigma_d$-invariant measure on $\overline{\O(x^d,\sigma_d)}$, where
$x^d=(x,x,\ldots,x)\in X^d$. Then
$$\varphi: X \longrightarrow
M(N_d);\ \ \ x\mapsto \nu^{(d)}_{d-1,x}$$ is a Borel map and $\varphi(X)\subseteq M_{\sigma_d}^{e}(N_d)$. This
fact follows from that $x \mapsto
\frac{1}{N}\sum_{n<N}\d_{\sigma_d^n x^d}$ is continuous and
$\frac{1}{N}\sum_{n<N}\d_{\sigma_d^nx^d}$ converges to
$\nu^{(d)}_{d-1,x}$ weakly.

It is easy to check that $\displaystyle \int_{X}
\nu^{(d)}_{d-1,x} \ d \mu_{d-1}(x)$ is $\langle\tau_d,\sigma_d\rangle$-invariant and
hence it is equal to $\mu^{(d)}_{d-1}$ by the uniqueness. Hence we have
\begin{equation}\label{h9}
    \mu^{(d)}_{d-1}=\int_{X} \nu^{(d)}_{d-1,x} \ d \mu_{d-1}(x).
\end{equation}

\medskip

Now we will prove the following result:

\begin{thm}\label{de-Zd}
$\mu^{(d)}_{d-1}=\int_{X} \nu^{(d)}_{d-1,x} \ d \mu_{d-1}(x)$ is the ergodic decomposition of
$\mu^{(d)}_{d-1}$ under $\sigma_d$.
\end{thm}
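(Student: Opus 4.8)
We already know from~\eqref{h9} that $\mu^{(d)}_{d-1}=\int_X \nu^{(d)}_{d-1,x}\,d\mu(x)$, and that each $\nu^{(d)}_{d-1,x}$ is $\sigma_d$-ergodic (this is recorded right before the statement, via the map $\varphi$ taking values in $M^e_{\sigma_d}(N_d)$). So the only thing to prove is that this particular integral representation is \emph{the} ergodic decomposition, equivalently that the map $x\mapsto \nu^{(d)}_{d-1,x}$ is, up to $\mu$-null sets, injective after passing to the appropriate factor --- i.e.\ that the $\sigma$-algebra of $\sigma_d$-invariant sets in $(X^d,\mu^{(d)}_{d-1})$ coincides (mod $\mu^{(d)}_{d-1}$) with the pullback under $\varphi$ of the Borel $\sigma$-algebra on $M^e_{\sigma_d}(N_d)$. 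The plan is to show directly that $\I(X^d,\mu^{(d)}_{d-1},\sigma_d)$ is generated, mod null sets, by $\pi_1^{-1}(\ZZ)$, where $\pi_1:X^d\to X$ is the projection to the first coordinate; once this is established, the decomposition over $x\in X$ is exactly the ergodic decomposition because distinct $x$ (mod $\mu$) give mutually singular $\nu^{(d)}_{d-1,x}$ (their supports $\overline{\O(x^d,\sigma_d)}$ project onto $\overline{\O(x,T)}=X$ but are fibered consistently over the first coordinate).

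First I would identify, using part (2)--(3) of Theorem~\ref{ziegler}, that $\nu^{(d)}_{d-1,x}$ is supported on $\overline{\O(x^d,\sigma_d)}\subseteq N_d[x]=\{x\}\times(\text{something})\cdots$; more precisely the projection of $\nu^{(d)}_{d-1,x}$ to the first coordinate is a $T$-invariant measure on $\overline{\O(x,T)}$, and since $(X,T)$ is uniquely ergodic (being a topological system of order $d-1$, hence minimal and uniquely ergodic) this first-coordinate marginal is $\mu$ itself --- so I cannot separate the $\nu^{(d)}_{d-1,x}$ by the first coordinate marginal alone. Instead, the separation must come from the $\sigma_d$-ergodic averages: by Theorem~\ref{ziegler}(4) (Ziegler's pointwise theorem), for $\mu_{d-1}$-a.e.\ $x$ and any continuous $f_1,\dots,f_{d-1}$, the Cesàro averages $\frac1N\sum_{n<N}\prod_j f_j(T^{jn}x)$ converge to $\int \prod_j f_j(x_j)\,d\mu^{(d)}_{d-1,x}$; reading this along $X^d$ against the measure $\mu^{(d)}_{d-1}$, the limit of $\frac1N\sum_{n<N}\bigotimes_j f_j\circ\sigma_d^n$ is a function that is measurable with respect to $\pi_1^{-1}(\X)$ (it depends only on the first coordinate through $x$). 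By the mean ergodic theorem these limits also equal $\E(\bigotimes f_j\mid\I(X^d,\mu^{(d)}_{d-1},\sigma_d))$, and such functions are dense in $L^2$ of the invariant $\sigma$-algebra. Hence $\I(X^d,\mu^{(d)}_{d-1},\sigma_d)\subseteq \pi_1^{-1}(\X)$ mod null sets; the reverse inclusion is trivial because $\pi_1^{-1}(\I(X,\mu,T))=\pi_1^{-1}(\X)$ mod null sets by ergodicity of $(X,\mu,T)$, and $\pi_1^{-1}(\I(X,T))\subseteq\I(X^d,\sigma_d)$ since $\sigma_d$ acts on the first coordinate by $T$. Thus $\I(X^d,\mu^{(d)}_{d-1},\sigma_d)=\pi_1^{-1}(\X)$ mod $\mu^{(d)}_{d-1}$, and the disintegration of $\mu^{(d)}_{d-1}$ over this $\sigma$-algebra is precisely $\int_X \nu^{(d)}_{d-1,x}\,d\mu(x)$ --- which is therefore the ergodic decomposition.

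The main obstacle I anticipate is making the Ziegler pointwise-convergence input interact cleanly with the $L^2$ mean ergodic theorem over the joining $\mu^{(d)}_{d-1}$: one must check that the pointwise limit on $X$ (in Theorem~\ref{ziegler}(4)) genuinely computes the conditional expectation $\E(\bigotimes_j f_j\mid \I(X^d,\mu^{(d)}_{d-1},\sigma_d))$ and not merely the conditional expectation over a smaller $\sigma$-algebra; this requires knowing that $\nu^{(d)}_{d-1,x}=\d_x\times\mu^{(d)}_{d-1,x}$ is itself $\sigma_d$-ergodic (so that on its support the time averages equal the space integral $\int\prod_j f_j\,d\mu^{(d)}_{d-1,x}$ with no further averaging), which is exactly the content of the sentence preceding the theorem. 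A secondary technical point is the passage from continuous $f_j$ to the generating claim for the full invariant $\sigma$-algebra: functions of the form $\E(\bigotimes_j f_j\mid\I)$ with $f_j$ continuous span an $L^2$-dense subspace of $L^2(\I(X^d,\mu^{(d)}_{d-1},\sigma_d))$ because products $\bigotimes_j f_j$ with $f_j\in C(X)$ span a dense subspace of $L^2(X^d,\mu^{(d)}_{d-1})$ and conditional expectation is a contraction; this is routine but should be stated. Once these two points are in place the theorem follows.
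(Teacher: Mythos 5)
Your plan has a genuine gap: the central claim, that $\I(X^d,\mu^{(d)}_{d-1},\sigma_d)$ coincides mod null sets with the first-coordinate $\sigma$-algebra $\pi_1^{-1}(\X)$, is false, and the argument collapses with it. Test the simplest case $d=2$ with $X=Z_1$ an irrational rotation by $\alpha$: then $N_2=X\times X$, $\mu^{(2)}_1=\mu\times\mu$, and a character $(x_1,x_2)\mapsto e^{2\pi i(mx_1+nx_2)}$ is $\sigma_2$-invariant iff $(m+2n)\alpha\in\Z$ iff $m+2n=0$, so $\I(X^2,\mu\times\mu,\sigma_2)$ is generated by $x_2-2x_1$; it contains no nonconstant function of $x_1$ alone, and no nonconstant function of $x_2-2x_1$ is measurable with respect to $\pi_1^{-1}(\X)$ mod $\mu\times\mu$, so \emph{neither} of your two inclusions holds. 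Your ``trivial reverse inclusion'' inverts ergodicity: ergodicity of $(X,\mu,T)$ makes $\I(X,\mu,T)$ trivial, hence $\pi_1^{-1}(\I(X,T))$ is the trivial algebra rather than $\pi_1^{-1}(\X)$; and since $\sigma_d$ moves the first coordinate by $T$, a set $\pi_1^{-1}(A)$ is $\sigma_d$-invariant only when $A$ is $T$-invariant, i.e.\ trivial. The forward inclusion misuses Theorem \ref{ziegler}(4): that statement gives convergence at diagonal points $(x,\ldots,x)$ for $\mu$-a.e.\ $x$, and the diagonal is $\mu^{(d)}_{d-1}$-null in general (it is in the rotation example), so it says nothing about ergodic averages at $\mu^{(d)}_{d-1}$-a.e.\ point of $N_d$. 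Underlying all this is a conflation of $\sigma_d=T\times T^2\times\cdots\times T^d$ with $\sigma_d'=\id\times T\times\cdots\times T^{d-1}$: the measures fibered over the first coordinate are the $\sigma_d'$-ergodic measures $\d_x\times\mu^{(d)}_{d-1,x}$ of Theorem \ref{ziegler}(2), whereas $\nu^{(d)}_{d-1,x}$ is $\sigma_d$-invariant with first marginal equal to $\mu$ (as you yourself note), so it is not of the form $\d_x\times(\cdot)$ and is not determined by the first coordinate.

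The real content of the theorem, which your sketch bypasses, is the separation of the candidate components: one must show that $x\mapsto\nu^{(d)}_{d-1,x}$ is injective, so that $(X,\mu)$ genuinely parametrizes the decomposition. This is exactly where the hypothesis that $X$ is a topological system of order $d-1$ enters in the paper: if $\overline{\O(x^d,\sigma_d)}\cap\overline{\O(y^d,\sigma_d)}\neq\emptyset$, then by minimality $y^d\in\overline{\O(x^d,\sigma_d)}$, which forces $(x,y)\in\RP^{[d-1]}(X)$ and hence $x=y$ by Theorem \ref{thm-SY}. Combining this injectivity with the facts you correctly isolate (each $\nu^{(d)}_{d-1,x}$ is ergodic, and \eqref{h9} holds), the paper then invokes Souslin's theorem, the ergodic decomposition theorem of Varadarajan and the uniqueness part of Choquet's theorem to identify the decomposition map $\psi$ and to show $\I(N_d,\mu^{(d)}_{d-1},\sigma_d)=\psi^{-1}(\X)$ --- not $\pi_1^{-1}(\X)$. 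If you wish to keep your strategy of identifying the invariant $\sigma$-algebra directly, the algebra you must aim for is $\psi^{-1}(\X)$ (equivalently $\varphi^{-1}$ of the Borel structure on $M^e_{\sigma_d}(N_d)$), which is precisely Proposition \ref{Zd-erg-decom}(1).
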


First we have the following claim:

\medskip

\noindent {\bf Claim.} \ {\em There exists a continuous map $\psi: N_d\rightarrow X$ such that
$$x_1=\psi(x_2,x_3,\cdots,x_{d+1})$$ for every $(x_1,x_2,x_3,\cdots,x_{d+1})\in N_{d+1}$.}

\medskip

\noindent {\em Proof of Claim.}
The claim follows from the following fact:
the projection $$ p_2: N_{d+1}(X)\rightarrow N_d(X);\ \  (x_1,x_2,\cdots,x_{d+1})\mapsto (x_2,\cdots,x_{d+1})$$
is a bijection.

By definition, it is clear that $p_2$ is onto. Now we show that $p_2$ is also injective. Let $(x_1,x_2,\cdots,x_{d+1}), (y_1,x_2,\cdots,x_{d+1})\in N_{d+1}$. We will show that $x_1=y_1$.
First by definition of $N_{d+1}(X)$,  there exists $(x^*,y^*)\in \overline{\O((x_1,y_1),T\times T)}$ and $x\in X$ such that
$$ (x^*,x,\cdots,x), (y^*,x,\cdots,x)\in N_{d+1}.$$
Thus for  for any $\d>0$, there is some $n,m\in \Z$ such that $\rho(T^{m+jn}x, x)<\d/2$ for all $j=1,2,\ldots, d$ and $\rho(T^{m}x, x^*)<\d/2$.
Let $x'=T^m x, y'=T^{m+n}x$, and let ${\bf n}=(n,n,\ldots,n)\in \mathbb{Z}^{d-1}$. Then $\rho(x',x^*)<\d/2$, $\rho(y',x)<\d/2$ and
$$\{{\bf n}\cdot \ep: \ep\in \{0,1\}^{d-1}\setminus \{(0,0,\ldots,0)\} \}=\{n,2n,\ldots,(d-1)n\}.$$
So we have that
\begin{equation*}
  \begin{split}
  & \rho(T^{{\bf n}\cdot \ep}x',T^{{\bf n}\cdot \ep}y') = \rho(T^{{\bf n}\cdot \ep}T^m x,T^{{\bf n}\cdot \ep}
T^{n+m}x)\\ &  \le \rho(T^{{\bf n}\cdot \ep}T^mx,x)+\rho(x,T^{{\bf n}\cdot \ep}
T^{n+m}x)\\ & \le 2 \max_{1\le j\le d} \rho(T^{jn+m}x, x) = \d .
  \end{split}
\end{equation*}
By the definition of $\RP^{[d-1]}(X)$, one has that $(x^*,x)\in \RP^{[d-1]}(X)$. Hence $x^*=x $ by Theorem \ref{thm-SY}.
Similarly, one has that $y^*=x$. Thus $x^*=y^*$ and so $x_1=y_1$ since $(X,T)$ is distal. This shows that $p_2$ is injective. The proof of Claim is completed. \hfill $\square$


\medskip


Now we show that $\varphi: X \longrightarrow
M(N_d);\  x\mapsto \nu^{(d)}_{d-1,x}$ is one-to-one.
Since $\psi^{-1}(x)\supset \overline{\O((x^d,\sigma_d)}$ for any $x\in X$, one has that
$$\overline{\O((x^d,\sigma_d)}\cap \overline{\O((y^d,\sigma_d)}=\emptyset$$
whenever $x\neq y$. Thus $\nu_{d-1,x}^{(d)}\neq \nu_{d-1,y}^{(d)}$ whenever $x\neq y$.

By the above discussion, one has that $\varphi$ is a one-to-one Borel map. Hence by Souslin Theorem (See e.g.  \cite[Theorem 2.8 (2)]{Glasner}),
$\varphi(X)$ is a Borel subset of $M(N_d)$ and $\varphi$ is a Borel isomorphism from $X$ to $\varphi(X)$.
Let $\kappa=\varphi_*(\mu_{d-1})$. Then $\kappa$ is a Borel probability measure on the $G_\delta$ subset
$M_{\sigma_d}^{e}(N_d)$ of $M(N_d)$, $\kappa(\varphi(X))=1$ and
$$\mu_{d-1}^{(d)}\stackrel{\eqref{h9}}{=}\int_{X} \varphi(x) \ d \mu_{d-1}(x)=\int_{M_{\sigma_d}^{e}(N_d)} \theta d \kappa(\theta)$$
is the ergodic decomposition of $\mu_{d-1}^{(d)}$ under $\sigma_d$.

Let $\mathcal{N}_d$ be  the Borel $\sigma$-algebra  of $N_d$.
By the ergodic decomposition Theorem (see e.g. \cite[Theorem 4.2]{V}), there exists a Borel map $\xi: N_d\rightarrow M_{\sigma_d}^{e}(N_d)$
such that

(i) $\xi(\sigma_d x)=\xi(x)$ for any $x\in N_d$,

(ii) for any $\theta\in M_{\sigma_d}^{e}(N_d)$, $\theta(\xi^{-1}(\theta))=1$,

(iii) for any $\eta\in M_{\sigma_d}(N_d)$,
\begin{equation}\label{eeeee11}
\eta(A)=\int_{N_d} \xi(x)(A)d\eta(x)
\end{equation}
for any $A\in \mathcal{N}_d$.

Let $\mathcal{M}_{\sigma_d}^{e}(N_d)$ be the Borel $\sigma$-algebra  of $M_{\sigma_d}^{e}(N_d)$ and
$\nu=\xi_*(\mu_{d-1}^{(d)})$. Then
$$\xi: (N_d,\mathcal{N}_d, \mu_{d-1}^{(d)})\rightarrow (M_{\sigma_d}^{e}(N_d),
\mathcal{M}_{\sigma_d}^{e}(N_d),\nu)$$
is a measure-preserving map. By \cite[Lemma 4.2]{V},
$\xi^{-1}(\mathcal{M}_{\sigma_d}^{e}(N_d))=\I(N_d,\mathcal{N}_d , \mu_{d-1}^{(d)},\sigma_d)$ (mod  $\mu_{d-1}^{(d)})$
and  $$\mu_{d-1}^{(d)}=\int_{N_d} \xi(x)\ d\mu_{d-1}^{(d)}(x)=\int_{M_{\sigma_d}^{e}(N_d)} \theta d \nu(\theta)$$ is
the disintegration of $\mu_{d-1}^{(d)}$ over $\nu$ by $\xi$.

Hence the uniqueness of the representation in Choquet's theorem implies $\nu=\kappa$ so that $\nu(\varphi(X))=1$.
Now $$\varphi^{-1}:(M_{\sigma_d}^{e}(N_d),\mathcal{M}_{\sigma_d}^{e}(N_d),\nu)\rightarrow (X,\mathcal{X},\mu_{d-1})$$ is an isomorphism.

\medskip

Let $E=\{ (x_2,x_3,\cdots,x_{d+1})\in N_d: \psi(x_2,x_3,\cdots,x_{d+1})=\phi^{-1}\circ \xi(x_2,x_3,\cdots,x_{d+1})\}$. Then
$E$ is a Borel susbet of $N_d$. Now for any $x\in X$, it is not  hard to see that
$$\overline{\O((x^d,\sigma_d)}\cap \xi^{-1}(\phi(x))\subset E$$
and so
$$\phi(x)(E) \ge \phi(x)\big(\overline{\O((x^d,\sigma_d)}\cap \xi^{-1}(\phi(x))\big)=\phi(x)\big( \xi^{-1}(\phi(x))\big)\stackrel{\text{(ii)}}{=}1.$$
Moreover
$$\mu_{d-1}^{(d)}\big(E \big)\stackrel{\eqref{h9}}{=}\int_{X} \varphi(x)\big( E \big) \ d \mu_{d-1}(x)=1.$$
This implies that $\psi=\varphi^{-1}\circ \xi$ for $\mu_{d-1}^{(d)}$-a.e.
\medskip

To sum up, we have

\begin{prop}\label{Zd-erg-decom}
Let $\psi: N_d\rightarrow X$ be the continuous map such that
$x_1=\psi(x_2,x_3,\cdots,x_{d+1})$ for every $(x_1,x_2,x_3,\cdots,x_{d+1})\in N_{d+1}$. Then $\psi: (N_d,\mathcal{N}_d, \mu_{d-1}^{(d)})
\rightarrow (X,\mathcal{X},\mu_{d-1})$ is a measure preserving map such that
\begin{enumerate}
\item $\psi^{-1}(\mathcal{X})=\I(N_d,\mathcal{N}_d , \mu_{d-1}^{(d)},\sigma_d)$ (mod  $\mu_{d-1}^{(d)})$.

\item  the disintegration $\mu^{(d)}_{d-1}= \int_X (\mu_{d-1}^{(d)})_x d\mu_{d-1}(x)$ of $\mu_{d-1}^{(d)}$ over $\mu_{d-1}$ by $\psi$ is the
ergodic decomposition of $\mu^{(d)}_{d-1}$ under $\sigma_d$.

\item  $(\mu_{d-1}^{(d)})_x=\nu^{(d)}_{d-1,x}$ for $\mu_{d-1}$-a.e $x\in X$.
\end{enumerate}
\end{prop}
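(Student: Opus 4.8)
The plan is to synthesize the facts established in the discussion preceding the statement: all three assertions follow by transporting, through the Borel isomorphism $\varphi^{-1}$, the ergodic decomposition of $\mu_{d-1}^{(d)}$ that the map $\xi$ already realizes on its natural parameter space $M_{\sigma_d}^{e}(N_d)$.

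First I would verify that $\psi=\varphi^{-1}\circ\xi$ is measure preserving. The map $\xi$ pushes $\mu_{d-1}^{(d)}$ forward to $\nu=\xi_*(\mu_{d-1}^{(d)})$; by the uniqueness of the Choquet representation we have $\nu=\kappa=\varphi_*(\mu_{d-1})$, and since $\nu(\varphi(X))=1$ the Borel isomorphism $\varphi^{-1}$ is defined $\nu$-a.e.\ and pushes $\nu$ back to $\mu_{d-1}$. Composing, $\psi_*(\mu_{d-1}^{(d)})=\mu_{d-1}$. For item (1): since $\varphi$ is a Borel isomorphism of $(X,\mathcal{X})$ onto $\varphi(X)$ (Souslin's theorem) and $\nu(\varphi(X))=1$, the $\sigma$-algebra $(\varphi^{-1})^{-1}(\mathcal{X})$ equals $\mathcal{M}_{\sigma_d}^{e}(N_d)$ modulo $\nu$. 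Hence
\begin{equation*}
\psi^{-1}(\mathcal{X})=\xi^{-1}\big((\varphi^{-1})^{-1}(\mathcal{X})\big)=\xi^{-1}\big(\mathcal{M}_{\sigma_d}^{e}(N_d)\big)=\I(N_d,\mathcal{N}_d,\mu_{d-1}^{(d)},\sigma_d)
\end{equation*}
modulo $\mu_{d-1}^{(d)}$, where the last equality is \cite[Lemma 4.2]{V}.

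For items (3) and (2): by the construction of $\xi$, the disintegration of $\mu_{d-1}^{(d)}$ over $\nu$ by $\xi$ is $\mu_{d-1}^{(d)}=\int\theta\,d\nu(\theta)$, i.e.\ the conditional measure along the fibre $\xi^{-1}(\theta)$ is $\theta$ itself. Transporting through $\varphi^{-1}$, which sends $\theta=\varphi(x)$ to $x$ and $\nu$ to $\mu_{d-1}$, shows that the conditional measure of the disintegration of $\mu_{d-1}^{(d)}$ over $\mu_{d-1}$ by $\psi$ at the point $x$ is $\varphi(x)=\nu_{d-1,x}^{(d)}$ for $\mu_{d-1}$-a.e.\ $x$; this is (3). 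Claim (2) then follows from (1), since disintegrating over $\psi^{-1}(\mathcal{X})=\I(N_d,\mathcal{N}_d,\mu_{d-1}^{(d)},\sigma_d)$ is, by definition, the ergodic decomposition of $\mu_{d-1}^{(d)}$ under $\sigma_d$; alternatively it is immediate from \eqref{h9} together with the already-proved facts that each $\nu_{d-1,x}^{(d)}$ is $\sigma_d$-ergodic and that $x\mapsto\nu_{d-1,x}^{(d)}$ is injective, so distinct values of $x$ give mutually singular ergodic components.

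I do not expect a genuine obstacle: this is a short bookkeeping argument. The only points that require care — already handled in the preceding paragraphs — are the identity $\nu=\kappa$ coming from Choquet uniqueness, which is what makes $\varphi^{-1}$ both $\nu$-a.e.\ defined and measure preserving, and the fact that $\varphi$ is an honest Borel isomorphism, which is what lets $\varphi^{-1}$ carry $\mathcal{X}$ onto $\mathcal{M}_{\sigma_d}^{e}(N_d)$ modulo $\nu$ and thereby yields item (1).
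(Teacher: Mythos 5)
Your proposal is correct and follows essentially the same route as the paper, whose proof of this proposition is precisely the preceding construction: Souslin's theorem making $\varphi$ a Borel isomorphism onto $\varphi(X)$, Choquet uniqueness giving $\nu=\kappa$ (hence $\psi$ is defined a.e.\ and measure preserving), and \cite[Lemma 4.2]{V} identifying $\xi^{-1}(\mathcal{M}_{\sigma_d}^{e}(N_d))$ with $\I(N_d,\mathcal{N}_d,\mu_{d-1}^{(d)},\sigma_d)$, from which (1)--(3) are read off exactly as you do.
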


\begin{rem}
Note that $N_{d+1}[x]=\{x\}\times \overline{\O(x^d,\sigma_d)}$. It follows that for all $x$
$$\mu^{(d+1)}_{d-1,x}=\nu^{(d)}_{d-1,x}.$$ Hence
$$(\mu_{d-1}^{(d)})_x=\nu^{(d)}_{d-1,x}=\mu^{(d+1)}_{d-1,x}$$ for $\mu_{d-1}$-a.e $x\in X$.
Thus usually, $(\mu_{d-1}^{(d)})_x$ is different from $\mu^{(d)}_{d-1,x}$.
\end{rem}

It is easy to see that Theorem \ref{de-Zd} follows from Proposition \ref{Zd-erg-decom}.

\subsection{Ergodic decomposition of Furstenberg self-joining under the action $ \sigma_d$}
\

Let $(X,T)$ be a minimal t.d.s with measure $\mu$, and let $\pi_{d-1}: (X,T)\rightarrow (Z_{d-1}, T)$
be the topological factor map, where $Z_{d-1}$ is both a topological system of order $d-1$
and a system of order $d-1$ with measure $\mu_{d-1}$. Notice that in the next section we will show that for each ergodic system,
one always can find such a minimal topological model.

\medskip

Recall that for a t.d.s. $(X,T)$ and $d\in\N$, we define $\mu^{(d)}$ as  the weak$^*$ limit points
of sequence $\{\frac {1}{N} \sum_{n=0}^{N-1} \sigma_d^n \mu_\D^d\}$ in $M(X^d)$.
Then $\mu^{(d)}$ is a $d$-fold self-joining of $X$ and it is $\langle\tau_d,\sigma_d\rangle$-invariant.
By definition, the image of $\mu^{(d)}$ under $\pi_{d-1}^d$ is $\mu_{d-1}^{(d)}$.

\medskip

By Corollary \ref{AP-cor-ergodic}, the factor map $\pi_{d-1}^d:
(X^{d}, \X^d, \mu^{(d)},\sigma_d)\rightarrow (Z_{d-1}^d,\ZZ_{d-1}^d
\mu_{d-1}^{(d)},\sigma_d)$ is ergodic. Hence $\I(X^d,\X^d,
\mu^{(d)},\sigma_d)=\I(Z_{d-1}^d, \ZZ_{d-1}^d, \mu_{d-1}^{(d)},\sigma_d)$. By
(\ref{h9}),
$$\displaystyle \mu^{(d)}_{d-1} = \int_{Z_{d-1}} \nu^{(d)}_{d-1,x}\ d \mu_{d-1}(x) $$
is the ergodic decomposition of $\mu^{(d)}_{d-1}$ under $\sigma_d$.

Let $\phi=\pi_{d-1}^d|_{N_d(X)}$ and $$\psi: (N_d(Z_{d-1}),\mathcal{N}_d(Z_{d-1}),
\mu_{d-1}^{(d)})\rightarrow (Z_{d-1},\mathcal{Z}_{d-1},\mu_{d-1})$$ be the
measure-preserving map defined in Proposition \ref{Zd-erg-decom}. Then by  Corollary \ref{AP-cor-ergodic} and the fact $\mu^{(d)}\big(N_d(X)\big)=1$ and $\mu_{d-1}^{(d)}\big(N_d(Z_{d-1})\big)=1$, one has $$\phi: (N_d(X), \mathcal{N}_d(X),\mu^{(d)},\sigma_d)\rightarrow (N_d(Z_{d-1}),\mathcal{N}_d(Z_{d-1}),\mu_{d-1}^{(d)},\sigma_d) $$ is
a factor map with
\begin{equation}\label{ddbb-d}
\I(N_d(X), \mathcal{N}_d(X), \mu^{(d)},\sigma_d)= \phi^{-1}\I(N_d(Z_{d-1}),\mathcal{N}_d(Z_{d-1}), \mu_{d-1}^{(d)},\sigma_d).
\end{equation}
 Combining this with Proposition \ref{Zd-erg-decom} (1), we have
\begin{align}\label{h4}
    (N_d(X), \mathcal{N}_d(X),\mu^{(d)})&\stackrel{\phi}{\lra} (N_d(Z_{d-1}),\mathcal{N}_d(Z_{d-1}), \mu_{d-1}^{(d)})\stackrel{\psi}{\lra}
    (Z_{d-1},\ZZ_{d-1},\mu_{d-1})\nonumber\\
    {\bf x}&\lra \phi({\bf x}) \lra s=\psi(\phi({\bf x}))
\end{align}
and $\phi^{-1}(\psi^{-1}(\ZZ_{d-1}))=\I(N_d(X),\mu^{(d)},\sigma_d)$.
From this, let
\begin{equation}\label{1111}
    \mu^{(d)}=\int_{Z_{d-1}}\nu^{(d)}_s\ d \mu_{d-1}(s)
\end{equation}
be the disintegration of $\mu^{(d)}$
over $\mu_{d-1}$ by  $\psi\circ \phi$. Since $$\phi^{-1}(\psi^{-1}(\ZZ_{d-1}))=\I(N_d(X),\mu^{(d)},\sigma_d),$$
\eqref{1111} is the ergodic decompositions of $\mu^{(d)}$ under $\sigma_d$ (see e.g. \cite[Theorem 8.7]{Glasner}).  Moreover, for $\mu_{d-1}$-a.e. $s\in Z_{d-1}$,
$$\phi_*(\nu^{(d)}_s)=(\mu_{d-1}^{(d)})_s=\nu^{(d)}_{d-1,s}$$  by \cite[Corollary 5.24]{EW}, \eqref{ddbb-d}   and Proposition \ref{Zd-erg-decom}, where  $\mu^{(d)}_{d-1}= \int_{Z_{d-1}} (\mu_{d-1}^{(d)})_s d\mu_{d-1}(s)$ is the disintegration of $\mu_{d-1}^{(d)}$ over $\mu_{d-1}$ by $\psi$.

\medskip

To sum up, we have the following result:

\begin{thm}\label{ergodic-deco-Fur}
$\mu^{(d)}=\int_{Z_{d-1}}\nu^{(d)}_s\ d \mu_{d-1}(s)$ is the ergodic decomposition of
$\mu^{(d)}$ under $\sigma_d$.
\end{thm}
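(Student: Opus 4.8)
The plan is to transfer the ergodic decomposition statement from the nilfactor level (Theorem \ref{de-Zd}, repackaged in Proposition \ref{Zd-erg-decom}) up to $X$ through the factor map $\pi_{d-1}^d$, using the ergodicity of that map furnished by Corollary \ref{AP-cor-ergodic}. Concretely, $\mu^{(d)}$ is by construction a $d$-fold self-joining of $X$ that is $\langle\tau_d,\sigma_d\rangle$-invariant, and its image under $\pi_{d-1}^d$ is $\mu_{d-1}^{(d)}$. Applying Corollary \ref{AP-cor-ergodic} to the self-joining $\lambda=\mu^{(d)}$ gives that the factor map $\pi_{d-1}^d\colon (X^d,\mu^{(d)},\sigma_d)\to(Z_{d-1}^d,\mu_{d-1}^{(d)},\sigma_d)$ is ergodic, hence $\I(X^d,\mu^{(d)},\sigma_d)=(\pi_{d-1}^d)^{-1}\I(Z_{d-1}^d,\mu_{d-1}^{(d)},\sigma_d)$ modulo null sets. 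Restricting to $N_d(X)$ (legitimate since $\mu^{(d)}(N_d(X))=1$ and $\mu_{d-1}^{(d)}(N_d(Z_{d-1}))=1$), one obtains the identity \eqref{ddbb-d}: $\I(N_d(X),\mu^{(d)},\sigma_d)=\phi^{-1}\I(N_d(Z_{d-1}),\mu_{d-1}^{(d)},\sigma_d)$, where $\phi=\pi_{d-1}^d|_{N_d(X)}$.

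Next I would compose with the map $\psi$ from Proposition \ref{Zd-erg-decom}, which satisfies $\psi^{-1}(\ZZ_{d-1})=\I(N_d(Z_{d-1}),\mu_{d-1}^{(d)},\sigma_d)$. Feeding this into the previous display yields
\begin{equation*}
\phi^{-1}\big(\psi^{-1}(\ZZ_{d-1})\big)=\I(N_d(X),\mu^{(d)},\sigma_d).
\end{equation*}
Thus the sub-$\sigma$-algebra pulled back from $Z_{d-1}$ along $\psi\circ\phi$ is exactly the $\sigma$-algebra of $\sigma_d$-invariant sets in $N_d(X)$. Now take the disintegration $\mu^{(d)}=\int_{Z_{d-1}}\nu^{(d)}_s\,d\mu_{d-1}(s)$ of $\mu^{(d)}$ over $\mu_{d-1}$ along $\psi\circ\phi$. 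Because the conditioning $\sigma$-algebra coincides with $\I(N_d(X),\mu^{(d)},\sigma_d)$, the general principle that disintegrating an invariant measure over its invariant $\sigma$-algebra produces the ergodic decomposition (as in \cite[Theorem 8.7]{Glasner}) gives that \eqref{1111} is precisely the ergodic decomposition of $\mu^{(d)}$ under $\sigma_d$, which is the assertion of Theorem \ref{ergodic-deco-Fur}.

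The only real subtlety — and the step I would be most careful about — is bookkeeping the measure-algebra identifications: verifying that the restriction of the ergodicity statement from $X^d$ to the invariant subset $N_d(X)$ is valid (this uses that both $\mu^{(d)}$ and $\mu_{d-1}^{(d)}$ are supported on the respective $N_d$'s, which are $\langle\tau_d,\sigma_d\rangle$-invariant), and checking that all the $\sigma$-algebra equalities hold genuinely mod null sets so that the disintegrations compose correctly. One should also record, as a byproduct for later use, that for $\mu_{d-1}$-a.e.\ $s$ one has $\phi_*(\nu^{(d)}_s)=\nu^{(d)}_{d-1,s}$; this follows from functoriality of disintegration under the factor map $\phi$ (e.g.\ \cite[Corollary 5.24]{EW}) together with \eqref{ddbb-d} and Proposition \ref{Zd-erg-decom}(3). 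Apart from this routine measure-theoretic care, the proof is a direct concatenation of Corollary \ref{AP-cor-ergodic} and Proposition \ref{Zd-erg-decom}.
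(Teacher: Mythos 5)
Your proposal is correct and follows essentially the same route as the paper: it combines Corollary \ref{AP-cor-ergodic} (ergodicity of $\pi_{d-1}^d$ for the self-joining $\mu^{(d)}$) with Proposition \ref{Zd-erg-decom}, restricts to $N_d(X)$ and $N_d(Z_{d-1})$, identifies $\phi^{-1}(\psi^{-1}(\ZZ_{d-1}))$ with $\I(N_d(X),\mu^{(d)},\sigma_d)$, and then invokes the disintegration-over-the-invariant-$\sigma$-algebra principle (Glasner, Theorem 8.7) plus the functoriality of disintegrations (EW, Corollary 5.24), exactly as in the text. No substantive differences to report.
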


\section{Proof of Theorem A and Theorem B}\label{section-thA}

In this section we show Theorem A and Theorem B. First  we give the  proof of Theorem A by using the tools developed in Section \ref{section-furstenberg-joining}.

\subsection{Another form of Theorem A}

\begin{de}
Let $(X,\X, \mu, T)$ be an ergodic m.p.t. and $(\h{X}, \h{T})$
be its model.
For $d\in \N$, $(\h{X}, \h{T})$ is called a
{\em $\langle\tau_d, \sigma_d\rangle-$strictly ergodic model} for $(X,\X, \mu,
T)$ if $(\h{X}, \h{T})$ is a strictly ergodic model and $(N_d(\h{X}),
\langle {\tau}_d(\h{T}), {\sigma}_d(\h{T})\rangle)$ is strictly ergodic.
\end{de}

From the statement of Theorem A one does not know  how the model looks like. The following statement
avoids this weakness and is suitable for the induction.
Note that we let $Z_0=\{pt\}$ be the trivial system.

\begin{thm}\label{model}
Let $(X,\X, \mu, T)$ be an ergodic  m.p.t, $d\ge 2$  and $\pi_{d-2}: X\lra Z_{d-2}$ be the
factor map to the $Z_{d-2}$. Assume that $Z_{d-2}$ is isomorphic to a topological system
of order $d-2$ (see Theorem \ref{thm-HKM}, still denote it by $Z_{d-2}$). Then any strictly ergodic system $\h{X}$
obtained from Weiss's theorem is a $\langle\tau_d, \sigma_d\rangle-$strictly ergodic model.

\[
\begin{CD}
X @>{}>> \h{X}\\
@V{\pi_{d-2}}VV      @VV{\h{\pi}_{d-2}}V\\
Z_{d-2} @>{ }>> Z_{d-2}
\end{CD}
\]
\end{thm}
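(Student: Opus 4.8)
The plan is to prove Theorem \ref{model} by induction on $d$, using the ergodic decomposition of the Furstenberg self-joining (Theorem \ref{ergodic-deco-Fur}) as the central engine, together with Weiss's relative Jewett-Krieger theorem. The base cases $d=2$ (handled in Subsection \ref{AP-d=2}, where $N_2(X)=X\times X$ and every $\langle\tau_2,\sigma_2\rangle$-invariant measure equals $\mu\times\mu$) and the weakly mixing case (Theorem \ref{Thm-wm-AP}) already illustrate the mechanism. For the inductive step, let $\h{X}$ be any strictly ergodic model produced by Weiss's theorem over the base $Z_{d-2}$, so that there is a topological factor map $\h\pi_{d-2}\colon \h X\to Z_{d-2}$ which is a model for $\pi_{d-2}\colon X\to Z_{d-2}$. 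We already know $(\h X,\h T)$ is strictly ergodic; what must be shown is that $(N_d(\h X),\langle\h\tau_d,\h\sigma_d\rangle)$ is uniquely ergodic (minimality is Glasner's theorem \cite{G94}, so unique ergodicity gives strict ergodicity).

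First I would fix an arbitrary $\langle\h\tau_d,\h\sigma_d\rangle$-invariant measure $\lambda$ on $N_d(\h X)$; since $\lambda$ is $\h\tau_d$-invariant it is a $d$-fold self-joining of $(\h X,\h\mu)$ with each marginal $\h\mu$, and since $\h X$ is uniquely ergodic with ergodic measure $\h\mu$, the measure-theoretic system $(\h X,\h\mu,\h T)$ has $Z_{d-1}$ as its $(d-1)$-st nilfactor. The goal is to identify $\lambda$ with $\h\mu^{(d)}$, the Furstenberg self-joining. The route: push $\lambda$ down by $\h\pi_{d-1}^d$ to a $\langle\tau_d,\sigma_d\rangle$-invariant measure $\widetilde\lambda$ on $N_d(Z_{d-1})$; by Theorem \ref{ziegler}(1), $(N_d(Z_{d-1}),\langle\tau_d,\sigma_d\rangle)$ is uniquely ergodic, so $\widetilde\lambda=\mu^{(d)}_{d-1}$ is forced. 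Then I would invoke Corollary \ref{AP-cor-ergodic}: since $\lambda$ is $\sigma_d$-invariant, the factor map $\h\pi_{d-1}^d\colon(\h X^d,\lambda,\sigma_d)\to(Z_{d-1}^d,\widetilde\lambda,\sigma_d)$ is ergodic, hence $\I(\h X^d,\lambda,\sigma_d)\cong \I(Z_{d-1}^d,\mu^{(d)}_{d-1},\sigma_d)$. The point of Weiss's theorem entering here is that the intermediate factor $\h\pi_{d-2}\colon \h X\to Z_{d-2}$ is genuinely topological, so one has enough rigidity over $Z_{d-2}$ to run the conditional-measure argument; this is exactly where the plain Jewett-Krieger model would fail. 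Using the relative structure over $Z_{d-2}$ together with the disintegration argument of the type in Propositions \ref{prop-wm-AP} and the $d=2$ case — disintegrate $\lambda$ over its projection, use $\sigma_d'=\id\times\sigma_{d-1}$-invariance to get that the fiber measures are $\sigma_{d-1}$-invariant and (after conditioning on $Z_{d-2}$) essentially constant along ergodic pieces — I would pin down $\lambda$ fiberwise.

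More precisely, the inductive hypothesis applied to the extension $\h X\to Z_{d-2}$ says that over $Z_{d-2}$ the relevant self-joinings behave like the nilsystem ones, i.e. the conditional measures of $\lambda$ over $Z_{d-2}^d$ are determined. Combined with the ergodic decomposition $\mu^{(d)}_{d-1}=\int_{Z_{d-1}}\nu^{(d)}_{d-1,x}\,d\mu_{d-1}(x)$ of Theorem \ref{de-Zd} and its lift $\h\mu^{(d)}=\int_{Z_{d-1}}\nu^{(d)}_s\,d\mu_{d-1}(s)$ from Theorem \ref{ergodic-deco-Fur} (with $\phi_*\nu^{(d)}_s=\nu^{(d)}_{d-1,s}$), the ergodic components of any $\sigma_d$-invariant $\lambda$ on $N_d(\h X)$ are forced to be among the $\nu^{(d)}_s$; averaging them against $\mu_{d-1}$ (which is the only possibility given that the $Z_{d-1}$-projection of $\lambda$ is $\mu_{d-1}$ by unique ergodicity of $Z_{d-1}$) yields $\lambda=\h\mu^{(d)}$. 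Hence $(N_d(\h X),\langle\h\tau_d,\h\sigma_d\rangle)$ is uniquely ergodic, and with Glasner's minimality it is strictly ergodic, which is the assertion of Theorem \ref{model} (and Theorem A follows by taking $d$ arbitrary).

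The main obstacle I expect is the fiberwise rigidity step: showing that the disintegration of $\lambda$ over the base factor ($Z_{d-2}$, or the projection to $\h X^{d-1}$) has fiber measures that are constant along $\sigma_{d-1}$-ergodic components and match the nilsystem fiber measures $\mu^{(d)}_{d-1,x}$ from Theorem \ref{ziegler}(2)–(3). In the weakly mixing case this was immediate because $\sigma_{d-1}$ acts ergodically on $\h X^{d-1}$; in general $\sigma_{d-1}$ is far from ergodic on $\h X^{d-1}$, so one must instead work relative to $Z_{d-2}$ and carefully use the ergodicity statement of Corollary \ref{AP-cor-ergodic} to transport the (known) ergodic decomposition on the nilfactor level back up to $\h X$. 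Making the conditional-expectation identities of Lemma \ref{AP-lem-vdc2} and the Proposition that $\I(\h X^d,\lambda,\sigma_d)$ is $\ZZ_{d-1}^d$-measurable interact cleanly with the topological model structure supplied by Weiss's theorem is the delicate heart of the argument.
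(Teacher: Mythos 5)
Your skeleton (induction, Weiss's theorem over the nilfactor, disintegration of an arbitrary invariant measure, invariance of the fiber map, Theorem \ref{ergodic-deco-Fur} and Ziegler's unique ergodicity) matches the paper's, but the step you lean on to close the argument is precisely the one you do not prove. Knowing, via Corollary \ref{AP-cor-ergodic}, that $\I(\h X^d,\lambda,\sigma_d)$ is pulled back from $Z_{d-1}^d$, and that $(\h\pi_{d-1}^d)_*\lambda=\mu^{(d)}_{d-1}$ by Theorem \ref{ziegler}, only tells you that the $\sigma_d$-ergodic components of $\lambda$ are indexed by $s\in Z_{d-1}$ and \emph{project} to $\nu^{(d)}_{d-1,s}$; it does not force them to equal $\nu^{(d)}_s$, since a priori there can be many ergodic $\sigma_d$-invariant measures on $\h X^d$ lying over the same $\nu^{(d)}_{d-1,s}$. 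Your assertion that ``the ergodic components of any $\sigma_d$-invariant $\lambda$ on $N_d(\h X)$ are forced to be among the $\nu^{(d)}_s$'' is therefore essentially a restatement of the theorem, and your closing paragraph concedes that this ``fiberwise rigidity'' is exactly the unresolved obstacle.

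The paper closes this gap by a different reduction, which your proposal gestures at but never carries out: to prove unique ergodicity at level $d+1$ it splits off the \emph{first} coordinate. One disintegrates $\lambda=\int\lambda_{\bf x}\times\delta_{\bf x}\,d\mu^{(d)}({\bf x})$ over the projection onto the last $d$ coordinates, which by the inductive hypothesis (applied to $\h X$ through the composed topological factor $\h X\to Z_{d-1}\to Z_{d-2}$, using Lemma \ref{DDMSY}) is the uniquely ergodic $\mu^{(d)}$; the $\id\times\sigma_d$-invariance together with Theorem \ref{ergodic-deco-Fur} — this is where the topological factor map onto $Z_{d-1}$ supplied by Weiss's theorem is indispensable — shows the fiber map factors through $Z_{d-1}$, yielding $\lambda=\int_{Z_{d-1}}\lambda_s\times\nu^{(d)}_s\,d\mu_{d-1}(s)$ as in \eqref{add-final}. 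The genuinely nontrivial remaining step, absent from your proposal, is the identification of the measures $\lambda_s$ on the split-off copy of $X$: pushing forward to $N_{d+1}(Z_{d-1})$, using its unique ergodicity (Theorem \ref{ziegler}) and the positive-measure contradiction argument proving \eqref{meas-two-eq}, one gets $(\pi_{d-1})_*\lambda_s=\delta_s$; then, since $(p_1)_*\lambda=\mu$, the uniqueness of the disintegration of $\mu$ over $\mu_{d-1}$ forces $\lambda_s=\theta_s$ and hence \eqref{limit-D}. Without this identification (or a substitute for it), the claim $\lambda=\h\mu^{(d)}$ does not follow from the projection to the nilfactor and Corollary \ref{AP-cor-ergodic} alone, so the proof is incomplete at its central point.
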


Theorem \ref{model} follows immediately from the following corollary, which follows from Corollary \ref{coro3.3}.

\begin{cor}\label{cor4.3}
Let $(X, \X, \mu,T)$ be an ergodic system and $d\ge 2$ be an integer. Let $\pi_{d-2}: (X, \X, \mu,T)\rightarrow (Z_{d-2},\ZZ_{d-2}, \mu_{d-2}, T)$ be its factor of order $d-2$. Suppose that $\lambda$
is a $d$-fold self-joining of $X$ and it is $\sigma_d$-invariant. If $\mu_{d-2}^{(d)}$ is the image of $\lambda$ under $\pi_{d-2}^d$, then $\lambda$ is the conditionally independent measure with respect to $\mu_{d-2}^{(d)}$.
\end{cor}

The following result is a direct consequence of Theorem \ref{model}.

\begin{cor} Let $(X,\ T)$ be a uniquely  ergodic t.d.s. with invariant measure $\mu$ and $d\ge 2$. Assume that the measure theoretic factor map $\pi_{d-2}: X\lra Z_{d-2}$ is (equal $\mu$-a.e. to) a continuous factor map.
Then $(N_d(X),\langle\tau_d, \sigma_d\rangle)$ is unique ergodic, and the unique invariant measure is the Furstenberg
self-joining $\mu^{(d)}$.
\end{cor}

\subsection{Another proof of Theorem \ref{model} and its consequence}\

\medskip

In this subsection, we give another proof of Theorem \ref{model}. By this proof, we will get the following result, which is key to the proof of Theorem D.

\begin{thm}\label{thm4.5}
Let $(X,\ T)$ be a uniquely  ergodic t.d.s. with invariant measure $\mu$ and $d\ge 1$. Assume that the measure theoretic factor map $\pi_{d-1}: X\lra Z_{d-1}$ is (equal $\mu$-a.e. to) a continuous factor map. Let $\mu^{(d)}=\int_{Z_{d-1}}\nu^{(d)}_s\ d \mu_{d-1}(s)$ be the ergodic decompositions of $\mu^{(d)}$ under $\sigma_d$ as in Theorem \ref{ergodic-deco-Fur} and let
$\mu=\int _{Z_{d-1}}\theta_s\ d\mu_{d-1}(s)$ be the disintegration
of $\mu$ over $\mu_{d-1}$. Then
\begin{equation}\label{}
\mu^{(d+1)}=\int_{Z_{d-1}}\theta_s\times \nu^{(d)}_s \ d\mu_{d-1}(s).
\end{equation}
\end{thm}

\noindent {\em Another proof of Theorem \ref{model}} \
Let $(X,T)$ be a strictly ergodic
system and let $\mu$ be its unique $T$-invariant measure.

When $d=2$. Note that $X^2=X\times X$, $\tau_2=T\times T$, $\sigma_2=T\times
T^2$ and $\sigma_2'=\id \times T$. It is easy to see that $N_2(X)=X\times X$ and  $\mu\times \mu$ is unique $\langle\tau_2,\sigma_2\rangle$-invariant measure on $X\times X$.

Next assume that Theorem \ref{model} holds
for $d\ge 2$. We show it also holds for $d+1$.
Let $\pi_{d-1}: X\rightarrow Z_{d-1}$ be the factor map from $X$ to
$Z_{d-1}$, the system of order $d-1$. We build $\h{X}$ in the following way
by Weiss's theorem and Theorem \ref{thm-HKM}.
\[
\begin{CD}
X @>{}>> \h{X}\\
@V{\pi_{d-1}}VV      @VV{\h{\pi}_{d-1}}V\\
Z_{d-1} @>{ }>> Z_{d-1}\\
\end{CD}
\]
Without loss of generality we assume that $X=\h{X}$. 
Now we show that
$ (N_{d+1}(X), \allowbreak \langle \tau_{d+1}, \sigma_{d+1} \rangle)$ is uniquely ergodic.

Let $\zeta: Z_{d-1}\lra Z_{d-2}$ be the factor map to the maximal topological factor of order $d-2$.
By Theorem \ref{DDMSY}, $\zeta$ is also the factor map to the maximal factor of order $d-2$.
By the inductive assumption, $(N_{d}(X),\langle\tau_{d},\sigma_{d}\rangle)$ is uniquely ergodic,
and we denote its unique measure by $\mu^{(d)}$.

\medskip

By Theorem \ref{ergodic-deco-Fur},
\begin{equation}
    \mu^{(d)}=\int_{Z_{d-1}}\nu^{(d)}_s\ d \mu_{d-1}(s)
\end{equation}
is the ergodic decompositions of $\mu^{(d)}$ under $\sigma_d$.

\medskip

Let $\lambda$ be a
$\langle\tau_{d+1},\sigma_{d+1}\rangle$-invariant measure of $N_{d+1}(X)$ and
$\mu=\int _{Z_{d-1}}\theta_s\ d\mu_{d-1}(s)$ be the disintegration
of $\mu$ over $\mu_{d-1}$. We will show that
\begin{equation}\label{limit-D}
\lambda=\int_{Z_{d-1}}\theta_s\times \nu^{(d)}_s \ d\mu_{d-1}(s)
\end{equation}
which implies that $\lambda$ is unique.

\medskip

To do this let
$$p_1: (N_{d+1}(X), \langle\tau_{d+1},\sigma_{d+1}\rangle)\rightarrow (X, T); \
(x_1,{\bf x})\mapsto x_1$$
$$p_2: (N_{d+1}(X), \langle\tau_{d+1},\sigma_{d+1}\rangle)\rightarrow
(N_d(X), \langle\tau_d, \sigma_d\rangle); \ (x_1,{\bf x})\mapsto
{\bf x}$$  be the projections (here we use the fact that $\langle
\tau_d, T^2\times \ldots \times T^{d+1}\rangle=\langle\tau_d,
\sigma_d\rangle$). Then $(p_2)_*(\lambda)$ is a
$\langle\tau_d,\sigma_d\rangle$-invariant measure of $N_d(X)$. By
the assumption on $d$, $(p_2)_*(\lambda)=\mu^{(d)}$. Hence we may
assume that
\begin{equation}\label{h5}
\lambda =\int_{X^d} \lambda_{{\bf x}}\times \d_{\bf x}\ d \mu^{(d)} ({\bf
x})
\end{equation}
is the disintegration of $\lambda$ over $\mu^{(d)}$. Since $\lambda$ is
$\sigma'_{d+1}=\id\times \sigma_d$-invariant, we have
\begin{eqnarray*}
    \lambda &= &\id \times \sigma_d \lambda=
    \int_{X^d} \lambda_{\bf x}\times \sigma_d\d_{\bf x} \ d \mu^{(d)}({\bf x})\
    \\ & = & \int_{X^d} \lambda_{\bf x}\times \d_{\sigma_d({\bf x})} \ d \mu^{(d)}({\bf x})\\
    \\&=& \int_{X^d} \lambda_{(\sigma_d)^{-1}({\bf x})}\times \d_{\bf x} \ d \mu^{(d)}({\bf x}).
\end{eqnarray*}
The uniqueness of disintegration implies that
\begin{equation}\label{h6}
    \lambda_{(\sigma_d)^{-1}({\bf x})}=\lambda_{\bf x},\quad \mu^{(d)} \ a.e.
\end{equation}

Define $$F: (X^d,\mu^{(d)}, \sigma_d) \lra M(X): \ {{\bf x}}\mapsto
\lambda_{{\bf x}}.$$ By (\ref{h6}), $F$ is a $\sigma_d$-invariant
$M(X)$-valued function. Hence $F$ is $\I(X^d,\X^d,
\mu^{(d)},\sigma_d)$-measurable, and this implies $\lambda_{\bf
x}=\lambda_{\psi(\phi({\bf x}))}=\lambda_s, \ \mu^{(d)}\ $ a.e.,
where $s, \psi$ and $\phi$ are defined in (\ref{h4}).

Thus by (\ref{h5}) one has that
\begin{equation}\label{add-final}
\begin{split}
    \lambda& =\int_{X^d}  \lambda_{{\bf x}}\times \d_{\bf x}\ d\mu^{(d)} ({\bf x})=
\int_{X^d} \lambda_{\psi(\phi({\bf x}))} \times\d_{\bf x}\ d\mu^{(d)} ({\bf x})\\
       &=\int_{Z_{d-1}}\int_{X^d}\lambda_s\times \d_{\bf x}\ d\nu^{(d)}_s({\bf x})
       d\mu_{d-1}(s)\\
       &= \int_{Z_{d-1}}\lambda_s\times \Big(\int_{X^d} \d_{\bf x}\
       d\nu^{(d)}_s({\bf x})\Big ) d \mu_{d-1}(s)\\
       &=\int_{Z_{d-1}}\lambda_s\times \nu^{(d)}_s \ d\mu_{d-1}(s).
\end{split}
\end{equation}

In the sequel we will show that $\lambda_s=\theta_s$ for $\mu_{d-1}$-a.e. $s\in Z_{d-1}$
and it is clear that (\ref{limit-D}) follows from this fact and (\ref{add-final}) immediately.

\medskip

Let $\pi_{d-1}^{d+1}: (N_{d+1}(X), \langle\tau_{d+1},\sigma_{d+1}\rangle)\lra
(N_{d+1}(Z_{d-1}), \langle\tau_{d+1},\sigma_{d+1}\rangle)$ be the natural factor
map. By Theorem \ref{ziegler}, $(N_{d+1}(Z_{d-1}),
\langle\tau_{d+1},\sigma_{d+1}\rangle,\mu^{(d+1)}_{d-1})$ is uniquely ergodic.
Hence
\begin{equation*}
\begin{split}
    \int_{Z_{d-1}}(\pi^{d+1}_{d-1})_*(\lambda_s\times \nu^{(d)}_s) \ d\mu_{d-1}(s)=(\pi^{d+1}_{d-1})_*(\lambda) =\mu_{d-1}^{(d+1)}=\int_{Z_{d-1}}\d_{s}\times \mu^{(d+1)}_{d-1,s} \
    d\mu_{d-1}(s).
\end{split}
\end{equation*}
The last equality follows from  Theorem \ref{ziegler}(3), since
for $\mu_{d-1}$-a.e. $s\in Z_{d-1}$, the system $(\overline{\O((s,\ldots,s), \sigma'_{d+1})},\sigma_{d+1}')$ is
uniquely ergodic with some measure $\delta_s\times \mu^{(d+1)}_{d-1,s}$.
Hence $\mu^{(d+1)}_{d-1,s}$
is the unique ergodic measure of $(\overline{\O((s,\ldots,s), \sigma_{d})},\sigma_{d})$,
i.e.  $\mu^{(d+1)}_{d-1,s}=\nu^{(d)}_{d-1,s}$.

Note that $$(\pi^d_{d-1})_*(\nu^{(d)}_s)=\phi_*(\nu^{(d)}_s)=(\mu_{d-1}^{(d)})_s=\nu^{(d)}_{d-1,s}=\mu^{(d+1)}_{d-1,s}$$ and
$(\mu_{d-1}^{(d)})_s(\psi^{-1}(s))=1$
for $\mu_{d-1}$-a.e. $s\in Z_{d-1}$.
We claim that 
\begin{equation}\label{meas-two-eq}
(\pi^{d+1}_{d-1})_*(\lambda_s\times \nu^{(d)}_s)=\d_{s}\times \mu^{(d+1)}_{d-1,s}
\end{equation}
for $\mu_{d-1}$-a.e. $s\in Z_{d-1}$. We postpone the verification of (\ref{meas-two-eq})
to the next subsection.

It is clear that (\ref{meas-two-eq}) implies
\begin{equation}\label{proj=s}
(\pi_{d-1})_*(\lambda_s)=\d_s
\end{equation}
for $\mu_{d-1}$-a.e. $s\in Z_{d-1}$. Since $(p_1)_*(\lambda)=\mu$,
it follows from (\ref{add-final}) that
\begin{equation}\label{===}
\mu=\int_{Z_{d-1}}\lambda_s\ d\mu_{d-1}(s).
\end{equation}

(\ref{proj=s}) and (\ref{===}) imply that
$\mu=\int_{Z_{d-1}}\lambda_s\ d\mu_{d-1}(s)$ is also the
disintegration of $\mu$ over $\mu_{d-1}$. So we conclude
$$\lambda_s=\theta_s\  \text{for}\ \mu_{d-1}-a.e.\ s\in Z_{d-1}$$
by the uniqueness of the disintegration. The proof is completed.

\subsubsection{Proof of \eqref{meas-two-eq}}
Assume the contrary that \eqref{meas-two-eq} does not hold.
Then $$\mu_{d-1}(\{s\in Z_{d-1}: (\pi^{d+1}_{d-1})_*(\lambda_s\times \nu^{(d)}_s)\not=\d_{s}\times \mu^{(d+1)}_{d-1,s}\})>0.$$ So
there is some function $f\in C(N_{d+1}(Z_{d-1}))$ such that $\mu_{d-1}(C)>0$, where
\begin{equation*}
   C=\{s\in Z_{d-1}: (\pi^{d+1}_{d-1})_*(\lambda_s\times \nu^{(d)}_s)(f)>\d_{s}\times \mu^{(d+1)}_{d-1,s}(f)\}.
\end{equation*}
Let $B=\psi^{-1}(C)$ and  $A=p_{d-1,2}^{-1}(B)$, where $$p_{d-1,2}:
(N_{d+1}(Z_{d-1}), \langle\tau_{d+1},\sigma_{d+1}\rangle)\rightarrow
(N_d(Z_{d-1}), \langle\tau_d, \sigma_d\rangle); \ (s_1,{\bf
s})\mapsto {\bf s}$$  be the projection. Since
$\big((\pi^d_{d-1})_*(\nu^{(d)}_s)=\mu^{(d+1)}_{d-1,s}=(\mu_{d-1}^{(d)})_s$
and $(\mu_{d-1}^{(d)})_s(\psi^{-1}(s))=1$ for $\mu_{d-1}$-a.e. $s\in
Z_{d-1}$, one has that for $\mu_{d-1}$-a.e. $s\in Z_{d-1}$,
$$\mu^{(d+1)}_{d-1,s}(B)=\begin{cases} 1 &\text { if }s\in C\\ 0 &\text{ if } s\not \in C\end{cases} \text{ and }
(\pi^d_{d-1})_*(\nu^{(d)}_s)(B)=\begin{cases} 1 &\text { if }s\in C\\ 0 &\text{ if } s\not \in C.
\end{cases}$$

Moreover, for $\mu_{d-1}$-a.e. $s\in Z_{d-1}$, one has that for
$\mu_{d-1}$-a.e. $s\in Z_{d-1}$,
$$(\pi^{d+1}_{d-1})_*(\lambda_s\times \nu^{(d)}_s)(A)=(\pi^d_{d-1})_*(\nu^{(d)}_s)(B)=\begin{cases} 1 &\text { if }s\in C\\ 0 &\text{ if } s\not \in C\end{cases}$$
and
$$\d_{s}\times \mu^{(d+1)}_{d-1,s}(A)=\mu^{(d+1)}_{d-1,s}(B)=\begin{cases} 1 &\text { if }s\in C\\ 0 &\text{ if } s\not \in C\end{cases}.$$

Thus
\begin{equation*}
\begin{split}
    \mu^{(d+1)}_{d-1}(f\cdot 1_A)&=\int_{N_{d+1}(Z_{d-1})}f\cdot 1_A \ d\mu^{(d+1)}_{d-1}\\
    &=\int_{Z_{d-1}}\big(\int_{N_{d+1}(Z_{d-1})} f\cdot 1_A \ d \d_{s}\times \mu^{(d+1)}_{d-1,s}\big) \
    d\mu_{d-1}(s)\\
    &=\int_{C}\d_{s}\times \mu^{(d+1)}_{d-1,s}(f)d\mu_{d-1}(s)\\
    &<\int_{C}(\pi^{d+1}_{d-1})_*(\lambda_s\times \nu^{(d)}_s)(f)\ d\mu_{d-1}(s)\\
    &=\int_{Z_{d-1}} \big( \int_{N_{d+1}(Z_{d-1})} f\cdot 1_A \ d (\pi^{d+1}_{d-1})_*(\lambda_s \times \nu^{(d)}_s) \big) \ d\mu_{d-1}(s)\\
    &= \mu^{(d+1)}_{d-1}(f\cdot 1_A),
\end{split}
\end{equation*}
a contradiction! Hence \eqref{meas-two-eq} holds.

\subsection{Proof of Theorem B}\label{section-thB}
In this subsection we show how to obtain Theorem B from Theorem A. We need the following formula which is easy to be verified.

\begin{lem}\label{lem-product}
Let $\{a_i\}, \{b_i\}\subseteq \C$. Then
\begin{equation*}\label{}
\prod_{i=1}^k a_i-\prod_{i=1}^k b_i=(a_1-b_1)b_2\ldots b_k+
a_1(a_2-b_2)b_3\ldots b_k +\ldots +a_1\ldots a_{k-1}(a_k-b_k).
\end{equation*}
\end{lem}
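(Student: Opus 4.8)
The statement to prove is Lemma~\ref{lem-product}: the telescoping identity
\[
\prod_{i=1}^k a_i-\prod_{i=1}^k b_i=(a_1-b_1)b_2\ldots b_k+
a_1(a_2-b_2)b_3\ldots b_k +\cdots+ a_1\ldots a_{k-1}(a_k-b_k).
\]

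\textbf{Approach.} The plan is to prove this by a straightforward induction on $k$ (equivalently, to recognize the right-hand side as a telescoping sum). There is essentially no obstacle here; the only care needed is bookkeeping of indices. I would not expect to need any of the machinery developed earlier in the paper — this is a purely algebraic identity over $\C$.

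\textbf{Key steps.} First, the base case $k=1$: both sides equal $a_1-b_1$. Next, assume the identity holds for $k-1$, i.e.
\[
\prod_{i=1}^{k-1} a_i-\prod_{i=1}^{k-1} b_i=\sum_{j=1}^{k-1} a_1\cdots a_{j-1}(a_j-b_j)b_{j+1}\cdots b_{k-1}.
\]
Then write
\[
\prod_{i=1}^k a_i-\prod_{i=1}^k b_i
=\Big(\prod_{i=1}^{k-1} a_i\Big)a_k-\Big(\prod_{i=1}^{k-1} b_i\Big)b_k
=\Big(\prod_{i=1}^{k-1} a_i\Big)(a_k-b_k)+\Big(\prod_{i=1}^{k-1} a_i-\prod_{i=1}^{k-1} b_i\Big)b_k,
\]
where in the second equality I add and subtract $\big(\prod_{i=1}^{k-1} a_i\big)b_k$. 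The first term on the right is exactly the last summand $a_1\cdots a_{k-1}(a_k-b_k)$ of the desired expression. Applying the induction hypothesis to the factor $\prod_{i=1}^{k-1} a_i-\prod_{i=1}^{k-1} b_i$ and multiplying through by $b_k$ turns the second term into $\sum_{j=1}^{k-1} a_1\cdots a_{j-1}(a_j-b_j)b_{j+1}\cdots b_{k-1}b_k$, which supplies precisely the first $k-1$ summands. Summing the two contributions gives the claimed formula, completing the induction.

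\textbf{Main obstacle.} Honestly there is none of substance; the statement is elementary. The only thing to watch is the empty-product convention (when $j=1$ the prefix $a_1\cdots a_{j-1}$ is empty, and when $j=k$ the suffix $b_{j+1}\cdots b_k$ is empty), so I would state that convention explicitly at the outset to keep the index manipulation unambiguous.
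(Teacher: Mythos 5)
Your proof is correct: the induction via adding and subtracting $\bigl(\prod_{i=1}^{k-1}a_i\bigr)b_k$ is exactly the standard telescoping verification, and the paper itself states Lemma \ref{lem-product} without proof as "easy to be verified," so your argument supplies precisely the intended routine check.
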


\medskip

\noindent{\bf The proof of Theorem B:}
Since $(X,\X, \mu, T)$ has a $\langle \tau_d,\sigma_d\rangle-$strictly ergodic model, we
may assume that $(X,T)$ itself is a minimal t.d.s. and
$\mu$ is its unique measure such that $(N_d(X), \langle\tau_d, \sigma_d
\rangle)$ is uniquely ergodic with the unique measure $\mu^{(d)}$.

Fix $f_1,\ldots, f_d\in L^\infty$ and let $\ep>0$. Without loss of generality, we assume that for all $1\le j\le d$,
$\|f_j\|_\infty\le 1$. Choose continuous functions $g_j$ such that
$\|g_j\|_\infty\le 1$ and $\|f_j-g_j\|_1<\ep/d$ for all $1\le j\le
d$. We have
\begin{equation}\label{yexd1}
\begin{split}
&  \left | \frac{1}{N^2} \sum_{n\in [0,N-1] \atop{m\in [0,N-1]}} \prod_{j=1}^{d} f_j(T^{n+(j-1)m}x) -
\int_{N_d(X)}\bigotimes_{j=1}^df_jd\mu^{(d)} \right |\\
& \le  \left | \frac{1}{N^2} \sum_{n\in [0,N-1] \atop{m\in [0,N-1]}} \prod_{j=1}^{d} f_j(T^{n+(j-1)m}x) -
\frac{1}{N^2} \sum_{n\in [0,N-1] \atop{m\in [0,N-1]}}\prod_{j=1}^d g_j(T^{n+(j-1)m}x)\right |\\
&+\left | \frac{1}{N^2} \sum_{n\in [0,N-1] \atop{m\in [0,N-1]}}\prod_{j=1}^d g_j(T^{n+(j-1)m}x)-
\int_{N_d(X)}\bigotimes_{j=1}^dg_jd\mu^{(d)} \right |+\left | \int_{N_d(X)}\bigotimes_{j=1}^dg_jd\mu^{(d)}-
\int_{N_d(X)}\bigotimes_{j=1}^df_jd\mu^{(d)} \right |.
\end{split}
\end{equation}

Now by Pointwise Ergodic Theorem for $\Z^2$ applying to $(n,m)\mapsto T^{n+(j-1)m}$ (see for example \cite{Lin}) we have that for all $1\le j\le d$
\begin{equation}\label{need}
    \frac {1}{N^2} \sum_{n\in [0,N-1] \atop{m\in
[0,N-1]}}\Big | f_j(T^{n+(j-1)m}x)-g_j(T^{n+(j-1)m}x)\Big | \lra
\|f_j-g_j\|_1, \quad N\to \infty.
\end{equation}
for $\mu$ a.e.
Hence by Lemma \ref{lem-product},
\begin{equation}\label{yexd2}
\begin{split}
&  \limsup_{N\to \infty} \left | \frac{1}{N^2} \sum_{n\in [0,N-1] \atop{m\in [0,N-1]}}
   \prod_{j=1}^{d} f_j(T^{n+(j-1)m}x) -
\frac{1}{N^2} \sum_{n\in [0,N-1] \atop{m\in [0,N-1]}}
   \prod_{j=1}^d g_j(T^{n+(j-1)m}x)\right |\\
& \le \sum_{j=1}^d \Big[\lim_{N\to\infty }\frac {1}{N^2} \sum_{n\in [0,N-1]
\atop{m\in
[0,N-1]}}\Big | f_j(T^{n+(j-1)m}x)-g_j(T^{n+(j-1)m}x)\Big |\Big ]\\
& = \sum_{j=1}^d \|f_j-g_j\|_1\le \ep,\ a.e.
\end{split}
\end{equation}
Since $g_1\otimes \ldots \otimes g_d: X^{d}\rightarrow \R$ is
continuous and $(N_d(X), \langle \tau_d, \sigma_d \rangle, \mu^{(d)})$ is uniquely ergodic, we have
\begin{equation}\label{xdye3}
\lim_{N\to\infty} \left | \frac{1}{N^2} \sum_{n\in [0,N-1] \atop{m\in [0,N-1]}}\prod_{j=1}^d g_j(T^{n+(j-1)m}x)-
\int_{N_d(X)}\bigotimes_{j=1}^dg_jd\mu^{(d)} \right |=0.
\end{equation}
Because the $j^{th}$ marginal of $\mu^{(d)}$ is equal to $\mu$, by Lemma \ref{lem-product} we have
\begin{equation}\label{yexd4}
\left | \int_{N_d(X)}\bigotimes_{j=1}^dg_jd\mu^{(d)}-
\int_{N_d(X)}\bigotimes_{j=1}^df_jd\mu^{(d)} \right |
\le \sum_{j=1}^d\int_{X}|g_j-f_j|d\mu \le \ep.
\end{equation}

So combining (\ref{yexd1})-(\ref{yexd4}), we have
$$ \limsup_{N\to\infty }\left | \frac{1}{N^2} \sum_{n\in [0,N-1] \atop{m\in [0,N-1]}} \prod_{j=1}^{d} f_j(T^{n+(j-1)m}x) -
\int_{N_d(X)}\bigotimes_{j=1}^df_jd\mu^{(d)} \right |\le 2\ep, a.e. $$
Since $\ep$ is arbitrary, the proof is completed.

\subsection{Proof of Theorem D}

\begin{proof}

First we assume that $(X, T,\mu)$ is the strictly
ergodic system obtained in Theorem \ref{model} by setting $X=\h{X}$
with the unique measure $\mu^{(d+1)}$.

Let $\pi_{d-1}: X\lra Z_{d-1}$ be the factor map and $\mu=\int _{Z_{d-1}}\theta_s\ d\mu_{d-1}(s)$ be the disintegration
of $\mu$ over $\mu_{d-1}$. Then by Theorem \ref{thm4.5},
we have
\begin{equation}\label{s1}
\mu^{(d+1)}=\int_{Z_{d-1}}\theta_s\times \nu^{(d)}_s \ d\mu_{d-1}(s).
\end{equation}
For $x\in X$, let
\begin{equation}\label{s2}
    \mu^{(d)}_x=\nu^{(d)}_{\pi_{d-1}(x)}.
\end{equation}
By definition, for $\mu$ a.e. $x\in X$, $\mu^{(d)}_x$ is ergodic under $T\times T^2\times \ldots \times T^d$.

\medskip

Now we verify that $\{\mu^{(d)}_x\}_{x\in X}$ satisfies (\ref{ss}). First together with $\mu=\int _{Z_{d-1}}\theta_s\ d\mu_{d-1}(s)$,
we can rewrite (\ref{s1}) as
\begin{equation}\label{s3}
\mu^{(d+1)}=\int_{X}\d_x\times \mu^{(d)}_x \ d\mu(x).
\end{equation}

In fact,
\begin{equation*}
\begin{split}
\int_{X}\d_x\times \mu^{(d)}_x \ d\mu(x)=&\int_{Z_{d-1}} \int_X \d_x\times \nu^{(d)}_{\pi_{d-1}(x)} \ d \theta_s(x) \ d \mu_{d-1}(s) \\
   = & \int_{Z_{d-1}} \Big ( \int_X \d_x \ d \theta_s(x) \Big) \times \nu^{(d)}_{s} \ d \mu_{d-1}(s)
   \\ = &\int_{Z_{d-1}}\theta_s\times \nu^{(d)}_s \ d\mu_{d-1}(s)= \mu^{(d+1)}.
\end{split}
\end{equation*}

Now we show (\ref{ss}). By Theorem 1.1 in \cite{HK05}, let the left side of (\ref{ss}) converge (in $L^2$) to some function $g$. Now we show $g$ is equal to the right side of
(\ref{ss}). Let $f\in L^\infty(X)$, we have
\begin{equation*}
\begin{split}
& \int f(x)g(x)\ d \mu(x)\\ =&\lim_{N\to\infty}\int_X \frac{1}{N}
\sum_{n=0}^{N-1} f(x)
   f_1(T^nx)f_2(T^{2n}x)\ldots f_d(T^{d n}x)d\mu(x)\\
   = & \lim_{N\to \infty} \int_X\frac{1}{N}
\sum_{n0}^{N-1} f(T^nx)
   f_1(T^{2n}x)f_2(T^{3n}x)\ldots f_d(T^{(d+1)n}x)d\mu(x)
   \\ =& \int_{X^{d+1}} f(x_0)f_1(x_1)\ldots f_{d}(x_{d})d \mu^{(d+1)}(x_0,x_1,\ldots,x_{d-1}) \ (\text{by Definition \ref{de-Furstenberg-selfjoining}})
   \\ =& \int_X f(x)\Big(\int_{X^{d}} f_1(x_1)f_2(x_2)\ldots f_{d}(x_{d})\
   d \mu^{(d)}_x(x_1,x_2,\ldots,x_{d})\Big) d\mu(x).
\end{split}
\end{equation*}
Thus $g(x)=\displaystyle \int_{X^d} f_1(x_1)f_2(x_2)\ldots
f_{d}(x_{d})\    d \mu^{(d)}_x(x_1,x_2,\ldots,x_{d}), \ \mu \ a.e. \ x\in X$. Note that (\ref{s3}) is
used in the last equality.

\medskip

For $j\in \{1,2,\ldots,d\}$, $(p_j)_*(\mu_x^{(d)})$ is a $T^j$-invariant measure of $X$. Let $\nu_j=(p_j)_*(\mu_x^{(d)})$. Since
$(X,T,\mu)$ is uniquely ergodic, it is easy to see that
$\mu=[{\nu_j +T_*\nu_j +\ldots +(T^{j-1})_* \nu_j}]/{j}$. It follows that
$(p_j)_*(\mu_x^{(d)})=\nu_j\ll \mu$.
Hence the theorem holds for the system $(X,\mu, T)$.

\medskip

Now we prove the result for any ergodic system $(X,\X,\mu,T)$. By
the proof of Theorem A, $(X,\X,\mu,T)$ has a strictly ergodic model
$(\h{X},\h{T}, \h{\mu})$.
Let $\phi: X\rightarrow \h{X}$ be
the isomorphism. It is clear that $\phi^d: X^d\rightarrow \h{X}^d$
is also an isomorphism.

By the proof above, we have showed that for $(\h{X},\h{T}, \h{\mu})$, there exists a family
$\{\h{\mu}^{(d)}_x\}_{x\in \h{X}}$ of probability measures on ${\h{X}}^d$ such that it
satisfies condition (1)-(3) listed in the theorem. Define $\mu^{(d)}_x=\h{\mu}^{(d)}_{\phi(x)}\circ \phi^d$. By (3), for $\h{\mu}$ a.e. $\h{x}\in \h{X}$, $(p_j)_*(\h{\mu}_{\h{x}}^{(d)})\ll \h{\mu}$ for $1\le j\le d$, where $p_j: \h{X}^d\rightarrow \h{X}$ is the projection to the $j$-th coordinate. It follows that $\mu^{(d)}_x$ is well-defined.
Then it is not hard to check that $\{\mu^{(d)}_x\}_{x\in X}$ also satisfies (1)-(3). The proof is completed.
\end{proof}

\section{Proof of Theorem C}\label{section-thC}
In this section we will prove Theorem C. To do this, first we derive some properties from
the result proved in the previous sections. Then using the properties and a lemma we show that
the pointwise convergence can be lifted from a distal system to its isometric extension under some
conditions. Finally we conclude Theorem C by the structure theorem for distal systems.

\subsection{Isometric extensions}


Isometric extensions and weakly mixing extensions are two basic extensions in the Furstenberg structure theorem for a m.p.t.
Let $\pi: (X,\X,\mu,T)\rightarrow (Y,\Y,\nu,S)$ be a factor map. The
$L^2(X,\X,\mu)$ norm is denoted by $||\cdot||$ and the
$L^2(X,\X,\mu_y)$ norm by $||\cdot||_y$ for $\nu$-almost every $y
\in Y$. Recall $\{\mu_y\}_{y \in Y}$ is the disintegration of $\mu$
relative to $\nu$.
A function $f\in L^2(X,\X,\mu)$ is {\em almost periodic over $\Y$}
if for every $\ep>0$ there exist $g_1,\ldots, g_l \in
L^{2}(X,\X,\mu)$ such that for all $n\in \Z$
$$\min_{1\le j\le l} ||T^nf-g_j||_y<\ep$$
for $\nu$ almost every $y\in Y$. One writes  $f\in AP(\Y)$.
Let $K(X|Y, T)$ be the closed subspace of $L^2(X)$ spanned by the
almost periodic functions over $\Y$. When $\Y$ is trivial,
$K(X,T)=K(X|Y,T)$ is the closed subspace spanned by eigenfunctions
of $T$.

$X$ is an {\em isometric extensions} of $Y$ if $K(X|Y,T)=L^2(X)$ and
it is a {\em (relatively) weak mixing extension} of $Y$ if
$K(X|Y,T)=L^2(Y)$.

It can be shown that if $X$ is an isometric extension of an m.p.t. $(Y, \nu, S)$, then $X$ is isomorphic to a {\em skew product} $X'=Y\times M$,
where $M = G / H$ is a homogeneous compact metric space, $\mu'=\nu\times m_M$
with $m_M$ is the unique probability measure invariant under the transitive group
of isometries $G$. Moreover, the action of $T'$ on $X'$ is given by
$$T'(y,gH)=(Sy, \rho(y)gH),$$ where $\rho: Y\rightarrow G$ is a {\em cocycle}.
We denote $X'$ by $Y\times _\rho G/H$, and $T'$ by $T_\rho$. When $H$ is trivial,
we say $Y\times _\rho G$ is a {\em group extension} of $Y$. We refer to \cite{Glasner}
for the details.



\begin{lem}\label{extension}
Let $\pi:(X,\X,\mu,T)\rightarrow (Y,\Y,\nu,S)$ be a factor map between ergodic systems with $Z_{d-1}(X)=Z_{d-1}(Y)$, and $d\in \N$. Assume that $\{\mu^{(d)}_x\}_{x\in X}$
and $\{\nu^{(d)}_y\}_{y\in Y}$  are the families of measures defined in Theorem D respectively.
Then for given $f_1,\ldots, f_d\in L^\infty(\mu)$, one has that for $\mu$ a.e.  $x\in X$
\begin{equation}\label{xx}
\begin{split}
\int_{X^d} & f_1(x_1) \ldots f_d(x_d)\ d\mu^{(d)}_x(x_1,\ldots,x_d) \\
   = & \int_{Y^d} \E (f_1|\Y)(y_1)\E(f_2|\Y)(y_2)\ldots \E(f_d|\Y)(y_d)\
   d \nu^{(d)}_y(y_1,y_2,\ldots,y_{d}) .
\end{split}
\end{equation}
\end{lem}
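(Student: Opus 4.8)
The plan is to derive \eqref{xx} from Theorem \ref{decomposition} together with two facts: that $\ZZ_{d-1}$ is the characteristic factor for the averages $\frac1N\sum_{n}f_1(T^nx)\cdots f_d(T^{dn}x)$, and the hypothesis $Z_{d-1}(X)=Z_{d-1}(Y)$. The cocycle $\rho$ will not actually enter the computation; only the skew-product picture $X=Y\times_\rho G/H$ with $\mu=\nu\times m_M$ is needed, and only through the fact that the disintegration of $\mu$ over $\nu$ is by the Haar measure $m_M$ on $G/H$.

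First I would rewrite the left-hand side of \eqref{xx}. By Theorem \ref{decomposition}(2) applied to $(X,\X,\mu,T)$, the function $x\mapsto\int_{X^d}f_1\cdots f_d\,d\mu^{(d)}_x$ is the $L^2(\mu)$-limit of $A_N(x):=\frac1N\sum_{n=0}^{N-1}f_1(T^nx)\cdots f_d(T^{dn}x)$. Since $\ZZ_{d-1}$ is characteristic for these averages, expanding $\prod_j f_j(T^{jn}x)-\prod_j\E(f_j\mid\ZZ_{d-1})(T^{jn}x)$ via Lemma \ref{lem-product} shows that $A_N$ and $A_N'(x):=\frac1N\sum_{n=0}^{N-1}\prod_{j=1}^d\E(f_j\mid\ZZ_{d-1})(T^{jn}x)$ have the same $L^2(\mu)$-limit (both exist, again by Theorem \ref{decomposition}(2), applied this time to the bounded functions $\E(f_j\mid\ZZ_{d-1})$).

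Next I would push everything down to $Y$. Put $h_j:=\E(f_j\mid\Y)\in L^\infty(\nu)$; since $\mu=\nu\times m_M$ on $X=Y\times_\rho G/H$, one has $h_j(y)=\int_G f_j(y,gH)\,dg$. The hypothesis $Z_{d-1}(X)=Z_{d-1}(Y)$ means, as sub-$\sigma$-algebras of $\X$, that $\ZZ_{d-1}=\pi^{-1}\ZZ_{d-1}(Y)\subseteq\pi^{-1}\Y$ and that $\pi_{d-1}=\pi_{d-1}^{Y}\circ\pi$; hence, by the tower property of conditional expectation, $\E(f_j\mid\ZZ_{d-1})$ is the $\pi$-pullback of $\E(h_j\mid\ZZ_{d-1})$ computed in $Y$, and therefore $A_N'=B_N\circ\pi$ where $B_N(y):=\frac1N\sum_{n=0}^{N-1}\prod_{j=1}^d\E(h_j\mid\ZZ_{d-1})(S^{jn}y)$ is defined on $Y$. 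Since $\ZZ_{d-1}$ is also characteristic for the corresponding averages on $Y$, $B_N$ has the same $L^2(\nu)$-limit as $\frac1N\sum_{n}h_1(S^ny)\cdots h_d(S^{dn}y)$, which by Theorem \ref{decomposition}(2) for $(Y,\Y,\nu,S)$ converges to $y\mapsto\int_{Y^d}h_1\cdots h_d\,d\nu^{(d)}_y$. Pulling back along the measure-preserving map $\pi$ gives $A_N'\to\bigl(x\mapsto\int_{Y^d}h_1\cdots h_d\,d\nu^{(d)}_{\pi(x)}\bigr)$ in $L^2(\mu)$, and comparing with the first step yields, for $\mu$-a.e. $x=(y,gH)$,
\begin{equation*}
\int_{X^d}f_1\cdots f_d\,d\mu^{(d)}_x=\int_{Y^d}h_1(y_1)\cdots h_d(y_d)\,d\nu^{(d)}_y(y_1,\ldots,y_d);
\end{equation*}
substituting $h_j(y_j)=\int_G f_j(y_j,g_jH)\,dg_j$ and applying Fubini turns the right-hand side into that of \eqref{xx}.

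I expect the only genuinely delicate point to be the $\sigma$-algebra bookkeeping in the third step: the identity $\E(f_j\mid\ZZ_{d-1})(T^{jn}x)=\E(h_j\mid\ZZ_{d-1})(S^{jn}\pi(x))$ rests precisely on $\ZZ_{d-1}(X)=\pi^{-1}\ZZ_{d-1}(Y)$ (so that conditioning onto $\ZZ_{d-1}$ factors through conditioning onto $\Y$) together with $\pi\circ T=S\circ\pi$, while the identity $h_j(y)=\int_G f_j(y,gH)\,dg$ is exactly where the isometric-extension hypothesis is used. Everything else — the characteristic-factor property, Theorem \ref{decomposition}, and Fubini — is applied off the shelf.
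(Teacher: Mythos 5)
Your proposal is correct and takes essentially the same route as the paper's proof: Theorem \ref{decomposition} applied on $X$ and on $Y$, reduction of the averages to the conditional expectations $\E(f_j|\Y)$ (the paper gets this from $Z_{d-1}(X)=Z_{d-1}(Y)$ by citing Theorem 12.1 of \cite{HK05}, whereas you rederive it by hand via the tower property through $\ZZ_{d-1}$ and the characteristic-factor property in both systems), followed by the identity $\E(f_j|\Y)(y)=\int_G f_j(y,gH)\,dg$ and Fubini. The only deviation is this self-contained derivation of the intermediate step, which is logically equivalent to the paper's citation.
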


\begin{proof}
Since $Z_{d-1}(X)=Z_{d-1}(Y)$, by Theorem 12.1 in \cite{HK05}, $Y$ is also a characteristic factor of $X$. That is
\begin{equation*}
\begin{split}
\|\frac{1}{N} \sum_{n=0}^{N-1}
   &f_1(T^nx)f_2(T^{2n}x)\ldots f_d(T^{dn}x)\\
   - & \frac{1}{N} \sum_{n=0}^{N-1}
   \E (f_1|\Y)(T^nx)\E(f_2|\Y)(T^{2n}x)\ldots \E(f_d|\Y)(T^{dn}x) \|_{L^2}\to 0
\end{split}
\end{equation*}
as $N\to \infty$. Moreover we obtain \eqref{xx} by applying Theorem D (2) to $(X,\X,\mu,T)$ and $(Y, \Y,\nu, S)$.
\end{proof}

In the proof of Proposition \ref{IsoExten}, we need the following Proposition \ref{Les-d}.  Recall that for a compact metric space $X$, $M(X)$ is the set of all Borel
probability measure on $X$ with weak$^*$ topology. If $T$ is a continuous map from $X$ to itself, then it is well known that for all $x\in X$ each limit point of $\{\frac 1N \sum_{n=0}^{N-1} T^n_*\d_x\}_{N\in \N}$ is $T$-invariant. If $T$ is measurable instead of continuity, then more will be involved. Proposition \ref{Les-d} will deal with the similar situation in $X^d$ for our purpose.

We remark that when $d=2$ and all transformations
are ergodic, this proposition was proved in \cite[Proposition 3]{L87}.
We leave the proof of this result in the appendix, which is similar to the one in \cite{L87} but much more involved.

\begin{prop}\label{Les-d}
Let $(X,\X,\mu)$ be a probability space with $X$ compact metric space and $d\in \N$. For $1\le i\le d$,
let $T_i: X\rightarrow X$ be measure preserving transformations with finitely many ergodic components.
Then there is a measurable set $X_*$ with $\mu(X_*)=1$ such that for $x\in X_*$
each weak$^*$ limit point $\lambda$ of the sequence
$$\big\{\frac {1}{N} \sum_{n=0}^{N-1} (T_1\times T_2\times \ldots \times T_d)^n_* \d_{(x,x,\ldots,x)}
\big\}_N$$ is in $M(X^d)$, and $\lambda$ is $T_1\times \ldots \times T_d$-invariant.
\end{prop}

The following proposition is crucial for our proof.
\begin{prop}\label{IsoExten}
Let $\pi:(X=Y\times _\rho G/H,\X,\mu,T)\rightarrow (Y,\Y,\nu,S)$ be an isometric extension
between two ergodic systems with $Z_{d-1}(X)=Z_{d-1}(Y)$, and $d\in \N$. If
\begin{equation*}
    \frac 1 N\sum_{n=0}^{N-1}f'_1(T^ny)f'_2(T^{2n}y)\ldots f'_d(T^{dn}y)
\end{equation*}
converge $\nu$ a.e. for any given $f'_1,\ldots, f'_d\in
L^\infty(\nu)$, then
\begin{equation*}
    \frac 1 N\sum_{n=0}^{N-1}f_1(T^nx)f_2(T^{2n}x)\ldots f_d(T^{dn}x)
\end{equation*}
converge $\mu$ a.e. for any given $f_1,\ldots, f_d\in
L^\infty(\mu)$.
\end{prop}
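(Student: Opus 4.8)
The plan is to write $X = Y\times_\rho G/H$ as a factor of a group extension $\widetilde X = Y\times_\rho G$ (with $\widetilde X \to X$ the natural map $(y,g)\mapsto (y,gH)$), so that it suffices to prove the statement for group extensions; indeed a function in $L^\infty(\mu)$ pulls back to a function in $L^\infty$ of the group extension, and $\mu$-a.e.\ convergence follows from a.e.\ convergence upstairs. For the group extension we use the classical fact (from the Mackey/Zimmer theory of cocycles, see \cite{Glasner}) that the ergodic decomposition of $\nu\times m_G$ under $T_\rho$ is governed by a subgroup: there is a closed subgroup $K\le G$ and a measurable map $\phi:Y\to G$ such that, after conjugating the cocycle $\rho$ by $\phi$, we may assume the extension restricts to an ergodic group extension $Y\times_\rho K$, and the original extension is (isomorphic to) the disjoint union over $G/K$ of translates of this ergodic piece. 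Hence, replacing $G$ by $K$, we may assume $\pi: X=Y\times_\rho G\to Y$ is an \emph{ergodic} group extension.

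Next I would invoke Lemma \ref{extension}: for $\mu$-a.e.\ $x=(y,g)$ the limit of the averages $\frac1N\sum_{n=0}^{N-1}\prod_{j=1}^d f_j(T^{jn}x)$ equals
\[
F(y,g):=\int_{Y^d\times G^d} f_1(y_1,g g_1)f_2(y_2,g g_2)\ldots f_d(y_d,g g_d)\,d\nu_y^{(d)}(y_1,\ldots,y_d)\,dg_1\ldots dg_d,
\]
where I have absorbed the coset coordinate into the Haar integral (legitimate since $dg_i$ is Haar measure on $G$ and the extension is a group extension). The point of this formula is that Lemma \ref{extension} is a statement about \emph{$L^2$} convergence; to upgrade to a.e.\ convergence I want to realize the $d$-fold averages on $X$ as averages of a continuous function over an orbit in a uniquely ergodic system and then apply the uniquely-ergodic pointwise theorem, exactly as in the proof of Theorem B. Concretely: pass to the $\langle\tau_d,\sigma_d\rangle$-strictly ergodic model of $X$ produced by Theorem \ref{model}/Theorem A, so that $(N_d(X),\langle\tau_d,\sigma_d\rangle)$ is uniquely ergodic with measure $\mu^{(d)}$, and for continuous $g_1,\ldots,g_d$ the averages $\frac1{N}\sum_{n=0}^{N-1}\prod_j g_j(T^{jn}x)$ converge for \emph{every} $x$ to $\int_{N_d(X)}\bigotimes_j g_j\,d\mu_x^{(d)}$ — here one uses that $(\overline{\mathcal O(x^d,\sigma_d)},\sigma_d)$ is uniquely ergodic with measure $\mu_x^{(d)}$ for $\mu$-a.e.\ $x$, which is the content of Theorem \ref{decomposition}(1) together with Theorem \ref{ziegler}-type unique ergodicity along sub-orbit-closures. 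Then the $\delta/d$-approximation of general $L^\infty$ functions by continuous ones (as in the proof of Theorem B, using the pointwise ergodic theorem for $\Z$ to control $\frac1N\sum_n|f_j-g_j|(T^{jn}x)\to\|f_j-g_j\|_1$) gives convergence $\mu$-a.e.\ for arbitrary $f_j\in L^\infty(\mu)$.

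The remaining and main obstacle is to see that the \emph{hypothesis} on $Y$ — that the one-variable averages $\frac1N\sum_n\prod_j f'_j(S^{jn}y)$ converge $\nu$-a.e. — together with the structural formula for $F$ actually forces a.e.\ (not merely $L^2$) convergence on $X$. The natural route is: decompose each $f_j\in L^\infty(\mu)$ using the homogeneous-space structure $M=G/H$ by expanding in matrix coefficients of $G$ (Peter--Weyl on the compact group $G$), which reduces everything to functions of the form $f_j(y,gH)=h_j(y)\chi_j(g)$ with $h_j\in L^\infty(\nu)$ and $\chi_j$ a matrix coefficient; for such functions the $d$-fold average on $X$ factors, up to the cocycle contribution $\rho^{(n)}:=\rho(S^{n-1}y)\cdots\rho(y)$, into the $d$-fold average on $Y$ of the $h_j$'s (which converges a.e.\ by hypothesis) times an average over $n$ of $\chi_1(\rho^{(n)})\cdots$ which one must show converges a.e.\ in $y$. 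This last point is where the assumption $Z_{d-1}(X)=Z_{d-1}(Y)$ is used: it forces the relevant cocycle average to be, after passing to the characteristic factor, controlled by a nilsystem structure already present in $Y$, so its a.e.\ convergence is either trivial (the cocycle coboundary/eigenvalue case) or again reducible to the a.e.\ convergence hypothesis on $Y$. Carefully isolating this cocycle-convergence step — and checking that the Peter--Weyl expansion converges in a way compatible with a.e.\ (not just $L^2$) statements, which again is handled by the continuous-approximation argument above since matrix coefficients are continuous — is the technical heart of the proof.
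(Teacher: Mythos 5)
There is a genuine gap at the center of your argument. The step where you claim that, after passing to the $\langle\tau_d,\sigma_d\rangle$-strictly ergodic model, $(\overline{\O(x^d,\sigma_d)},\sigma_d)$ is uniquely ergodic with measure $\mu^{(d)}_x$ for $\mu$-a.e.\ $x$ is not provided by Theorem \ref{decomposition} nor by Theorem \ref{ziegler}: Theorem \ref{decomposition} only says that $\mu^{(d)}_x$ is ergodic under $\sigma_d$ and that the averages converge in $L^2$, while the Ziegler-type unique ergodicity of diagonal orbit closures is a theorem about nilsystems, i.e.\ it applies to $Z_{d-1}$, not to $X$. If that unique ergodicity statement held for a.e.\ $x$ in a model of an arbitrary ergodic system, the pointwise multiple ergodic theorem would follow for \emph{all} ergodic systems (this is essentially the paper's stated Conjecture) and the hypothesis on $Y$ would never be used; so this step is exactly the hard point, and invoking it is circular. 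Your fallback (Peter--Weyl expansion into $f_j(y,gH)=h_j(y)\chi_j(g)$) also does not close the gap: because $T^{jn}(y,gH)=(S^{jn}y,\rho^{(jn)}(y)gH)$, the average of $\prod_j h_j(S^{jn}y)\chi_j(\rho^{(jn)}(y)g)$ does not factor into an average of the $h_j$'s times an average of the $\chi_j$'s --- the cocycle couples $y$ and the fiber coordinate --- and the "technical heart" you point to is precisely left unproved. (Separately, your opening reduction to an ergodic group extension via Mackey theory would require knowing $Z_{d-1}$ of that group extension equals $Z_{d-1}(Y)$, which does not follow from $Z_{d-1}(X)=Z_{d-1}(Y)$.)

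For comparison, the paper's proof leverages the hypothesis on $Y$ differently and avoids any unique ergodicity claim on $X$: fix $x=(y,gH)$ in a full-measure set where the $Y$-averages converge (to $\nu^{(d)}_y$, by Theorem \ref{decomposition}) and where the identity of Lemma \ref{extension} holds for all continuous functions; take \emph{any} weak-$*$ limit $\lambda$ of $\frac1N\sum_{n<N}(\sigma_d)^n_*\d_{(x,\ldots,x)}$; the hypothesis forces $\pi^d_*\lambda=\nu^{(d)}_y$; then smooth $\lambda$ along the $G$-fibers by conjugation-invariant approximate identities $\psi_i$ on $G$, which produces measures $\lambda_{\psi_1,\ldots,\psi_d}$ that are $\sigma_d$-invariant (conjugation invariance lets one move $\rho^{(j)}(y_j)$ past $h_j$) and absolutely continuous with respect to $\mu^{(d)}_x$ by Lemma \ref{extension}; ergodicity of $\mu^{(d)}_x$ then gives $\lambda_{\psi_1,\ldots,\psi_d}=\mu^{(d)}_x$, and letting the kernels shrink yields $\lambda=\mu^{(d)}_x$, hence pointwise convergence for continuous $f_j$; finally $L^\infty$ functions are handled by approximating with continuous $g_j$ controlled via $\E(|f_j-g_j|\,|\,\I(X,T^j))$ and Birkhoff's theorem for each $T^j$. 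This weak-$*$ limit plus fiberwise smoothing argument is the missing idea in your proposal.
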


\begin{proof}
We may assume that $Y$ is a compact metric space.
By the assumption of the theorem, there is some measurable set $Y_0\in \Y$ with $\nu(Y_0)=1$ such that for $y\in Y_0$
and for all $f'_1,\ldots, f'_d\in C(Y)$
\begin{equation*}
    \frac 1 N\sum_{n=0}^{N-1}f'_1(T^ny)f'_2(T^{2n}y)\ldots f'_d(T^{dn}y)
\end{equation*}
converge since $C(Y)$ is separable. By Theorem D, we may assume that for all $y\in Y_0$ and for all $f'_1,\ldots, f'_d\in C(Y)$,
\begin{equation*}
    \frac 1 N\sum_{n=0}^{N-1}f'_1(T^ny)f'_2(T^{2n}y)\ldots f'_d(T^{dn}y)\longrightarrow \int_{Y^d} f'_1(y_1)f'_2(y_2)\ldots f'_{d}(y_{d})\
   d \nu^{(d)}_y(y_1,y_2,\ldots,y_{d})
\end{equation*}
as $N \to \infty$ since the almost everywhere limit coincides with the limit in $L^2$.

Let $X_0=\{(y,gH): y\in Y_0, g\in G\}$. Then $\mu(X_0)=\mu(Y_0\times G/H)=1$.
Since $C(X)$ is a separable space, by Lemma \ref{extension}, there is measurable set $X_1$ such that
$\mu(X_1)=1$ and (\ref{xx}) holds for all continuous functions. In Proposition \ref{Les-d}, we take $T_1=T,\ldots,T_d=T^d$
and let $X_*$ be the set defined there. 

\medskip

Now fix $x=(y,gH)\in X_0\cap X_1\cap X_*$. Let $\lambda$ be a weak$^*$ limit point of the sequence
$$\big\{\frac {1}{N} \sum_{n=0}^{N-1} (T\times T^2\times \ldots \times T^d)^n_* \d_{(x,x,\ldots,x)}\big\}_N$$ in $M(X^d)$.
By Proposition \ref{Les-d}, $\lambda$ is $T\times \ldots \times T^d$-invariant. We are going to show that $\lambda=\mu^{(d)}_x$.

\medskip


Let $\pi^{d}: X^d\rightarrow Y^{d}, (x_1,\ldots, x_d)\mapsto (\pi(x_1),\ldots,\pi(x_d))$.
Then $\pi^{d}_*\lambda$ is a weak$^*$ limit point of the sequence
$$\big\{\frac {1}{N} \sum_{n=0}^{N-1} (S\times S^2\times \ldots \times S^d)^n_* \d_{(y,y,\ldots,y)}\big\}_N$$ in $M(Y^d)$.
By the assumption, we know
$$\lim_{N\to \infty}\frac {1}{N} \sum_{n=0}^{N-1} (S\times S^2\times \ldots \times S^d)^n_* \d_{(y,y,\ldots,y)}=\nu^{(d)}_y.$$
Thus $\pi^{d}_*\lambda=\nu^{(d)}_y$.

Let $\psi_1,\ldots, \psi_d\in C(G)$ such that $\psi_i\ge 0$ and $\int_G \psi_i d m=1$
for all $i\in \{1,\ldots, d\}$.
Now define a new measure $\lambda_{\psi_1,\ldots, \psi_d}$ as follows:
\begin{equation}\label{}
\begin{split}
& \lambda_{\psi_1,\ldots, \psi_d}(f_1\otimes \ldots \otimes f_d)  \\
= & \int_{G^d\times X^d} f_1(y_1, h_1g_1H)\ldots f_d(y_d, h_d g_dH)\psi_1(h_1)\ldots
\psi_d(h_d)\ d h_1\ldots d h_d d \lambda(x_1,\ldots,x_d),
\end{split}
\end{equation}
where $x_i=(y_i,g_iH)$ and $f_i\in C(X)$ for all $i\in \{1,2, \ldots, d\}$.

Then by $\pi^{(d)}_*\lambda=\nu^{(d)}_y$, (\ref{xx}) and $\E(|f_i||\Y)(y_i)=\int_G |f_j(y_j, h_iH)| d h_i$, $1\le j\le d$  we have
\begin{equation*}
\begin{split}
& |\lambda_{\psi_1,\ldots, \psi_d}(f_1\otimes \ldots \otimes f_d) | \\
 \le & \prod_{i=1}^d \sup_{g\in G} |\psi_i(g)| \int_{G^d\times X^d} |f_1(y_1, h_1g_1H)
 \ldots f_d(y_d, h_d g_dH)| d h_1\ldots d h_d d \lambda(x_1,\ldots,x_d)\\
  = & \prod_{i=1}^d \sup_{g\in G} |\psi_i(g)| \int_{G^d\times Y^d} |f_1(y_1, h_1H)
  \ldots f_d(y_d, h_d H)| d h_1\ldots d h_d d \nu^{(d)}_y (y_1,\ldots,y_d) \\
  =&\ \prod_{i=1}^d \sup_{g\in G} |\psi_i(g)|  \int_{Y^d} \E (|f_1||\Y)(y_1)\E(|f_2||\Y)(y_2)\ldots \E(|f_d||\Y)(y_d)\
   d \nu^{(d)}_y(y_1,y_2,\ldots,y_{d}) \\
  =& \prod_{i=1}^d \sup_{g\in G} |\psi_i(g)| \int_{X^d}
  |f_1(x_1)\ldots f_d(x_d)| d \mu^{(d)}_x (x_1,\ldots,x_d).
\end{split}
\end{equation*}
Thus we have
\begin{equation}\label{}
    \lambda_{\psi_1,\ldots, \psi_d} \ll \mu^{(d)}_x.
\end{equation}

Note $\rho: Y\rightarrow G$ is a cocycle. For each $n\in \N$, let
$$\rho^{(n)}(y)=\rho(S^{n-1}y)\rho(S^{n-2}y)\ldots \rho(y).$$
Then we have $$T^n(y,gH)=(S^n, \rho^{(n)}(y)gH).$$

Now in addition we assume that $\{\psi_i\}$ satisfy $\psi_i(h^{-1}gh)=\psi_i(g)$ for all $1\le i\le d$.
Then for $f_1,\ldots,f_d\in C(X)$, we have
\begin{equation*}
\begin{split}
& \lambda_{\psi_1,\ldots, \psi_d}((f_1\otimes \ldots \otimes f_d)\circ T\times \ldots \times T^d) \\
  = & \int_{G^d\times X^d} f_1(Sy_1, \rho(y_1)h_1g_1H)\ldots f_d(S^dy_d,
  \rho^{(d)}(y_d)h_d g_dH) \psi_1(h_1)\ldots \psi_d(h_d) d h_1\ldots d h_d d \lambda(x_1,\ldots,x_d)\\
  = & \int_{G^d\times X^d} f_1(Sy_1, h_1\rho(y_1)g_1H)\ldots f_d(S^dy_d,
  h_d\rho^{(d)}(y_d) g_dH) \psi_1(h_1)\ldots \psi_d(h_d) d h_1\ldots d h_d d
  \lambda(x_1,\ldots,x_d)\\
  = & \int_{G^d\times X^d} f_1(y_1, h_1g_1H)\ldots f_d(y_d, h_d g_dH)
  \psi_1(h_1)\ldots \psi_d(h_d)\ d h_1\ldots d h_d d \lambda(x_1,\ldots,x_d)\\
  = & \lambda_{\psi_1,\ldots, \psi_d}(f_1\otimes \ldots \otimes f_d) .
\end{split}
\end{equation*}
That is, $ \lambda_{\psi_1,\ldots, \psi_d}$ is $T\times \ldots \times T^d$-invariant. Since $\mu^{(d)}_x$
is ergodic, we have that
$$ \lambda_{\psi_1,\ldots, \psi_d}=\mu^{(d)}_x.$$

Now we will define a sequence $\{\phi_n\}_n$ such that all $\phi_n$ satisfies
the properties which $\psi_i$ hold above, and $$\lambda_{\phi_n,\ldots, \phi_n}\to \lambda,\ n\to \infty .$$
Then we get that $\lambda=\mu^{(d)}_x$.

Since $G$ is a compact metric group, there is an invariant metric $\varrho$. For all $n\in \N$, let
$$\varphi_n(g)= 1/n -\inf \{ 1/n, \varrho(e, g)\}.$$
Then let
$$\phi_n=\frac{\varphi_n}{\int_G\varphi_n d m}.$$
Note that $\phi_n$ is supported on $A_n=\{g\in G: \varrho(e,g)<\frac{1}{n}\}$. It follows that
for given $y_1,\ldots, y_d$, $f_1(y_1, h_1g_1H)\ldots f_d(y_d, h_d g_dH)$ is close to $f_1(y_1, g_1H)\ldots f_d(y_d, g_dH)$
uniformly on $A_n$. Then using the fact that $\int_G \phi_n d m=1$  we deduce that $\phi_n$ is what we need.

\medskip

To sum up, we have proved that for all $x\in X_0\cap X_1\cap X_*$, $\mu^{(d)}_x$
is the unique weak$^*$ limit point of sequence
$$\big\{\frac {1}{N} \sum_{n=0}^{N-1} (T\times T^2\times \ldots \times T^d)^n_* \d_{(x,x,\ldots,x)}\big\}_N$$ in $M(X^d)$.
Hence for all $x\in X_0\cap X_1\cap X_*$, we have that for all $f_1,\ldots, f_d\in
C(X)$
\begin{equation}\label{s4}
\begin{split}
\frac{1}{N} \sum_{n=0}^{N-1}
   &f_1(T^nx)f_2(T^{2n}x)\ldots f_d(T^{dn}x)\\
   \longrightarrow & \int_{X^d} f_1(x_1)f_2(x_2)\ldots f_{d}(x_{d})\
   d \mu^{(d)}_x(x_1,x_2,\ldots,x_{d})
\end{split}
\end{equation}
as $N\to \infty$. Note that $\mu(X_0\cap X_1\cap X_*)=1$.

\medskip

Now by the same approximation argument as in the proof of Theorem B, we have that for all $f_1,\ldots, f_d\in L^\infty(\mu)$,
\begin{equation*}
    \frac 1 N\sum_{n=0}^{N-1}f_1(T^nx)f_2(T^{2n}x)\ldots f_d(T^{dn}x)
\end{equation*}
converges $\mu$ a.e.. The proof is completed.
\end{proof}

\subsection{Proof of Theorem C}
In this final subsection we will prove Theorem C. We start with the definition of a distal system.

\begin{de}
Let $\pi: (X,X,\mu,T)\rightarrow (Y,\Y,\nu,T)$ be a factor between two ergodic systems.
We call the extension $\pi$ a {\em distal extension} if there exists a countable ordinal $\eta$
and a directed family of factors $(X_\theta, \X_\theta, \mu_\theta, T), \theta \le \eta$ such that
\begin{enumerate}
  \item $X_0=Y$ and $X_\eta=X$.
  \item For $\theta<\eta$ the extension $\pi_\theta : X_{\theta+1}\rightarrow X_\theta$ is isometric and non-trivial (i.e. not an isomorphism).
  \item For a limit ordinal $\lambda \le \eta$, $X_\lambda={\lim\limits_{\longleftarrow}}_{\theta<\lambda} X_\theta$ (i.e. $\X_\lambda =\bigvee X_\theta$).
\end{enumerate}

If $X$ is a distal extension of the trivial system, then $(X,\X,\mu,T)$ is called a {\em distal system}.
\end{de}

\medskip

\noindent {\bf The proof of Theorem C:} We say a system $(X,\X,\mu, T)$ satisfies ($\divideontimes$), if for all $f_1, \ldots, f_d \in L^{\infty}(\mu)$
\begin{equation*}
    \frac 1 N\sum_{n=0}^{N-1}f_1(T^nx)\ldots f_d(T^{dn}x)
\end{equation*}
converge $\mu$ a.e.. The aim is to prove each distal system satisfies ($\divideontimes$). We will use the structure of distal systems and Proposition \ref{IsoExten} to complete the proof.

\medskip

Let $\pi_{d-1}: X\rightarrow Z_{d-1}$ be the factor map. Then $\pi_{d-1}$
is distal since $X$ is distal. By the definition of a distal extension,
there exists a countable ordinal $\eta$ and a directed family of factors
$(X_\theta, \X_\theta, \mu_\theta, T), \theta \le \eta$ such that
\begin{enumerate}
  \item $X_0=Z_{d-1}$ and $X_\eta=X$.
  \item For $\theta<\eta$ the extension $\pi_\theta : X_{\theta+1}\rightarrow X_\theta$ is non-trivial isometric.
  \item For a limit ordinal $\lambda \le \eta$, $X_\lambda={\lim\limits_{\longleftarrow}}_{\theta<\lambda} X_\theta$.
\end{enumerate}
\medskip

Then we have:

($i$) By Theorem \ref{ziegler}, $X_0=Z_{d-1}$ satisfies ($\divideontimes$).

\medskip

($ii$) For $\theta<\eta$ the extension $\pi_\theta : X_{\theta+1}\rightarrow X_\theta$ is
non-trivial isometric. If $X_\theta$ satisfies ($\divideontimes$),
then by Proposition \ref{IsoExten}, $X_{\theta+1}$ satisfies ($\divideontimes$).

\medskip

($iii$) For a limit ordinal $\lambda \le \eta$, if for all $\theta<\lambda$,
$X_\theta$ satisfies ($\divideontimes$), then it is easy to verify that the inverse
limit $X_\lambda={\lim\limits_{\longleftarrow}}_{\theta<\lambda} X_\theta$ also satisfies ($\divideontimes$).

\medskip
($iv$) By ($i$-$iii$), $X_\eta=X$ satisfies ($\divideontimes$).

The proof is completed.




\bigskip

\appendix

\section{Proof of Proposition \ref{Les-d}}


\begin{proof}[Proof of Proposition \ref{Les-d}]
Notice that when all transformations $T_i$ are continuous, the result follows by the standard argument and in this case $X_*=X$.

Now we deal with the general case. Since $X$ is a compact metric space, $C(X)$ is separable.
Let $C_1$ be a countable dense subset of $C(X)$. Let
$$C_2=\{|g_1\circ T_i-g_2|^d: 1\le i \le d,\ g_1,g_2\in C_1 \}.$$
It is obvious that $C_2$ is countable. Since $T_i$ has only finitely many ergodic components, it follows
that $\I_{T_i}$ is a finite $\sigma$-algebra and hence is generated by a finite measurable partition $\beta_i$
for each $1\le i\le d$. For a given $f\in C_2$, there
is some $X_f\in \X$ with $\mu(X_f)=1$ such that for all $x\in X_f$,  one has $\mu(\beta_i(x))>0$ and
\begin{equation}\label{Les1}
    \lim_{N\to \infty}\frac{1}{N}\sum_{n=0}^{N-1} f(T_i^nx)=\E(f| \I_{T_i})(x)
    =\frac{\int_{\beta_i(x)} f(y) d\mu(y)}{\mu(\beta_i(x))},\ \  \forall \ 1\le i\le d,
\end{equation}
where $\beta_i(x)$ is the atom of $\beta_i$ containing $x$.
Let
$$X_*=\bigcap_{f\in C_2} X_f .$$
Since $C_2$ is countable, we conclude that $X_*\in \X$ and $\mu(X_*)=1$.

Recall that the topology of $C(X)$ is the uniform convergence topology. Let
$$ C_3=\{|g_1\circ T_i-g_2|^d: 1\le i \le d,\ g_1,g_2\in C(X) \}.$$
Then each element of $C_3$ is the uniform limit of elements of $C_2$. It is easy to
show that $(\ref{Les1})$ holds for all $x\in X_*$ and for all $f\in C_3$.

\medskip

Let $x_0\in X_*$ and let $\lambda$ be a weak limit point of the sequence
$$\big\{\frac {1}{N} \sum_{n=0}^{N-1} (T_1\times T_2\times \ldots \times T_d)^n_* \d_{(x_0,x_0,\ldots,x_0)}\big\}_N.$$
Now we show $\lambda$ is $T_1\times \ldots \times T_d$-invariant.

For all $f_1,\ldots,f_d\in C(X)$, we have that
$$\Big|\frac{1}{N}\sum_{n=0}^{N-1}\prod_{i=1}^df_i(T^n_ix_0)\Big |\le \prod_{i=1}^d\Big(\frac{1}{N}
\sum_{n=0}^{N-1}|f_i|^d(T^n_ix_0)\Big)^{1/d}.$$
Since for each $1\le i\le d$, $|f_i|^d\in C_3$, by $(\ref{Les1})$ we have
\begin{equation*}
    \limsup_{N\to \infty} \Big|\frac{1}{N}\sum_{n=0}^{N-1}\prod_{i=1}^df_i(T^n_ix_0)\Big |\le
    \prod_{i=1}^d\Big(\E(|f_i|^d|\I_{T_i})(x_0)\Big)^{1/d}.
\end{equation*}
In particular, we deduce that
\begin{equation}\label{Les2}
    \lambda(f_1\otimes\ldots\otimes f_d)\le \prod_{i=1}^d\Big(\E(|f_i|^d|\I_{T_i})(x_0)\Big)^{1/d}.
\end{equation}

Now we show that $\lambda(f_1\circ T_1\otimes \ldots \otimes f_d\circ T_d)=\lambda(f_1\otimes \ldots \otimes f_d)$
for all $f_1,\ldots,f_d\in C(X)$. Let $M>0$ such that $\|f_i\|_\infty\le M, \ 1\le i\le d$.
Let $\delta=\min_{1\le i\le d}\{ \mu(\beta_i(x_0))\}$. Then $\delta>0$, and hence for any $\ep>0$, one can choose
functions $g_i\in C(X), \ 1\le i\le d$ such that
\begin{equation*}
    \|f_i\circ T_i-g_i \|_{L^d(\mu)}<\ep \delta^{1/d} \text{ and }\ \|g_i\|_\infty\le M, \ \ 1\le i\le d.
\end{equation*}
Thus
\begin{equation}\label{Les3}
    \Big(\E(|f_i\circ T_i-g_i |^d\big|\I_{T_i})(x_0)\Big)^{1/d}=\Big(\frac{\int_{\beta_i(x_0)}
     |f_i\circ T_i-g_i |^d(y) d\mu(y)}{\mu(\beta_i(x_0))}\Big)^{1/d}
    <\ep
\end{equation}
for $1\le i\le d$. By Lemma \ref{lem-product} and $(\ref{Les2})$,
\begin{equation}\label{Les4}
\begin{split}
& |\lambda(f_1\circ T_1\otimes \ldots \otimes f_d\circ T_d)-\lambda(g_1\otimes \ldots \otimes g_d)| \\
\le &\sum_{i=1}^d \big| \lambda(\bigotimes_{j=1}^{i-1} f_j\circ T_j \otimes (f_i\circ T_i-g_i)\otimes\bigotimes_{k=i+1}^d g_k)\big|\\
\le & \sum_{i=1}^d M^{d-1}\Big(\E(|f_i\circ T_i-g_i|^d\big|\I_{T_i})(x_0)\Big)^{1/d}\le dM^{d-1}\ep.
\end{split}
\end{equation}
Also we have
\begin{equation}\label{Les5}
\begin{split}
& \Big|\frac{1}{N}\sum_{n=0}^{N-1}\prod_{i=1}^df_i(T^n_ix_0) -\frac{1}{N}\sum_{n=0}^{N-1}\prod_{i=1}^dg_i(T^n_ix_0)\Big| \\
\le & \frac{1}{N}\Big|\prod_{i=1}^df_i(x_0)-\prod_{i=1}^df_i(T_i^Nx_0)\Big|+\Big|\frac{1}{N}\sum_{n=0}^{N-1}\Big[
\prod_{i=1}^df_i(T^{n+1}_ix_0) -\prod_{i=1}^d g_i(T^n_ix_0)\Big]\Big|\\
\le &\frac{2M^d}{N}+\sum_{i=1}^d \Big| \frac{1}{N}\sum_{n=0}^{N-1}\big(\prod_{j=1}^{i-1}f_j(T^{n+1}_jx_0)\big)(f_i(T_i^{n+1}x_0)-g_i(T^n_ix_0))\big(
\prod_{k=i+1}^dg_k(T^n_kx_0)\big)\Big|\\
\le & \frac{2M^d}{N}+\sum_{i=1}^d M^{d-1} \Big(\frac{1}{N}\sum_{n=0}^{N-1} \big|f_i\circ T_i-g_i\big|^d(T^n_ix_0)\Big)^{1/d}.
\end{split}
\end{equation}
By $(\ref{Les1})$ and $(\ref{Les3})$, it follows that
\begin{equation}\label{Les6}
\begin{split}
& \limsup_{N\to \infty} \Big|\frac{1}{N}\sum_{n=0}^{N-1}\prod_{i=1}^df_i(T^n_ix_0) -\frac{1}{N}\sum_{n=0}^{N-1}\prod_{i=1}^dg_i(T^n_ix_0)\Big| \\
\le & \sum_{i=1}^d M^{d-1} \Big(\E\big(|f_i\circ T_i-g_i\big|^d\big|\I_{T_i}\big)(x_0)\Big)^{1/d}\le dM^{d-1}\ep .
\end{split}
\end{equation}
In particular,
\begin{equation}\label{Les7}
    \Big| \lambda(f_1\otimes \ldots \otimes f_d)-\lambda(g_1\otimes \ldots \otimes g_d) \Big|\le dM^{d-1}\ep.
\end{equation}
By $(\ref{Les4})$ and $(\ref{Les7})$, we conclude that
\begin{equation}\label{Les8}
    \Big|\lambda(f_1\circ T_1\otimes \ldots \otimes f_d\circ T_d)- \lambda(f_1\otimes \ldots \otimes f_d) \Big|\le 2dM^{d-1}\ep.
\end{equation}
Since $\ep$ is arbitrary, the above inequality implies that
$$\lambda(f_1\circ T_1\otimes \ldots \otimes f_d\circ T_d)=\lambda(f_1\otimes \ldots \otimes f_d)$$
for all $f_1,\ldots, f_d\in C(X)$. Since the linear span of $C(X)^d$ is dense in $C(X^d)$, we have
$$\lambda (F\circ (T_1\times \ldots \times T_d))=\lambda (F)$$
for all $F\in C(X^d)$. That is, $\lambda$ is $T_1\times \ldots \times T_d$-invariant.
\end{proof}


\end{document}